\documentclass{article}
\usepackage{graphicx}
\usepackage{comment}
\usepackage{amsmath}
\usepackage{amsthm}
\usepackage{amssymb}
\usepackage[colorlinks=true,linkcolor=blue,citecolor=green]{hyperref}
\usepackage[margin=3cm]{geometry}
\usepackage{stmaryrd}
\usepackage{bussproofs}
\usepackage{mathtools}
\usepackage[all]{xy}
\usepackage{todonotes}
\usepackage{forest}
\forestset{smullyan tableaux/.style={for tree={math content},where n children=1{!1.before computing xy={l=\baselineskip},!1.no edge}{},closed/.style={label=below:$\times$},},}
\newcommand{\weakrightarrow}{\rightarrowtriangle} 
\newcommand{\weakcoimplies}{\multimap} 

\newcommand{\Ltriangle}{\mathcal{L}_\triangle}
\newcommand{\Ltrianglesquare}{\mathcal{L}^\neg_\triangle}
\newcommand{\NNF}{\mathsf{NNF}}
\newcommand{\Gtriangle}{\mathsf{G}\triangle}
\newcommand{\HGtriangle}{\mathcal{H}\Gtriangle}
\newcommand{\HGsquare}{\mathcal{H}\Gsquare}
\newcommand{\Prop}{\mathtt{Prop}}

\newcommand{\coimplies}{\Yleft}

\newcommand{\Gsquare}{\mathsf{G}^2}
\newcommand{\KGsquare}{\mathbf{K}\mathsf{G}^2}
\newcommand{\biG}{\mathsf{biG}}
\newcommand{\KbiG}{\mathbf{K}\mathsf{biG}}

\newcommand{\KG}{\mathfrak{GK}}
\newcommand{\HKbiG}{\mathcal{H}\KbiG}
\newcommand{\Sfconst}{\mathsf{Sf}^{\mathbf{0},\mathbf{1}}}

\newcommand{\Gsquareorder}{\mathsf{G}^2(\rightarrow,\coimplies)}
\newcommand{\HKGsquare}{\mathcal{H}\KGsquare}
\newcommand{\GsquareNelson}{\mathsf{G}^2(\weakrightarrow,\weakcoimplies)}
\newcommand{\Th}{\mathsf{Th}}
\newcommand{\MBox}{\mathsf{M}_{\Box}}
\newcommand{\HKGc}{\mathcal{H}\KG^c}
\newcommand{\HKG}{\mathcal{H}\KG}

\newcommand{\bimodalLtriangle}{\mathcal{L}_{\triangle,\Box,\lozenge}}
\newcommand{\bimodalLtrianglesquare}{\mathcal{L}^\neg_{\triangle,\Box,\lozenge}}

\newtheorem{theorem}{Theorem}[section]
\newtheorem{proposition}{Proposition}[section]

\newtheorem{lemma}{Lemma}[section]
\theoremstyle{definition}
\newtheorem{definition}{Definition}[section]
\theoremstyle{remark}
\newtheorem{remark}{Remark}[section]
\newtheorem{example}{Example}[section]
\newtheorem{convention}{Convention}[section]

\usepackage{longtable}
\usepackage{fancyhdr}
\pagestyle{fancy}
\usepackage{authblk}
\usepackage{soul}
\begin{document}
\providecommand{\keywords}[1]
{
  \small	
  \textbf{\textit{Keywords: }} #1
}
\title{Crisp bi-G\"{o}del modal logic and its paraconsistent expansion\thanks{The research of Marta B\'ilkov\'a was supported by the grant 22-01137S of the Czech Science Foundation. The research of Sabine Frittella and Daniil Kozhemiachenko was funded by the grant ANR JCJC 2019, project PRELAP (ANR-19-CE48-0006).\\We also thank the reviewer for their comments that greatly enhanced the quality of the paper.}}
\author[1]{Marta B\'{\i}lkov\'{a}}
\author[2]{Sabine Frittella}
\author[2]{Daniil Kozhemiachenko (corresponding author)}
\affil[1]{The Czech Academy of Sciences, Institute of Computer Science, Czech Republic}
\affil[2]{INSA Centre Val de Loire, Univ.\ Orl\'{e}ans, LIFO EA 4022, France}
\maketitle
\begin{abstract}
In this paper, we provide a Hilbert-style axiomatisation for the crisp bi-G\"{o}del modal logic $\KbiG$. We prove its completeness w.r.t.\ crisp Kripke models where formulas at each state are evaluated over the standard bi-G\"{o}del algebra on $[0,1]$. We also consider a paraconsistent expansion of $\KbiG$ with a De Morgan negation $\neg$ which we dub $\KGsquare$. We devise a Hilbert-style calculus for this logic and,  as a~con\-se\-quence of a~conservative translation from $\KbiG$ to $\KGsquare$, prove its completeness w.r.t.\ crisp Kripke models with two valuations over $[0,1]$ connected via $\neg$.

For these two logics, we establish that their decidability and validity are $\mathsf{PSPACE}$-complete.

We also study the semantical properties of $\KbiG$ and $\KGsquare$. In particular, we show that Glivenko's theorem holds only in finitely branching frames. We also explore the classes of formulas that define the same classes of frames both in $\mathbf{K}$ (the classical modal logic) and the crisp G\"{o}del modal logic $\KG^c$. We show that, among others, all Sahlqvist formulas and all formulas $\phi\rightarrow\chi$ where $\phi$ and $\chi$ are monotone, define the same classes of frames in $\mathbf{K}$ and $\KG^c$.

\keywords{paraconsistent logics; G\"{o}del modal logic; correspondence theory; axiomatic systems; complexity}
\end{abstract}
\section{Introduction}
The present paper is conceived as a natural continuation of the project commenced in~\cite{BilkovaFrittellaMajerNazari2020} and continued in~\cite{BilkovaFrittellaKozhemiachenko2021} and then in~\cite{BilkovaFrittellaKozhemiachenko2022IJCAR}. In the project, we aim at devising logics that formalise reasoning with inconsistent, incomplete and (or) uncertain information.

In the last paper, we provided two logics --- the bi-G\"{o}del modal logic $\KbiG$ (and its fuzzy version $\KbiG^\mathsf{f}$) and $\KGsquare$ --- its paraconsistent expansion with a De Morgan negation $\neg$. We also studied both logics whose validity was restricted to finitely branching frames and argued for their utility in the representation of agents' beliefs. In this paper, we aim at the study of the logics not restricted to the finitely branching frames.

\paragraph{G\"{o}del modal logics}
Conceptually, this paper has two sources of inspiration. First of all, we expand on the existing research on G\"{o}del modal logics as provided in multiple papers~\cite{CaicedoRodriguez2010,CaicedoMetcalfeRodriguezRogger2013,CaicedoRodriguez2015,CaicedoMetcalfeRodriguezRogger2017,RodriguezVidal2021}. G\"{o}del modal logics are well-researched: their complete axiomatisations (both over fuzzy and crisp Kripke frames) are established; their mono- and bi-modal fragments are shown to be decidable and, in fact, $\mathsf{PSPACE}$-complete; it is also known that they are strictly more expressive than the classical modal logic~$\mathbf{K}$.

On the other hand, the modal logics expanding the G\"{o}del logic with coimplication $\coimplies$ or the Baaz Delta $\triangle$\footnote{Note that $\coimplies$ and $\triangle$ are interdefinable:
$\triangle p\coloneqq\mathbf{1}\coimplies(\mathbf{1}\coimplies p)$ and $p\coimplies q\coloneqq p\wedge{\sim}\triangle(p\rightarrow q)$.} (also known as bi-G\"{o}del logic or symmetric G\"{o}del logic, $\biG$) have remained relatively unstudied. To the best of our knowledge, there are only two papers studying modal expansions of $\biG$. First is~\cite{GrigoliaKiseliovaOdisharia2016} where an algebraic semantics for the provability bi-G\"{o}del (symmetric G\"{o}del, in the authors' terminology) logic is studied. The other text is~\cite{AguileraDieguezFernandez-DuqueMcLean2022} where a linear temporal logic expanding $\biG$ is explored.

Furthermore, while it is well established that every class of frames classically definable by some formula $\phi(\overrightarrow{p_i})$ is also $\KG$-definable with $\phi(\overrightarrow{p_i}/\overrightarrow{{\sim\sim}p_i})$, there are examples of formulas that define the same class of frames both in $\mathbf{K}$ and the crisp G\"{o}del modal logic $\KG^c$: $\Box p\rightarrow p$ and $p\rightarrow\lozenge p$ define reflexive frames; $\lozenge\mathbf{1}$ defines serial frames, etc. However, there has been no systematic study which formulas can be ‘transferred’ in this manner from $\mathbf{K}$ to $\KG^c$. 

\paragraph{Paraconsistent expansions of the bi-intuitionistic logic}
The second source of inspiration is the study of the expansions of (super-)intuitionistic logics with the strong or constructive De Morgan negation as proposed in~\cite{Wansing2008}. In that paper, several constructive De Morgan negations for the bi-intuitionistic logic were studied: in particular, the Nelson negation that was initially proposed in~\cite{Nelson1949} and defined for the implication as $\neg(p\rightarrow q)\coloneqq p\wedge\neg q$, and the negation of the logic dubbed $\mathsf{I}_4\mathsf{C}_4$ by Wansing where $\neg(p\rightarrow q)$ is defined as $\neg q\coimplies\neg p$ and $\neg(p\coimplies q)\coloneqq\neg q\rightarrow\neg p$. The latter logic, in fact, was introduced several times\footnote{We are grateful to Heinrich Wansing for pointing this out to us.}: first by Moisil~\cite{Moisil1942} as symmetric propositional calculus, then by Wansing~\cite{Wansing2008} as $\mathsf{I}_4\mathsf{C}_4$, and then by Leitgeb~\cite{Leitgeb2019} as HYPE. Cf.~\cite{OdintsovWansing2021} for a recent and more detailed discussion. In~\cite{Wansing2008}, Nelson's logic with coimplication and $\mathsf{I}_4\mathsf{C}_4$ are equipped with frame semantics on bi-intuitionistic frames with two \emph{independent} valuations $e^+$ and $e^-$ that are interpreted as support of truth and support of falsity. The valuations are connected via the strong De Morgan negation in the following sense: support of falsity of $\phi$ is defined as support of truth of $\neg\phi$ and vice versa.

In~\cite{BilkovaFrittellaKozhemiachenko2021}, we discussed two paraconsistent logics collectively dubbed $\Gsquare$ expanding G\"{o}del logic with a De Morgan negation which were, in fact, pre-linear extensions of Nelson's logic $\mathsf{N4}$ and $\mathsf{I}_4\mathsf{C}_4$. We also provided them with algebraic semantics over the algebra $[0,1]^{\Join}$ --- defined on the twist product of the lattice $[0,1]$ with itself --- thus linking them to other paraconsistent fuzzy logics such as the ones in~\cite{ErtolaEstevaFlaminioGodoNoguera2015}. In~\cite{BilkovaFrittellaKozhemiachenkoMajer2022IJAR}, we applied $\Gsquareorder$ (the linear expansion of Moisil's logic) and $\GsquareNelson$ (the linear expansion of $\mathsf{N4}$) presented via Hilbert-style axiomatisations to study qualitative reasoning under uncertainty.

\paragraph{Logics}
In this paper, we will be discussing several logics obtained from the propositional G\"{o}del logic $\mathsf{G}$. Our main interest lies in the logics we denote $\KbiG$ and $\KGsquare$. They can be produced from $\mathsf{G}$ in several ways: (1) adding De Morgan negation $\neg$ to obtain $\Gsquare$ (in which case $\phi\coimplies\phi'$ can be defined as $\neg(\neg\phi'\rightarrow\neg\phi)$) and then further expanding the language with $\Box$ or $\lozenge$; (2) adding $\coimplies$ or $\triangle$ to $\mathsf{G}$, then both $\Box$ and $\lozenge$ thus acquiring $\KbiG$ (modal bi-G\"{o}del logic) which is further enriched with $\neg$. The reader may see these relations in Fig.~\ref{fig:logics}.
\begin{figure}
\centering
\[\xymatrix{
{\KbiG}^\mathsf{f}&&\KGsquare\\
\KbiG\ar[urr]|{\neg}\ar[u]^{\mathsf{ff}}&\KG\ar[ul]|{\coimplies/\triangle}&\\
\mathsf{biG}\ar[u]|{\Box,\lozenge}&\KG^c\ar[uur]|{\neg}\ar[u]^{\mathsf{ff}}\ar[ul]|{\coimplies/\triangle}&\Gsquare\ar[uu]|{\Box/\lozenge}\\
&\mathsf{G}\ar[ul]|{\coimplies/\triangle}\ar[u]|{\Box,\lozenge}\ar[ur]|{\neg}
}\]
\caption{Logics in the article. $\mathsf{ff}$ stands for ‘permitting fuzzy frames’. Subscripts on arrows denote language expansions. $/$ stands for ‘or’ and comma for ‘and’.}
\label{fig:logics}
\end{figure}

\paragraph{Plan of the paper}
In this paper, we bring together the two sources of inspiration and try to close the gaps outlined above. Namely, we axiomatise the modal bi-G\"{o}del logic over crisp frames $\KbiG$ in the language with $\triangle$ and its paraconsistent expansion $\KGsquare$. We study their semantical properties, establish their decidability, and provide complexity evaluations.

The remainder of the paper is structured as follows. In section~\ref{sec:preliminaries}, we provide the required logical preliminaries for this paper. We define semantics for fuzzy and crisp $\KbiG$ and for crisp $\KGsquare$ and establish some of their useful properties. We also discuss the contribution of $\triangle$ and $\coimplies$ to the expressivity of the $\KbiG$ language in comparison to $\KG$.

In section~\ref{sec:axiomatisation}, we define a~Hilbert-style calculus for crisp $\KbiG$ and establish its weak and strong completeness. Then, we show how to expand our system so as to obtain the complete axiomatisation of crisp $\KGsquare$. We also prove that in the presence of $\neg$, some axioms of $\KbiG$ become redundant in $\KGsquare$.

In section~\ref{sec:semantics}, we investigate the semantical properties of $\KG^c$, $\KbiG$, and $\KGsquare$. In particular, we study transferrable formulas, i.e., formulas classically and G\"{o}del valid on the same classes of frames. We also characterise the class of frames the logics of which allow Glivenko's theorem and its paraconsistent version.

In section~\ref{sec:complexity}, we tackle the decidability and complexity of $\KbiG$ and $\KGsquare$. Using the method of~\cite{CaicedoMetcalfeRodriguezRogger2017}, we prove $\mathsf{PSPACE}$ completeness of satisfiability and validity of $\KbiG$. As a corollary, we obtain $\mathsf{PSPACE}$ completeness of $\KGsquare$.

Finally, in section~\ref{sec:conclusion}, we recapitulate the results obtained in the paper and set the goals for future research.
\section{Preliminaries\label{sec:preliminaries}}
In this section, we provide the semantics of $\KbiG$ and $\KGsquare$ in terms of $[0,1]$-valued Kripke models. We also establish several properties that will help us in the next sections.
\subsection{Semantics of the propositional fragments\label{subsec:propositionalfragments}}
We begin with the semantics of the propositional fragment of $\KbiG$, namely, with $\biG$. The language is generated from the countable set $\Prop$ via the following grammar.
\begin{align*}
\phi&\coloneqq p\in\Prop\mid{\sim}\phi\mid\triangle\phi\mid(\phi\wedge\phi)\mid(\phi\vee\phi)\mid(\phi\rightarrow\phi)\tag{$\Ltriangle$}
\end{align*}
We also introduce two defined constants
\begin{align*}
\mathbf{1}&\coloneqq p\rightarrow p&
\mathbf{0}&\coloneqq{\sim}\mathbf{1}
\end{align*}
%
In our presentation, we choose $\triangle$ over $\coimplies$ as a primitive symbol because the former allows for a shorter and more elegant axiomatisation of the propositional fragment. Furthermore, the use of $\triangle$ simplifies the completeness proof of $\KbiG$. Recall once again the definitions of $\triangle$ and $\coimplies$ via one another.
\begin{align*}
\triangle p&\coloneqq\mathbf{1}\coimplies(\mathbf{1}\coimplies p)&p\coimplies q&\coloneqq p\wedge{\sim}\triangle(p\rightarrow q)
\end{align*}

The semantics of $\Ltriangle$ are given in the following definition. For the sake of simplicity, we also include $\coimplies$ in the definition of bi-G\"{o}del algebras. We remind our readers that we consider $\coimplies$ a defined connective. It will, however, simplify the presentation of the $\Gsquare$ semantics.
\begin{definition}
The bi-G\"{o}del algebra $[0,1]_{\mathsf{G}}=\langle[0,1],0,1,\wedge_\mathsf{G},\vee_\mathsf{G},\rightarrow_{\mathsf{G}},\coimplies,\sim_\mathsf{G},\triangle_\mathsf{G}\rangle$ is defined as follows: for all $a,b\in[0,1]$, the standard operations are given by $a\wedge_\mathsf{G}b\coloneqq\min(a,b)$, $a\vee_\mathsf{G}b\coloneqq\max(a,b)$,
\begin{align*}
a\rightarrow_{G}b&=
\begin{cases}
1,\text{ if }a\leq b\\
b\text{ else}
\end{cases}
&
a\coimplies_{G}b&=
\begin{cases}
0,\text{ if }a\leq b\\
a\text{ else}
\end{cases}
&
{\sim}a&=
\begin{cases}
0,\text{ if }a>0\\
1\text{ else}
\end{cases}
&
\triangle a&=
\begin{cases}
0,\text{ if }a<1\\
1\text{ else}
\end{cases}
\end{align*}
A \emph{$\biG$ valuation} is a homomorphism $e:\Ltriangle\rightarrow[0,1]_\mathsf{G}$ that is defined for the complex formulas as $e(\phi\circ\phi')=e(\phi)\circ_\mathsf{G}e(\phi')$ for every connective $\circ$. We say that $\phi$ is \emph{valid} iff $e(\phi)=1$ under every valuation. Moreover, $\Gamma\subseteq\Ltriangle$ \emph{entails} $\chi\in\Ltriangle$ ($\Gamma\models_{\biG}\chi$) iff for every valuation $e$, it holds that
\[\inf\{e(\phi):\phi\in\Gamma\}\leq e(\chi).\]
\end{definition}
\begin{remark}\label{rem:1preservationentailment}
Note that in contrast to G\"{o}del logic, $\models_{\biG}$ cannot be defined via the preservation of $1$. Indeed, it is easy to check that $e(p\wedge{\sim}\triangle p)<1$ for every $e$, whence an arbitrary formula would have followed from $p\wedge{\sim}\triangle p$. On the other hand, it is clear that $p\wedge{\sim}\triangle p\not\models_{\biG}q$ since if $e(p)=\frac{1}{2}$ and $e(q)=0$, we have that $e(p\wedge{\sim}\triangle p)>e(q)$.
\end{remark}

In order to obtain the paraconsistent expansion of $\biG$, we add $\neg$ to $\Ltriangle$. We dub the resulting language $\Ltrianglesquare$. The semantics of $\Gsquare$ is as follows.
\begin{definition}\label{def:Gsquaresemantics}
A $\Gsquare$ model is a tuple $\langle[0,1],e_1,e_2\rangle$ with $e_1,e_2:\Prop\rightarrow[0,1]$ being extended on the complex formulas as follows.
\begin{longtable}{rclrcl}
$e_1(\neg\phi)$&$=$&$e_2(\phi)$&$e_2(\neg\phi)$&$=$&$e_1(\phi)$\\
$e_1(\phi\wedge\phi')$&$=$&$e_1(\phi)\wedge_\mathsf{G}e_1(\phi')$&$e_2(\phi\wedge\phi')$&$=$&$e_2(\phi)\vee_\mathsf{G}e_2(\phi')$\\
$e_1(\phi\vee\phi')$&$=$&$e_1(\phi)\vee_\mathsf{G}e_1(\phi')$&$e_2(\phi\vee\phi')$&$=$&$e_2(\phi)\wedge_\mathsf{G}e_2(\phi')$\\
$e_1(\phi\rightarrow\phi')$&$=$&$e_1(\phi)\!\rightarrow_\mathsf{G}\!e_1(\phi')$&$e_2(\phi\rightarrow\phi')$&$=$&$e_2(\phi')\coimplies_\mathsf{G}e_2(\phi)$\\
$e_1({\sim}\phi)$&$=$&${\sim_\mathsf{G}}e_1(\phi)$&$e_2({\sim}\phi)$&$=$&$1\coimplies_\mathsf{G}e_2(\phi')$\\
$e_1(\triangle\phi)$&$=$&$\triangle_\mathsf{G}e_1(\phi)$&$e_2(\triangle\phi)$&$=$&${\sim_\mathsf{G}\sim_\mathsf{G}}e_2(\phi')$
\end{longtable}
$\phi\in\Ltrianglesquare$ is \emph{valid} iff for every $\Gsquare$ model, $e_1(\phi)=1$ and $e_2(\phi)=0$. $\Gamma\subseteq\Ltrianglesquare$ \emph{entails} $\chi\in\Ltrianglesquare$ ($\Gamma\models_{\Gsquare}\phi$) iff
\[\inf\{e_1(\phi):\phi\in\Gamma\}\leq e_1(\chi)\text{ and }\sup\{e_2(\phi):\phi\in\Gamma\}\geq e_2(\chi).\]
When there is no risk of confusion, we will write $e(\phi)=(x,y)$ as a shorthand for $e_1(\phi)=x$ and $e_2(\phi)=y$.
\end{definition}
Observe that $e_1$ and $e_2$ in the previous definition can be construed as support of truth and support of falsity of the given formula. Under this condition, we can intuitively say that in order for the entailment to be valid, the conclusion should be \emph{at least as true} and \emph{at most as false} as the premises.
\begin{convention}
To facilitate the presentation, we will introduce the following shorthands. Let $\phi,\phi'\in\Ltrianglesquare$, we set
\begin{align*}
e(\phi)\leq^\pm e(\phi')&\text{ iff }e_1(\phi)\leq e_1(\phi')\text{ and }e_2(\phi)\geq e_2(\phi')\\
e(\phi)<^\pm e(\phi')&\text{ iff }e(\phi)\leq^\pm e(\phi')\text{ and }e(\phi)\neq e(\phi')
\end{align*}
\end{convention}
The next statements are straightforward generalisations of the results in~\cite{Wansing2008}. First, we note that $\Gsquare$ has the $\neg$ negation normal form property.
\begin{proposition}\label{prop:GsquareNNF}
For every formula $\phi\in\Ltrianglesquare$ there is a formula $\NNF(\phi)$ s.t.\ all its $\neg$'s are applied to variables only, and for every $\Gsquare$ model, it holds that
\[e_1(\phi)=e_1(\NNF(\phi))\text{ and }e_2(\phi)=e_2(\NNF(\phi)).\]
\end{proposition}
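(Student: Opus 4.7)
I would define $\NNF$ by mutual structural recursion, producing at each step both $\NNF(\phi)$ and $\NNF(\neg\phi)$; this sidesteps the usual termination worry that the naive rewriting $\neg\sim\psi \leadsto \dots$ might generate more $\neg$'s than it eliminates, since on the recursive call we only ever invoke $\NNF$ on a proper subformula of the input. The base case is $\NNF(p)=p$ and $\NNF(\neg p)=\neg p$. For the positive cases, $\NNF$ distributes homomorphically, e.g.\ $\NNF(\psi\wedge\chi)=\NNF(\psi)\wedge\NNF(\chi)$ and similarly for the other connectives (including $\sim$ and $\triangle$ as unary).

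For the negated cases, I would adopt the following rewrites. The Boolean De Morgan rules
\[\NNF(\neg(\psi\wedge\chi))=\NNF(\neg\psi)\vee\NNF(\neg\chi), \qquad \NNF(\neg(\psi\vee\chi))=\NNF(\neg\psi)\wedge\NNF(\neg\chi)\]
handle the lattice connectives; for implication I would use $\coimplies$ as a defined abbreviation and put
\[\NNF(\neg(\psi\rightarrow\chi))=\NNF(\neg\chi)\coimplies\NNF(\neg\psi),\]
unfolding $\coimplies$ into $\wedge,\sim,\triangle,\rightarrow$ via $p\coimplies q\coloneqq p\wedge\sim\triangle(p\rightarrow q)$. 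For the two unary G\"odel connectives I would set
\[\NNF(\neg\sim\psi)=\sim\triangle\NNF(\neg\psi), \qquad \NNF(\neg\triangle\psi)=\sim\sim\NNF(\neg\psi),\]
and finally $\NNF(\neg\neg\psi)=\NNF(\psi)$.

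Correctness is then by induction on $\phi$: one checks that each rewrite preserves both $v_1$ and $v_2$, using the clauses of Definition~\ref{def:Gsquaresemantics} together with the induction hypothesis $v_i(\NNF(\neg\psi))=v_i(\neg\psi)$ for $i=1,2$. The Boolean and implication cases are direct applications of the semantic clauses; the two interesting clauses are the unary ones, where the verification reduces to the algebraic identities
\[\sim_{\mathsf{G}}\triangle_{\mathsf{G}}x=1\coimplies_{\mathsf{G}}x \quad\text{and}\quad 1\coimplies_{\mathsf{G}}(1\coimplies_{\mathsf{G}}x)=\triangle_{\mathsf{G}}x\]
holding in $[0,1]_{\mathsf{G}}$, each checked by splitting on $x=1$, $0<x<1$, and $x=0$.

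The main obstacle I expect is bookkeeping rather than mathematical difficulty: one must verify both $v_1$ and $v_2$ match for each of the roughly seven negated cases, and for the $\neg\sim$ and $\neg\triangle$ clauses the $v_2$-side collapses to a double nested $\coimplies_{\mathsf{G}}$ that needs to be simplified via the algebraic identities above. Once these identities are recorded as a preliminary lemma, the induction becomes routine case analysis.
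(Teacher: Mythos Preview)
Your proposal is correct and follows essentially the same approach as the paper: the paper simply lists the six biconditionals $\neg\neg\phi\leftrightarrow\phi$, $\neg(\phi\wedge\chi)\leftrightarrow\neg\phi\vee\neg\chi$, $\neg(\phi\vee\chi)\leftrightarrow\neg\phi\wedge\neg\chi$, $\neg(\phi\rightarrow\chi)\leftrightarrow\neg\chi\wedge{\sim}\triangle(\neg\chi\rightarrow\neg\phi)$, $\neg\triangle\phi\leftrightarrow{\sim\sim}\neg\phi$, $\neg{\sim}\phi\leftrightarrow{\sim}\triangle\neg\phi$, observes that $v(\phi\leftrightarrow\chi)=(1,0)$ iff $v(\phi)=v(\chi)$, and concludes that $\neg$ can be pushed through every connective. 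Your mutual-recursion formulation and explicit verification of the $[0,1]_{\mathsf{G}}$ identities is a more careful packaging of exactly this argument, making termination explicit where the paper leaves it implicit.
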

\begin{proof}
We introduce a shorthand $\phi\leftrightarrow\chi\coloneqq(\phi\rightarrow\chi)\wedge(\chi\rightarrow\phi)$. Observe that $e(\phi\leftrightarrow\chi)=(1,0)$ iff $e(\phi)=e(\chi)$. It is now easy to check that the following formulas are valid.
\begin{align*}
\neg\neg\phi\leftrightarrow\phi&&\neg(\phi\vee\chi)\leftrightarrow(\neg\phi\wedge\neg\chi)&&\neg(\phi\wedge\chi)\leftrightarrow(\neg\phi\vee\neg\chi)\\
\neg\triangle\phi\leftrightarrow{\sim\sim}\neg\phi&&
\neg{\sim}\phi\leftrightarrow{\sim}\triangle\neg\phi&&
\neg(\phi\rightarrow\chi)\leftrightarrow(\neg\chi\wedge{\sim}\triangle(\neg\chi\rightarrow\neg\phi))
\end{align*}
As this shows that $\neg$ can be pushed inside every other connective, the result follows.
\end{proof}
\begin{proposition}[{\cite[Corollary~1]{BilkovaFrittellaKozhemiachenko2021}}]\label{prop:+isenoughGsquare}
$\phi$ is valid iff $e_1(\phi)=1$ for any $\Gsquare$ model.
\end{proposition}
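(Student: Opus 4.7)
The $(\Rightarrow)$ direction is immediate from Definition~\ref{def:Gsquaresemantics}, so all the work lies in the converse. Assume $v_1(\phi) = 1$ in every $\Gsquare$ model; the plan is to derive $v_2(\phi) = 0$ in every model. The first step is to apply the $\neg$-clause of Definition~\ref{def:Gsquaresemantics} directly to get $v_2(\phi) = v_1(\neg\phi)$ in any model, and then Proposition~\ref{prop:GsquareNNF} to rewrite this as $v_1(\NNF(\neg\phi))$. So what really needs to be proved is: if $v_1$ is identically $1$ on $\phi$, then it is identically $0$ on $\NNF(\neg\phi)$.

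For any NNF formula $\psi$, $v_1^M(\psi)$ depends only on the values $v_1^M(p)$ (for positive literals $p$ of $\psi$) and $v_2^M(p)$ (for negative literals $\neg p$, via $v_1(\neg p)=v_2(p)$). Because $v_1$ and $v_2$ on atoms are independent, each NNF $\psi$ can be faithfully coded as a $\biG$-formula $\psi^\flat$ over a doubled vocabulary $\{p,p^\flat\}$ by substituting a fresh atom $p^\flat$ for each literal $\neg p$. The map sending a $\Gsquare$ model $M$ to the $\biG$-valuation $\tilde v(p)=v_1^M(p),\;\tilde v(p^\flat)=v_2^M(p)$ is a bijection between $\Gsquare$ models and $\biG$-valuations on this extended vocabulary, with $v_1^M(\psi)=\tilde v(\psi^\flat)$. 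Under this coding the statement becomes a purely $\biG$-level claim: $(\NNF(\phi))^\flat$ is $\biG$-valid implies $(\NNF(\neg\phi))^\flat$ is identically $0$.

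I would prove this $\biG$-claim by structural induction on $\phi$ in NNF, unfolding $\NNF(\neg\cdot)$ through the connectives using Proposition~\ref{prop:GsquareNNF} and invoking standard identities of the standard bi-G\"odel algebra $[0,1]_{\mathsf G}$, notably the collapse $A\wedge{\sim}\triangle(A\rightarrow B)=0$ whenever $A\leq B$, which is exactly what drives the NNF-rule for $\neg{\rightarrow}$. The literal cases are vacuous because no single atom is $\biG$-valid, and the conjunction case is routine since $\biG$-validity of $A\wedge B$ entails $\biG$-validity of both conjuncts, permitting an immediate appeal to the induction hypothesis via the De Morgan rule for $\NNF(\neg\cdot)$.

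The main obstacle I anticipate is the disjunction step: $\biG$-validity of $A^\flat\vee B^\flat$ does not reduce to validity of either disjunct, so the induction hypothesis is not directly applicable to the dual conjunction $(\NNF(\neg A))^\flat\wedge(\NNF(\neg B))^\flat$. I would handle this pointwise, noting that at any fixed valuation at least one of $A^\flat,B^\flat$ evaluates to $1$, and then arguing by case analysis on the outer connective of that disjunct together with the $\triangle$-tricks in the NNF-rules for $\neg{\rightarrow}$, $\neg{\sim}$, and $\neg\triangle$ that the corresponding conjunct of $(\NNF(\neg\phi))^\flat$ collapses to $0$ at that valuation. A cleaner alternative is to transfer the argument directly to the twist-product algebra $[0,1]^{\Join}$ of~\cite{BilkovaFrittellaKozhemiachenko2021}, where the present corollary drops out of elementary properties of the first-coordinate projection of the embedding underlying that paper's algebraic completeness result.
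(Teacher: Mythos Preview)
The paper does not give its own proof of this proposition; it is simply cited from~\cite{BilkovaFrittellaKozhemiachenko2021}. So there is nothing to compare against, and I will just assess your argument.

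Your main inductive route has a genuine gap at the disjunction step. You propose to argue pointwise: at a valuation where $A^\flat\vee B^\flat=1$, one disjunct (say $A^\flat$) equals~$1$, and you then want $(\NNF(\neg A))^\flat=0$ at that same valuation. But that pointwise implication is \emph{false}: unwinding the $\flat$-translation, it says that $v_1(A)=1$ forces $v_2(A)=0$ in a single $\Gsquare$ model, which already fails for a variable (take $v_1(p)=v_2(p)=1$). For a concrete instance inside a valid disjunction, consider $\phi=\triangle p\vee{\sim}\triangle p$: at the valuation with $v_1(p)=v_2(p)=1$ we have $(\triangle p)^\flat=1$ while $(\NNF(\neg\triangle p))^\flat={\sim\sim}p^\flat$ also equals~$1$. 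The conjunction $(\NNF(\neg\phi))^\flat$ does vanish there, but only because the \emph{other} conjunct is~$0$, not by the mechanism you describe. The ``case analysis on the outer connective together with the $\triangle$-tricks'' cannot repair this, since the obstruction already appears at connective depth~$1$.

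Your throwaway alternative is in fact the intended argument, and it is short enough to write out. The map $x\mapsto 1-x$ is an order-reversing involution on $[0,1]$ exchanging $\min\leftrightarrow\max$, $\rightarrow_G\leftrightarrow\coimplies_G$, ${\sim}_G\leftrightarrow(1\coimplies_G\cdot)$, and $\triangle_G\leftrightarrow{\sim}_G{\sim}_G$. Given a $\Gsquare$ model $M=\langle[0,1],v_1,v_2\rangle$, set $v_1^*(p)=1-v_2(p)$ and $v_2^*(p)=1-v_1(p)$; a straightforward induction on $\phi$ (using Definition~\ref{def:Gsquaresemantics} and the exchanges above) gives $v_1^{*}(\phi)=1-v_2(\phi)$ for every $\phi$. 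If $v_1(\phi)=1$ in all models, then in particular $v_1^{*}(\phi)=1$, whence $v_2(\phi)=0$ in the original model. No $\NNF$ and no connective-by-connective case analysis are needed.
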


The following statement is an immediate consequence of Proposition~\ref{prop:+isenoughGsquare}.
\begin{proposition}\label{prop:+translationGsquare}
Let $\phi$ be in $\NNF$ and denote with $\phi^+$ the result of the replacement of every negated variable $\neg p$ with a fresh variable $p^*$. Then $\phi$ is $\Gsquare$ valid iff $\phi^+$ is $\biG$ valid.
\end{proposition}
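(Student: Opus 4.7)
The plan is to invoke Proposition~\ref{prop:+isenoughGsquare} to reduce $\Gsquare$-validity of $\phi$ to the condition that $v_1(\phi)=1$ under every $\Gsquare$ model, and then set up a bijection between $\Gsquare$ models restricted to the variables of $\phi$ and $\biG$ valuations on the variables of $\phi^+$. Concretely, given a $\Gsquare$ model $\langle[0,1],v_1,v_2\rangle$, I define a $\biG$ valuation $w$ by $w(p)\coloneqq v_1(p)$ for each original variable $p$ occurring in $\phi$, and $w(p^*)\coloneqq v_2(p)$ for each fresh variable $p^*$ used in the translation. Conversely, given a $\biG$ valuation $w$, I define $v_1(p)\coloneqq w(p)$ and $v_2(p)\coloneqq w(p^*)$, extending arbitrarily on other propositional variables. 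At the level of the variables relevant for $\phi$ and $\phi^+$ this assignment is evidently bijective.

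The key step is then to prove, by induction on the NNF-structure of $\phi$, that $v_1(\phi)=w(\phi^+)$. The base cases are the two kinds of literals: for an unnegated variable $p$ we have $v_1(p)=w(p)=w(p^+)$; for a negated variable $\neg p$ the clause $v_1(\neg\phi)=v_2(\phi)$ from Definition~\ref{def:Gsquaresemantics} gives $v_1(\neg p)=v_2(p)=w(p^*)=w((\neg p)^+)$. For the inductive step, the NNF assumption guarantees that $\neg$ is never applied to a compound formula, so for each remaining connective $\circ\in\{\wedge,\vee,\rightarrow,{\sim},\triangle\}$ the clause defining $v_1(\phi\circ\psi)$ (or $v_1(\circ\phi)$) in Definition~\ref{def:Gsquaresemantics} is literally the same as the corresponding clause for a $\biG$ valuation, namely the standard G\"odel operation applied to the $v_1$-values of the immediate subformulas. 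The induction hypothesis then yields $v_1(\phi)=w(\phi^+)$ mechanically.

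Putting the pieces together, $\phi$ is $\Gsquare$-valid iff $v_1(\phi)=1$ in every $\Gsquare$ model (Proposition~\ref{prop:+isenoughGsquare}) iff, via the bijection above and the identity $v_1(\phi)=w(\phi^+)$, $w(\phi^+)=1$ for every $\biG$ valuation $w$, i.e.\ iff $\phi^+$ is $\biG$-valid. There is no real obstacle here: the only delicate point to highlight in the write-up is the role of the NNF assumption in the inductive step, since it ensures that the asymmetric clause $v_1(\neg\phi)=v_2(\phi)$ is only ever invoked at literals and we never have to track the $v_2$-semantics of a compound subformula simultaneously with its $v_1$-semantics.
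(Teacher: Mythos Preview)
Your proof is correct and follows exactly the approach the paper intends: the paper states this proposition as an immediate consequence of Proposition~\ref{prop:+isenoughGsquare} without spelling out the details, and the explicit argument it gives later for the modal analogue (Proposition~\ref{prop:+translation}) is precisely your bijection-plus-induction argument, defining $v^+(p)=v_1(p)$, $v^+(p^*)=v_2(p)$ and showing $v_1(\phi)=v^+(\phi^+)$ by induction on the $\NNF$-structure.
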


\subsection{Axiomatisation of the propositional fragment}
Let us now define the Hilbert-style calculi for $\biG$ and $\Gsquare$. First, we recall from~\cite{Baaz1996} the $\Ltriangle$ axiomatisation of $\biG$ which we call $\HGtriangle$.
\begin{definition}[$\HGtriangle$ --- Hilbert-style calculus for $\biG$]
The calculus has the following axiom schemas and rules (for any $\phi$, $\chi$,~$\psi$):
\begin{enumerate}
\item $(\phi\rightarrow\chi)\rightarrow((\chi\rightarrow\psi)\rightarrow(\phi\rightarrow\psi))$
\item $\phi\rightarrow(\phi\vee\chi)$; $\chi\rightarrow(\phi\vee\chi)$
\item $(\phi\rightarrow\psi)\rightarrow((\chi\rightarrow\psi)\rightarrow((\phi\vee\chi)\rightarrow\psi))$
\item $(\phi\wedge\chi)\rightarrow\phi$; $(\phi\wedge\chi)\rightarrow\chi$
\item $(\phi\rightarrow\chi)\rightarrow((\phi\rightarrow\psi)\rightarrow(\phi\rightarrow(\chi\wedge\psi)))$
\item $(\phi\rightarrow(\chi\rightarrow\psi))\rightarrow((\phi\wedge\chi)\rightarrow\psi)$; $((\phi\wedge\chi)\rightarrow\psi)\rightarrow(\phi\rightarrow(\chi\rightarrow\psi))$
\item $(\phi\rightarrow\chi)\rightarrow({\sim}\chi\rightarrow{\sim}\phi)$
\item $(\phi\rightarrow\chi)\vee(\chi\rightarrow\phi)$
\item $\triangle\phi\vee{\sim}\triangle\phi$
\item $\triangle(\phi\rightarrow\chi)\rightarrow(\triangle\phi\rightarrow\triangle\chi)$; $\triangle(\phi\vee\chi)\rightarrow(\triangle\phi\vee\triangle\chi)$
\item $\triangle\phi\rightarrow\phi$; $\triangle\phi\rightarrow\triangle\triangle\phi$
\item[MP] $\dfrac{\phi\quad\phi\rightarrow\chi}{\chi}$
\item[$\triangle$nec] $\dfrac{\vdash\phi}{\vdash\triangle\phi}$
\end{enumerate}
\end{definition}
\begin{remark}\label{rem:semilinearity}
Note that instead of $\triangle$ it is possible to treat $\Gtriangle$ as $\biG$ bi-Intuitionistic logic~\cite{Rauszer1974,Rauszer1977}\footnote{The name ‘bi-Intuitionistic’ is actually due to~\cite{Gore2000}.} with \emph{two} linearity axioms: $(p\rightarrow q)\vee(q\rightarrow p)$ and $\mathbf{1}\coimplies((p\coimplies q)\wedge(q\coimplies p))$ (cf., e.g.,~\cite{GrigoliaKiseliovaOdisharia2016}). It is crucial to add both these axioms. In fact, adding only $(p\rightarrow q)\vee(q\rightarrow p)$ results in the axiomatisation of \emph{semi-linear} bi-Heyting algebras (cf.~\cite{BezhanishviliMartinsMoraschini2022} for semi-linear extensions of bi-Intuitionistic logic and~\cite{Beazer1980} for semi-linear bi-Heyting algebras) and, respectively, semi-linear bi-Intuitionistic Kripke frames.
\end{remark}

Let us state several important properties of $\HGtriangle$ that we will utilise in the following parts of the paper.
\begin{proposition}\label{prop:HbiGcompleteness}
$\HGtriangle$ is strongly complete: for any $\Gamma\cup\{\phi\}\subseteq\Ltriangle$, it holds that
\[\Gamma\vdash_{\HGtriangle}\phi\text{ iff }\Gamma\models_{\biG}\phi.\]
\end{proposition}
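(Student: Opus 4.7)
The plan is to prove strong completeness by the standard Lindenbaum--Tarski route adapted to bi-G\"odel chains, noting that soundness is a routine verification that each axiom evaluates to $1$ under every $[0,1]$-valuation and that MP and $\triangle$-necessitation preserve validity (for the latter, since $\vdash\phi$ means $v(\phi)=1$ under every $v$, and $\triangle_\mathsf{G}1=1$).

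For completeness, I would first establish a $\triangle$-deduction theorem of the form $\Gamma,\phi\vdash_{\HGtriangle}\chi$ iff $\Gamma\vdash_{\HGtriangle}\triangle\phi\rightarrow\chi$, using axiom~11 ($\triangle\phi\rightarrow\phi$ and $\triangle\phi\rightarrow\triangle\triangle\phi$), the K-axiom for $\triangle$, and $\triangle$-necessitation for the right-to-left direction and a standard induction on proofs for the left-to-right direction. Assuming $\Gamma\not\vdash_{\HGtriangle}\chi$, I would then build a $\Ltriangle$-theory $\Gamma^*\supseteq\Gamma$ that is (i) deductively closed, (ii) prime (if $\phi\vee\psi\in\Gamma^*$ then $\phi\in\Gamma^*$ or $\psi\in\Gamma^*$), and (iii) still does not prove $\chi$. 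Primeness is obtained via a Lindenbaum-style enumeration using the prelinearity axiom~8: given a disjunction $\phi\vee\psi$ in $\Gamma^*$, at least one of the extensions by $\phi$ or by $\psi$ remains $\chi$-unprovable, since otherwise both $\triangle\phi\rightarrow\chi$ and $\triangle\psi\rightarrow\chi$ would be in $\Gamma^*$, and together with $\triangle(\phi\vee\psi)\rightarrow(\triangle\phi\vee\triangle\psi)$ this would yield $\chi$.

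The quotient of $\Ltriangle$ by the congruence $\phi\equiv\psi\iff\Gamma^*\vdash\phi\leftrightarrow\psi$ is then a bi-G\"odel algebra, and primeness together with prelinearity makes it linearly ordered; axioms 9--11 guarantee that the interpretation of $\triangle$ coincides with the two-valued operator $\triangle_\mathsf{G}$ of the definition. Since the algebra is countable, by the classical embedding of countable G\"odel chains (and, via $\triangle$, bi-G\"odel chains) into the standard $[0,1]$-chain --- pick any order- and endpoint-preserving embedding into a dense countable subset of $[0,1]$, which automatically preserves $\wedge,\vee,\rightarrow,{\sim},\triangle$ --- we obtain a $\biG$-valuation $v$ into $[0,1]$ such that $v(\phi)=1$ for every $\phi\in\Gamma^*$ while $v(\chi)<1$. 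In particular $\inf\{v(\phi):\phi\in\Gamma\}=1>v(\chi)$, contradicting $\Gamma\models_{\biG}\chi$.

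The main obstacle is the prime-theory construction and ensuring that the canonical chain embeds into the \emph{standard} $[0,1]$-algebra rather than into an arbitrary complete chain: this is what requires the prelinearity axiom for primeness and the countability of the language for the embedding. The deduction theorem with $\triangle$ is crucial here, because the ordinary deduction theorem fails in the presence of $\triangle$-necessitation; without it, the argument that forcing $\chi$ out of $\Gamma^*$ can be maintained through the enumeration of disjunctions breaks down. Once these pieces are in place, the argument is a direct adaptation of Baaz's original completeness proof for propositional $\mathsf{G}$ with $\triangle$ cited in the paper.
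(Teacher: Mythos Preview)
The paper does not actually supply a proof of this proposition: it is stated as a known result, recalled from Baaz's axiomatisation of $\biG$ (the reference \cite{Baaz1996} introduced just before the definition of $\HGtriangle$). Your sketch is the standard Lindenbaum--chain argument for G\"odel logic with $\triangle$ and is correct; in particular, your use of the $\triangle$-deduction theorem together with axiom~10 ($\triangle(\phi\vee\psi)\rightarrow(\triangle\phi\vee\triangle\psi)$) to push the prime-extension step through is exactly the point where the presence of $\triangle$ matters, and you handle it properly.
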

\begin{remark}\label{rem:trianglededuction}
Note that it is crucial for the soundness of $\HGtriangle$ that $\triangle$nec is applied only to theorems. Otherwise, we would derive $q$ from $p\wedge{\sim}\triangle p$ as follows (but $p\wedge{\sim}\triangle p\not\models_{\biG}q$ as discussed in Remark~\ref{rem:1preservationentailment}).
\begin{enumerate}
\item $p\wedge{\sim}\triangle p$ --- assumption.
\item $p$ --- from 1.
\item ${\sim}\triangle p$ --- from 1.
\item $\triangle p$ --- from 2 by $\triangle$nec.
\item $q$ --- from 3 and 4.
\end{enumerate}
\end{remark}

The calculus for $\Gsquare$ can be easily obtained from $\HGtriangle$: we only need to add De Morgan postulates for the propositional connectives.
\begin{definition}[$\HGsquare$ --- Hilbert-style calculus for $\Gsquare$]
The calculus consists of the following axioms and rules.
\begin{description}
\item[A0:] All instances of $\HGtriangle$ rules and axioms in $\Ltrianglesquare$ language.
\item[$\mathsf{neg}$:] $\neg\neg\phi\leftrightarrow\phi$
\item[$\mathsf{DeM}\wedge$:] $\neg(\phi\wedge\chi)\leftrightarrow(\neg\phi\vee\neg\chi)$
\item[$\mathsf{DeM}\vee$:] $\neg(\phi\vee\chi)\leftrightarrow(\neg\phi\wedge\neg\chi)$
\item[$\mathsf{DeM}\!\rightarrow$:] $\neg(\phi\rightarrow\chi)\leftrightarrow(\neg\chi\wedge{\sim}\triangle(\neg\chi\rightarrow\neg\phi))$
\item[$\mathsf{DeM}\triangle$:] $\neg\triangle\phi\leftrightarrow{\sim\sim}\neg\phi$
\item[$\mathsf{DeM}{\sim}$:] $\neg{\sim}\phi\leftrightarrow{\sim}\triangle\neg\phi$
\end{description}
\end{definition}

The completeness result for $\HGsquare$ formulated with $\coimplies$ instead of $\triangle$ was provided in~\cite{BilkovaFrittellaKozhemiachenkoMajer2022IJAR}. The proof followed the technique from~\cite{Wansing2008} that relied on the existence of $\NNF$'s in $\Gsquare$, Proposition~\ref{prop:+translationGsquare}, and the completeness of the Hilbert-style calculus for $\biG$. But the axioms of $\HGsquare$ in $\Ltrianglesquare$ contain the $\NNF$ transformations as well. Likewise, $\HGtriangle$ is strongly complete. Thus, we can state the strong completeness of $\HGsquare$.
\begin{proposition}\label{prop:HGsquarecompleteness}
$\HGsquare$ is strongly complete: for any $\Gamma\cup\{\phi\}\subseteq\Ltrianglesquare$, it holds that
\[\Gamma\vdash_{\HGsquare}\phi\text{ iff }\Gamma\models_{\Gsquare}\phi.\]
\end{proposition}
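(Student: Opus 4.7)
The plan is to follow the Wansing-style reduction hinted at in the paragraph preceding the statement: exploit the fact that $\HGsquare$ internally provides the $\neg$-pushing rewrites used to build negation normal forms, then apply the $^+$-translation of Proposition~\ref{prop:+translationGsquare} to reduce everything to $\biG$, and finally invoke the strong completeness of $\HGtriangle$ from Proposition~\ref{prop:HbiGcompleteness}.

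Soundness is a routine check. For each De Morgan axiom and for $\mathsf{neg}$ one verifies via Definition~\ref{def:Gsquaresemantics} that both $v_1$ and $v_2$ of the two sides coincide; the $\HGtriangle$-axioms in $\Ltrianglesquare$ (axiom A0) are sound because their $v_1$-value agrees with the value of the $^+$-image in $\biG$, which is $\HGtriangle$-sound, and Proposition~\ref{prop:+isenoughGsquare} then forces the $v_2$-value to vanish. The rules preserve $\Gsquare$-entailment by the same route.

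For completeness, suppose $\Gamma \models_{\Gsquare} \phi$. First, a straightforward induction using the De Morgan axioms yields $\vdash_{\HGsquare} \psi \leftrightarrow \NNF(\psi)$ for every $\psi \in \Ltrianglesquare$; together with Proposition~\ref{prop:GsquareNNF} this lets me assume that $\Gamma \cup \{\phi\}$ is already in NNF. Second, I would establish the entailment version of Proposition~\ref{prop:+translationGsquare}: for NNF formulas,
\[\Gamma \models_{\Gsquare} \phi \text{ iff } \Gamma^+ \models_{\biG} \phi^+,\]
where $^+$ is applied uniformly. Third, strong completeness of $\HGtriangle$ gives $\Gamma^+ \vdash_{\HGtriangle} \phi^+$, and this derivation already lives inside $\HGsquare$ via axiom A0. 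Finally, uniformly substituting each fresh atom $p^*$ by $\neg p$ converts that derivation into an $\HGsquare$-derivation of $\NNF(\phi)$ from $\NNF(\Gamma)$, which together with the first step delivers $\Gamma \vdash_{\HGsquare} \phi$.

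The only step that is not essentially immediate is the entailment version of the $^+$-translation. For NNF formulas the $v_1$-clause decouples into a genuine $\biG$-computation on the extended signature $\Prop \cup \{p^* : p \in \Prop\}$ under $v(p) := v_1(p)$ and $v(p^*) := v_2(p)$, so the $v_1$-half of $\Gsquare$-entailment translates verbatim to $\biG$-entailment of the $^+$-images. The non-trivial point is that the $v_2$-half of Definition~\ref{def:Gsquaresemantics} then becomes redundant: this is the entailment analogue of Proposition~\ref{prop:+isenoughGsquare} and is proved by the same argument, exploiting the dual behaviour of $v_1$ and $v_2$ on NNF formulas. Once that lemma is in hand, the rest of the proof is a purely syntactic assembly.
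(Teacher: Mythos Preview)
Your proposal is correct and follows essentially the same route as the paper: the paragraph preceding the statement explicitly says the proof goes via Wansing's technique, using the $\NNF$ transformations built into the De Morgan axioms, Proposition~\ref{prop:+translationGsquare}, and the strong completeness of $\HGtriangle$, which is exactly the reduction you carry out. Your observation that the only non-immediate ingredient is the \emph{entailment} form of the $^{+}$-translation (equivalently, the entailment analogue of Proposition~\ref{prop:+isenoughGsquare}) is accurate, and it is indeed obtained by the same duality argument underlying that proposition.
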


We end this section by establishing the following fact.
\begin{proposition}\label{prop:Gsquarecontraposition}
The following rule is admissible in $\Gsquare$:
\begin{align*}
\dfrac{\HGsquare\vdash\phi\rightarrow\chi}{\HGsquare\vdash\neg\chi\rightarrow\neg\phi}
\end{align*}
\end{proposition}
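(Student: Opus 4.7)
My plan is to argue semantically, using the strong completeness of $\HGsquare$ (Proposition~\ref{prop:HGsquarecompleteness}) to convert the syntactic hypothesis into a fact about every $\Gsquare$ model and then back again for the conclusion. Assume $\HGsquare\vdash\phi\rightarrow\chi$. By the soundness half of Proposition~\ref{prop:HGsquarecompleteness}, $\phi\rightarrow\chi$ is $\Gsquare$-valid, so for every model $\langle[0,1],v_1,v_2\rangle$ we have $v_1(\phi\rightarrow\chi)=1$ and $v_2(\phi\rightarrow\chi)=0$.

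Next I would unfold these two equalities using the clauses of Definition~\ref{def:Gsquaresemantics} and the definitions of $\rightarrow_\mathsf{G}$ and $\coimplies_\mathsf{G}$: $v_1(\phi\rightarrow\chi)=v_1(\phi)\rightarrow_\mathsf{G}v_1(\chi)=1$ holds iff $v_1(\phi)\leq v_1(\chi)$, while $v_2(\phi\rightarrow\chi)=v_2(\chi)\coimplies_\mathsf{G}v_2(\phi)=0$ holds iff $v_2(\chi)\leq v_2(\phi)$. Together these say exactly $v(\phi)\leq^\pm v(\chi)$, i.e.\ the truth-component grows from $\phi$ to $\chi$ while the falsity-component shrinks.

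Now I would compute $v(\neg\chi\rightarrow\neg\phi)$ using the same clauses and the identities $v_1(\neg\psi)=v_2(\psi)$, $v_2(\neg\psi)=v_1(\psi)$. The truth-component is $v_1(\neg\chi)\rightarrow_\mathsf{G}v_1(\neg\phi)=v_2(\chi)\rightarrow_\mathsf{G}v_2(\phi)$, which equals $1$ precisely because $v_2(\chi)\leq v_2(\phi)$; the falsity-component is $v_2(\neg\phi)\coimplies_\mathsf{G}v_2(\neg\chi)=v_1(\phi)\coimplies_\mathsf{G}v_1(\chi)$, which equals $0$ precisely because $v_1(\phi)\leq v_1(\chi)$. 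Hence $\neg\chi\rightarrow\neg\phi$ is $\Gsquare$-valid, and the completeness half of Proposition~\ref{prop:HGsquarecompleteness} yields $\HGsquare\vdash\neg\chi\rightarrow\neg\phi$.

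I do not foresee any real obstacle here: the entire argument is a bookkeeping check of the two coordinates, once the rule is routed through the soundness/completeness bridge. A purely syntactic derivation via the axioms $\mathsf{DeM}\!\rightarrow$, $\mathsf{neg}$, and $\triangle$nec together with $\HGtriangle$-manipulations is also feasible but considerably more tedious, which is why I prefer the semantic route.
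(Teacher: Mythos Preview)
Your semantic argument is correct and is precisely the route the paper intends: the proposition is stated immediately after the strong completeness of $\HGsquare$ (Proposition~\ref{prop:HGsquarecompleteness}) with no separate proof, so the reader is expected to pass through soundness/completeness exactly as you do. There is nothing to add.
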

\subsection{Semantics of the modal expansions}
Let us now provide semantics of $\KbiG$ (both fuzzy and crisp) and crisp $\KGsquare$. The language $\bimodalLtrianglesquare$ is defined via the following grammar.
\begin{align*}
\phi&\coloneqq p\in\Prop\mid\neg\phi\mid{\sim}\phi\mid\triangle\phi\mid(\phi\wedge\phi)\mid(\phi\vee\phi)\mid(\phi\rightarrow\phi)\mid\Box\phi\mid\lozenge\phi
\end{align*}
Two constants, $\mathbf{0}$ and $\mathbf{1}$, can be introduced as in section~\ref{subsec:propositionalfragments}. The $\neg$-less fragment of $\bimodalLtrianglesquare$ is denoted with $\bimodalLtriangle$.
\begin{definition}[Frames]\label{def:frames}
\begin{itemize}
\item[]
\item A \emph{fuzzy frame} is a tuple $\mathfrak{F}=\langle W,R\rangle$ with $W\neq\varnothing$ and $R:W\times W\rightarrow[0,1]$.
\item A \emph{crisp frame} is a tuple $\mathfrak{F}=\langle W,R\rangle$ with $W\neq\varnothing$ and $R\subseteq W\times W$.
\end{itemize}
\end{definition}
\begin{definition}[$\KbiG$ models]\label{def:KbiGsemantics}
A \emph{$\KbiG$ model} is a tuple $\mathfrak{M}=\langle W,R,e\rangle$ with $\langle W,R\rangle$ being a~(crisp or fuzzy) frame, and $e:\mathsf{Var}\times W\rightarrow[0,1]$. $e$ (a valuation) is extended on complex $\bimodalLtriangle$ formulas as follows:
\begin{align*}
e(\phi\circ\phi',w)&=e(\phi,w)\circ_\mathsf{G}e(\phi',w).\tag{$\circ\in\{{\sim},\triangle,\wedge,\vee,\rightarrow\}$}
\end{align*}
The interpretation of modal formulas on \emph{fuzzy} frames is as follows:
\begin{align*}
e(\Box\phi,w)&=\inf\limits_{w'\in W}\{wRw'\rightarrow_\mathsf{G}e(\phi,w')\},&e(\lozenge\phi,w)&=\sup\limits_{w'\in W}\{wRw'\wedge_\mathsf{G}e(\phi,w')\}.
\end{align*}
On \emph{crisp} frames, the interpretation is simpler (here, $\inf(\varnothing)\!=\!1$ and $\sup(\varnothing)\!=\!0$):
\begin{align*}
e(\Box\phi,w)&=\inf\{e(\phi,w'):wRw'\},&e(\lozenge\phi,w)&=\sup\{e(\phi,w'):wRw'\}.
\end{align*}
We say that $\phi\in\bimodalLtriangle$ is \emph{$\KbiG$ valid on frame $\mathfrak{F}$} (denote, $\mathfrak{F}\models_{\KbiG}\phi$) iff for any $w\in\mathfrak{F}$, it holds that $e(\phi,w)=1$ for any model $\mathfrak{M}$ on $\mathfrak{F}$. $\Gamma$ \emph{entails} $\chi$ (on $\mathfrak{F}$), denoted $\Gamma\models_{\KbiG}\phi$ ($\Gamma\models^{\mathfrak{F}}_{\KbiG}\chi$), iff for every model $\mathfrak{M}$ (on $\mathfrak{F}$) and every $w\in\mathfrak{M}$, it holds that
\[\inf\{e(\phi,w):\phi\in\Gamma\}\leq e(\chi,w).\]
\end{definition}

In what follows, we use $\KbiG$ to stand for the set of all $\bimodalLtriangle$ formulas valid on all \emph{crisp} frames and $\KbiG^\mathsf{f}$ to stand for the set of all $\bimodalLtriangle$ formulas valid on all \emph{fuzzy} frames.

In~\cite{BilkovaFrittellaKozhemiachenko2022IJCAR}, we argued that one can think of \emph{crisp} $R$ as availability of trusted sources represented by states in the model. \emph{Fuzzy} accessibility relation can be interpreted as the degree of trust an agent has in a source.

Since sources can refer to one another and can consider one another more or less reliable, we can understand modalities as follows. $\lozenge\phi$ is the search for evidence that supports $\phi$ from trusted sources: $e(\lozenge\phi,t)>0$ iff there is a source $t'$ to which $t$ has positive degree of trust and that has at least some certainty in $\phi$. If, however $t$ trusts nobody (i.e., $tRu=0$ for all $u$), then $e(\lozenge\phi,t)=0$. Similarly, $\Box\chi$ represents the search of evidence given by trusted sources that does not support $\chi$: $e(\Box\chi,t)<1$ iff there is a~source $t'$ that gives to $\chi$ less certainty than $t$ gives trust to $t'$. This means that if $t$ trusts no sources, or if all sources have at least as high confidence in $\chi$ as $t$ has in them, then $t$ fails to find a trustworthy enough counterexample.
\begin{definition}[$\KGsquare$ models]\label{def:KG2semantics}
A \emph{$\KGsquare$ model} is a tuple $\mathfrak{M}=\langle W,R,e_1,e_2\rangle$ with $\langle W,R\rangle$ being a \emph{crisp} frame, and $e_1,e_2:\mathsf{Var}\times W\rightarrow[0,1]$. The valuations which we interpret as support of truth and support of falsity, respectively, are extended on complex formulas as expected.

Namely, the propositional connectives are defined state-wise according to definition~\ref{def:Gsquaresemantics}. The modalities are defined as follows.
\begin{align*}
e_1(\Box\phi,w)&=\inf\{e_1(\phi,w'):wRw'\}&e_2(\Box\phi,w)&=\sup\{e_2(\phi,w'):wRw'\}\\
e_1(\lozenge\phi,w)&=\sup\{e_1(\phi,w'):wRw'\}&e_2(\lozenge\phi,w)&=\inf\{e_2(\phi,w'):wRw'\}
\end{align*}
We say that $\phi\in\bimodalLtrianglesquare$ is \emph{$\KGsquare$ valid on frame $\mathfrak{F}$} ($\mathfrak{F}\models_{\KGsquare}\phi$) iff for any $w\in\mathfrak{F}$, it holds that $e_1(\phi,w)=1$ and $e_2(\phi,w)=0$ for any model $\mathfrak{M}$ on $\mathfrak{F}$. $\Gamma$ \emph{entails} $\chi$ (on $\mathfrak{F}$), denoted $\Gamma\models_{\KGsquare}\phi$ ($\Gamma\models^{\mathfrak{F}}_{\KGsquare}\chi$), iff for every model $\mathfrak{M}$ (on $\mathfrak{F}$) and every $w\in\mathfrak{M}$, it holds that
\[\inf\{e_1(\phi,w):\phi\in\Gamma\}\leq e_1(\chi,w)\text{ and }\sup\{e_2(\phi,w):\phi\in\Gamma\}\geq e_2(\chi,w).\]
\end{definition}
\begin{remark}
Note that $\KGsquare$ entailment is paraconsistent in the following sense: if $\phi$ is not valid, then there is some $\chi$ s.t.\
$\phi,\neg\phi\not\models_{\KGsquare}\chi$ (i.e., the entailment is not explosive w.r.t.\ $\neg$). This accounts for the possibility of the sources giving contradictory information.

Furthermore, in contrast to $\mathbf{K}$, the agent can believe in contradictions \emph{in a non-trivial manner} as we have that $\Box(p\wedge\neg p)\not\models_{\KGsquare}\Box q$ and $\lozenge(p\wedge\neg p)\not\models_{\KGsquare}\lozenge q$.
\end{remark}
\begin{convention}
For each frame $\mathfrak{F}$ and each $w\in\mathfrak{F}$, we denote
\begin{align*}
R(w)&=\{w':wRw'=1\}\tag{for fuzzy frames}\\
R^+(w)&=\{w':wRw'>0\}\tag{for fuzzy frames}\\
R(w)&=\{w':wRw'\}\tag{for crisp frames}
\end{align*}
\end{convention}

Observe that $e(\Box\phi,w)=e(\neg\lozenge\neg\phi,w)$. Thus, we can treat $\lozenge$ as a defined connective. Furthermore, this means that $\KGsquare$ has the $\neg$ $\NNF$ property as well, and that the following statement holds.
\begin{proposition}[{\cite[Proposition~1]{BilkovaFrittellaKozhemiachenko2022IJCAR}}]\label{prop:+isenough}
$\mathfrak{F}\models_{\KGsquare}\!\phi$ iff for any model $\mathfrak{M}$ on $\mathfrak{F}$ and any $w\!\in\!\mathfrak{F}$, $e_1(\phi,w)\!=\!1$.
\end{proposition}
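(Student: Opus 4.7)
The forward direction is immediate from Definition~\ref{def:KG2semantics}. For the converse, my plan is a model-complementation argument: given any $\KGsquare$-model $\mathfrak{M}=\langle W,R,v_1,v_2\rangle$ on $\mathfrak{F}$, associate with it the complement model $\mathfrak{M}^d=\langle W,R,v_1^d,v_2^d\rangle$ on the same frame, defined on propositional variables by
\[v_1^d(p,u):=1-v_2(p,u),\qquad v_2^d(p,u):=1-v_1(p,u),\]
for every $p\in\Var$ and $u\in W$. The crux is to establish, by simultaneous induction on $\phi\in\bimodalLtrianglesquare$, the pair of identities
\[v_1^d(\phi,u)=1-v_2(\phi,u)\quad\text{and}\quad v_2^d(\phi,u)=1-v_1(\phi,u).\]

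The inductive steps reduce to routine arithmetic identities over $[0,1]_\mathsf{G}$, each verifiable by case analysis on whether the relevant values are $0$, are $1$, or lie strictly in between: $1-\min(a,b)=\max(1-a,1-b)$ takes care of $\wedge$ and $\vee$; $(1-a)\rightarrow_\mathsf{G}(1-b)=1-(b\coimplies_\mathsf{G}a)$ handles $\rightarrow$; ${\sim_\mathsf{G}}(1-a)=1-(1\coimplies_\mathsf{G}a)$ handles ${\sim}$; $\triangle_\mathsf{G}(1-a)=1-{\sim_\mathsf{G}}{\sim_\mathsf{G}}a$ handles $\triangle$; and $1-\sup X=\inf\{1-x:x\in X\}$ (together with its dual) handles $\Box$ and $\lozenge$. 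The $\neg$-case is uniform and uses both inductive hypotheses simultaneously, via the $\KGsquare$-clauses $v_1(\neg\phi)=v_2(\phi)$ and $v_2(\neg\phi)=v_1(\phi)$; in particular, no appeal to the NNF transformation of Proposition~\ref{prop:GsquareNNF} is needed.

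Once the identities are in place, the converse follows at once. Fixing $\mathfrak{M}$ and $w$, since $\mathfrak{M}^d$ is itself a $\KGsquare$-model on $\mathfrak{F}$, the hypothesis yields $v_1^d(\phi,w)=1$, and the first identity then gives $v_2(\phi,w)=1-v_1^d(\phi,w)=0$. Combined with the hypothesis $v_1(\phi,w)=1$ in $\mathfrak{M}$ itself, this establishes $\mathfrak{F}\models_{\KGsquare}\phi$. The main subtlety I foresee is the $\rightarrow$-step: because $x\mapsto 1-x$ is merely an order-reversing bijection of $[0,1]$ rather than a $\biG$-algebra isomorphism, the interchange $\rightarrow_\mathsf{G}\leftrightarrow\coimplies_\mathsf{G}$ (with arguments swapped) is not structural and must be verified by a direct case split on $a\leq b$ vs.\ $a>b$; the analogous remark applies to ${\sim}$ and $\triangle$. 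All of these checks are short, so the proof comes out cleanly.
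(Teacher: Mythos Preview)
Your complementation argument is correct: the two identities $v_1^d(\phi,u)=1-v_2(\phi,u)$ and $v_2^d(\phi,u)=1-v_1(\phi,u)$ do go through by simultaneous induction, and the arithmetic checks you list (the $\rightarrow_\mathsf{G}/\coimplies_\mathsf{G}$ swap, the ${\sim}/\triangle$ dualities, and $1-\sup=\inf(1-\cdot)$) are exactly what is needed. The $\neg$-case is indeed handled uniformly by swapping the two inductive hypotheses, and no appeal to $\NNF$ is required.

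Note, however, that the paper does not supply its own proof here: Proposition~\ref{prop:+isenough} is quoted from~\cite{BilkovaFrittellaKozhemiachenko2022IJCAR}, just as the propositional analogue (Proposition~\ref{prop:+isenoughGsquare}) is quoted from~\cite{BilkovaFrittellaKozhemiachenko2021}. The surrounding text only indicates the intended route: since $\Box\phi\leftrightarrow\neg\lozenge\neg\phi$ holds, $\KGsquare$ inherits the $\neg$-$\NNF$ property (Proposition~\ref{prop:GsquareNNF}), which lets one reduce the modal statement to the already-established propositional one. Your argument is a genuinely different and more self-contained route: rather than pushing $\neg$ to the leaves and invoking the propositional result as a black box, you exhibit the order-reversing involution $x\mapsto 1-x$ as a model-theoretic duality that interchanges the two valuations directly. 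This buys you a proof that works connective-by-connective without any normal-form preprocessing, at the cost of the small case analyses you flag for $\rightarrow$, ${\sim}$, and $\triangle$; the $\NNF$ route avoids those case splits but requires the normal-form lemma up front.
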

In fact, we can reduce $\KGsquare$ validity to $\KbiG$ validity in the same manner as we did for their propositional fragments.
\begin{proposition}\label{prop:+translation}
Let $\mathfrak{F}$ be a crisp frame and $\phi$ be in $\NNF$. Then $\mathfrak{F}\models_{\KGsquare}\phi$ iff $\mathfrak{F}\models_{\KbiG}\phi^+$ for any $\phi\in\bimodalLtrianglesquare$.
\end{proposition}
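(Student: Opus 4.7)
The approach is to reduce validity of $\phi \in \NNF$ in $\KGsquare$ to validity of the single-valuation formula $\phi^+$ in $\KbiG$, by showing a model-level correspondence $v_1(\phi,w) = v(\phi^+,w)$. First, invoke Proposition~\ref{prop:+isenough} so that $\mathfrak{F}\models_{\KGsquare}\phi$ amounts to $v_1(\phi,w) = 1$ for every $\KGsquare$ model on $\mathfrak{F}$ and every $w$. This isolates the support-of-truth component, which is exactly what a $\KbiG$ valuation is designed to track.

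The plan is to establish a bijection (on frames) between $\KGsquare$ models and $\KbiG$ models with respect to the variables of $\phi^+$. Given a $\KGsquare$ model $\mathfrak{M} = \langle W,R,v_1,v_2\rangle$ on $\mathfrak{F}$, define a $\KbiG$ model $\mathfrak{M}^+ = \langle W,R,v\rangle$ by setting $v(p,w) := v_1(p,w)$ for every variable $p$ occurring in $\phi$ and $v(p^*,w) := v_2(p,w)$ for every fresh variable $p^*$ arising from a negated atom $\neg p$ (on all other propositional variables $v$ can be chosen arbitrarily). Conversely, from a $\KbiG$ model $\mathfrak{N} = \langle W,R,v\rangle$ define $v_1(p,w) := v(p,w)$ and $v_2(p,w) := v(p^*,w)$ on the variables of $\phi$, and extend arbitrarily elsewhere. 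Then prove by induction on the structure of $\phi \in \NNF$ that $v_1(\phi,w) = v(\phi^+,w)$ at every $w$. The base case $p$ is immediate; the base case $\neg p$ uses $v_1(\neg p, w) = v_2(p,w) = v(p^*,w) = v((\neg p)^+, w)$. The propositional induction steps for $\wedge, \vee, \rightarrow, {\sim}, \triangle$ follow from Definition~\ref{def:Gsquaresemantics}, since $v_1$ evaluates each such connective exactly as a $\biG$ valuation would. For the modal cases, because the frame is crisp, $v_1(\Box\phi,w) = \inf\{v_1(\phi,w'):wRw'\}$ and by the induction hypothesis this equals $\inf\{v(\phi^+,w'):wRw'\} = v((\Box\phi)^+,w)$, and similarly for $\lozenge$.

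The main subtlety — though it is not really an obstacle — is that the correspondence on $v_1$ requires the NNF assumption in an essential way: since $\neg$ appears only on variables, the induction never has to deal with $\neg$ applied to a complex formula, so no clause is needed relating $\neg(\Box\phi)^+$ or $\neg(\phi\wedge\chi)^+$ to anything. Were $\phi$ not in NNF, one would first invoke the modal analogue of Proposition~\ref{prop:GsquareNNF}, using the additional valid equivalences $\neg\Box\phi \leftrightarrow \lozenge\neg\phi$ and $\neg\lozenge\phi \leftrightarrow \Box\neg\phi$ (which are immediate from Definition~\ref{def:KG2semantics}) to push $\neg$'s past the modalities. Likewise, crispness of $\mathfrak{F}$ is what makes the modal inductive step transparent, as the $\KGsquare$ and $\KbiG$ clauses for $\Box$ and $\lozenge$ coincide syntactically on $v_1$; over fuzzy frames this matching would fail. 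Having established the pointwise identity $v_1(\phi,\cdot) = v(\phi^+,\cdot)$ in both directions, the claim $\mathfrak{F}\models_{\KGsquare}\phi \iff \mathfrak{F}\models_{\KbiG}\phi^+$ follows at once.
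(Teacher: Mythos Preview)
Your proposal is correct and follows essentially the same approach as the paper: invoke Proposition~\ref{prop:+isenough}, set up the valuation correspondence $v(p,w)=v_1(p,w)$ and $v(p^*,w)=v_2(p,w)$ on the same frame, and prove $v_1(\phi,w)=v(\phi^+,w)$ by induction on $\phi\in\NNF$, with the $\Box$ and $\lozenge$ steps going through because the frame is crisp. You are in fact slightly more thorough than the paper, since you make the converse construction (from a $\KbiG$ model back to a $\KGsquare$ model) explicit, which is needed for the ``only if'' direction of the biconditional.
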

\begin{proof}
By Proposition~\ref{prop:+isenough}, we have that $\mathfrak{F}\models_{\KGsquare}\phi$ iff in every $\KGsquare$ model $\mathfrak{M}$ on $\mathfrak{F}$ and every $w\in\mathfrak{M}$ it holds that $e_1(\phi,w)=1$. It remains to construct a $\KbiG$ model $\mathfrak{M}^+$ on the same frame where $e_1(\phi,u)=e^+(\phi^+,u)$ for every $\phi$ and $u$.

For any $u\in W$, define the valuation $e^+$ as follows:
\begin{align*}
e^+(p,u)&=e_1(p,u)\\e^+(p^*,u)&=e_2(p,u)
\end{align*}

It now suffices to show that $e^+(\phi^+,u)=e_1(\phi,u)$ for any $\phi$ and $w$. We proceed by induction on $\phi$. The basis cases of literals are straightforward as well as those of the propositional connectives. Thus, we consider the case of $\phi=\Box\phi'$.
\begin{align*}
e_1(\Box\phi',u)&=\inf\{e_1(\phi',u'):uRu'\}\\
&=\inf\{e^+(\phi'^+,u'):uRu'\}\tag{by IH}\\
&=e^+(\Box\phi'^+,u)
\end{align*}

The case of $\phi=\lozenge\phi'$ can be considered in the same manner.
\end{proof}

We end the section by recalling the conservativity results.
\begin{proposition}[{\cite[Proposition~2]{BilkovaFrittellaKozhemiachenko2022IJCAR}}]\label{prop:conservativity}
\begin{enumerate}
\item[]
\item Let $\phi$ be a formula over $\{\mathbf{0},\wedge,\vee,\rightarrow,\Box,\lozenge\}$. Then, $\mathfrak{F}\models_{\KG}\phi$ iff $\mathfrak{F}\models_{\KbiG^\mathsf{f}}\phi$ and $\mathfrak{F}\models_{\KG^c}\phi$ iff $\mathfrak{F}\models_{\KbiG}\phi$, for any~$\mathfrak{F}$.
\item Let $\phi\in\bimodalLtriangle$. Then, $\mathfrak{F}\models_{\KbiG}\phi$ iff $\mathfrak{F}\models_{\KGsquare}\phi$, for any crisp~$\mathfrak{F}$.
\end{enumerate}
\end{proposition}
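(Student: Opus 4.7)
The plan is a straightforward semantic conservativity argument, run separately for the two parts. For part~(1), the key observation is that the clauses of definition~\ref{def:KbiGsemantics} for the connectives $\{\mathbf{0},\wedge,\vee,\rightarrow,\Box,\lozenge\}$ coincide pointwise with the corresponding clauses of $\KG$ (in the fuzzy case) and $\KG^c$ (in the crisp case). Given a frame $\mathfrak{F}$ and $\phi$ in this common sublanguage, any $\KG$-model on $\mathfrak{F}$ is trivially a $\KbiG^\mathsf{f}$-model (the clauses for $\triangle$ and $\coimplies$ are simply never invoked when computing $v(\phi,w)$ by induction on $\phi$), and conversely the reduct of any $\KbiG^\mathsf{f}$-model is a $\KG$-model on the same frame. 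Hence the set of worlds witnessing $v(\phi,w)<1$ in some model on $\mathfrak{F}$ is the same in both semantics, and the biconditional follows. The case $\KG^c$ versus $\KbiG$ is handled by exactly the same argument applied to crisp $\mathfrak{F}$.

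For part~(2), a formula $\phi\in\bimodalLtriangle$ contains no occurrence of $\neg$, so $\phi$ is vacuously in $\NNF$ and the $(+)$-translation of proposition~\ref{prop:+translation} leaves $\phi$ unchanged: $\phi^+=\phi$. Applying proposition~\ref{prop:+translation} then immediately gives $\mathfrak{F}\models_{\KGsquare}\phi$ iff $\mathfrak{F}\models_{\KbiG}\phi^+=\phi$ for any crisp $\mathfrak{F}$. If one prefers to avoid invoking that proposition, the direct argument is equally short: from a $\KGsquare$-model $\langle W,R,v_1,v_2\rangle$ on $\mathfrak{F}$, the valuation $v_1$ is by proposition~\ref{prop:+isenough} a $\KbiG$-valuation adequate for $\phi$; conversely, from a $\KbiG$-model $\langle W,R,v\rangle$ build a $\KGsquare$-model by setting $v_1:=v$ and $v_2$ to be, say, constantly $0$, and verify by induction on the $\neg$-free formula $\phi$ that $v_1(\phi,w)=v(\phi,w)$ at every world, which is immediate since no $v_1$-clause of definition~\ref{def:KG2semantics} applied to a $\neg$-free subformula references $v_2$.

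I do not anticipate any substantive obstacle: both parts rely only on the bookkeeping remark that enriching the language with connectives cannot affect validity of formulas in the old language, provided the clauses for the old connectives are preserved, together with an immediate appeal to proposition~\ref{prop:+translation} in the second part. The one point worth pausing over is the restriction of part~(2) to \emph{crisp} frames, which is forced by the fact that proposition~\ref{prop:+translation} (and the very definition of $\KGsquare$ in definition~\ref{def:KG2semantics}) applies only to crisp frames; no analogous statement is asserted in the fuzzy setting.
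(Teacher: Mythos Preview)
Your argument is correct. Note, however, that the paper does not supply a proof of this proposition: it is quoted verbatim from~\cite[Proposition~2]{BilkovaFrittellaKozhemiachenko2022IJCAR} and left unproved here, so there is no in-paper proof to compare against. Your semantic conservativity argument for part~(1) and your reduction of part~(2) to proposition~\ref{prop:+translation} via $\phi^+=\phi$ for $\neg$-free $\phi$ are exactly the expected arguments, and the alternative direct construction you sketch (taking $v_1$ as a $\KbiG$ valuation, or conversely extending a $\KbiG$ valuation by an arbitrary $v_2$) is precisely how the cited proposition is established in the source.
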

\subsection{Expressivity of $\triangle$}
We have added $\triangle$ to the language of G\"{o}del modal logic. It is thus instructive to investigate whether it gives us the expressive capacity one does not have without it.

First of all, it is easy to see that $\triangle$ allows us to express the statements of \emph{comparative belief}. For example\footnote{More examples and a more detailed discussion of such statements can be found in~\cite{BilkovaFrittellaKozhemiachenko2022IJCAR}.}, consider the following statement
\begin{quote}
\textsf{weather}: \emph{Paula considers a rain happening today strictly more likely than a hailstorm}.
\end{quote}
Thus, to formalise this statement, one needs a formula that is true iff the value of $\Box r$ (Paula believes it is going to rain today) is strictly greater than that of $\Box s$ (Paula believes that a hailstorm is going to happen today). Paula also does not state that she \emph{believes completely} in the rain, nor does she exclude the possibility of a hailstorm. Hence, $\Box r\wedge\Box{\sim}s$ does not suit the purpose. In fact, there is no G\"{o}del formula $\phi(p,q)$ s.t.\
\[e(\phi)=1\text{ iff }e(p)>e(q)\]
On the other hand, it is easy to see that
\[e({\sim}\triangle(\Box r\rightarrow\Box s),w)=1\text{ iff }e(\Box r,w)>e(\Box s,w)\]
and thus is a suitable formalisation of \textsf{weather}.

It is also possible to formalise comparative statements in the $\Gsquare$ (and hence, $\KGsquare$) setting. Notice, first, that when we consider support of truth and support of falsity \emph{independently}, it is no longer the case that every two beliefs are comparable. This, actually, aligns with our intuition: indeed, if the contents of two statements have no connection to each other, an agent might not be ready to choose one that they find more believable.

Formally, we can represent this as follows. Define
\begin{align*}
\triangle^\neg\phi\coloneqq\triangle\phi\wedge\neg{\sim}\triangle\phi
\end{align*}
One can see that
\begin{align*}
e(\triangle^\neg\phi,w)&=
\begin{cases}
(1,0)&\text{if }e(\phi,w)=(1,0)\\
(0,1)&\text{otherwise}
\end{cases}
\end{align*}
and that $\triangle^\neg(p\rightarrow q)\vee\triangle^\neg(q\rightarrow p)$ is not $\Gsquare$ valid while $\triangle(p\rightarrow q)\vee\triangle(q\rightarrow p)$ \emph{is $\biG$ valid}. Now, to formalise \textsf{weather} in a $\KGsquare$ setting, we use the following formula.
\[\psi\coloneqq\triangle^\neg(\Box s\rightarrow\Box r)\wedge{\sim}\triangle^\neg(\Box r\rightarrow\Box s)\]
One can check that, indeed
\begin{align*}
e(\psi,w)&=
\begin{cases}
(1,0)&\text{ iff }e(\Box s,w)<^\pm e(\Box r,w)\\
(0,1)&\text{ otherwise}
\end{cases}
\end{align*}

As we have just seen, the addition of $\triangle$ allows us to formalise the statements we were not able to treat without it. On a more formal side, however, $\triangle$ makes both $\Box$ and $\lozenge$ fragments\footnote{Note that $\Box$ and $\lozenge$ are not interdefinable in $\KG^c$~\cite[Corollary~6.2]{RodriguezVidal2021}, nor in $\KbiG$~\cite[Corollary~2]{BilkovaFrittellaKozhemiachenko2022IJCAR}.} of $\KbiG$ more expressive. Namely, $\lozenge$ fragment of $\KG$ has finite model property while crisp and fuzzy $\Box$ fragments coincide~\cite{CaicedoRodriguez2010}. We show that neither of these is the case in $\KbiG$.
\begin{proposition}\label{prop:triangleexpressive}
\begin{enumerate}
\item[]
\item $\mathfrak{F}\models_{\KbiG}\triangle\Box p\rightarrow\Box\triangle p$ iff $\mathfrak{F}$ is crisp.
\item There are only infinite countermodels of $\triangle\lozenge p\rightarrow\lozenge\triangle p$.
\end{enumerate}
\end{proposition}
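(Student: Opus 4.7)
The plan is to treat each item by direct semantic computation, proving validity by a forward calculation and invalidity by an explicit countermodel.

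For item (1), the easy direction (crisp $\Rightarrow$ valid) runs as follows. Assume $\mathfrak{F}$ is crisp. If $v(\triangle\Box p, w) < 1$ then $v(\triangle\Box p, w) = 0$ and the implication holds trivially. Otherwise $v(\Box p, w) = 1$, and on a crisp frame this means $v(p, w') = 1$ for every $w' \in R(w)$; hence $v(\triangle p, w') = 1$ for each such $w'$, giving $v(\Box\triangle p, w) = 1$.

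For the converse (non-crisp $\Rightarrow$ invalid), pick $w, w'$ with $0 < wRw' < 1$ and define a valuation by $v(p, w') \coloneqq wRw'$ and $v(p, u) \coloneqq 1$ for every other $u \in W$. A case analysis shows $wRu \leq v(p, u)$ for every $u$, so $wRu \rightarrow_\mathsf{G} v(p, u) = 1$ and hence $v(\Box p, w) = 1$, i.e., $v(\triangle\Box p, w) = 1$. On the other hand, $v(p, w') = wRw' < 1$ forces $v(\triangle p, w') = 0$, and $wRw' > 0$ then gives $wRw' \rightarrow_\mathsf{G} 0 = 0$, so $v(\Box\triangle p, w) = 0$. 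The key insight is that setting $v(p, w')$ exactly to $wRw'$ simultaneously achieves $v(\Box p, w) = 1$ and $v(\triangle p, w') = 0$; any strictly larger or smaller value breaks one of these.

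For item (2), I would first show that no finite countermodel exists. If $\mathfrak{M}$ is finite and $v(\triangle\lozenge p, w) = 1$, then the supremum defining $v(\lozenge p, w) = 1$ is attained over a finite set: in the crisp case some $w' \in R(w)$ satisfies $v(p, w') = 1$, and in the fuzzy case some $w'$ achieves $wRw' \wedge_\mathsf{G} v(p, w') = 1$, forcing both to equal $1$. Either way $v(\triangle p, w') = 1$, so $v(\lozenge\triangle p, w) = 1$ and the implication holds. An infinite countermodel is then the crisp frame with $W = \{w\} \cup \{w_n : n \in \mathbb{N}\}$ and $wRw_n$ for every $n$, valued by $v(p, w_n) \coloneqq 1 - \tfrac{1}{n+1}$. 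Then $v(\lozenge p, w) = \sup_n (1 - \tfrac{1}{n+1}) = 1$, making $v(\triangle\lozenge p, w) = 1$, while $v(p, w_n) < 1$ for every $n$ yields $v(\triangle p, w_n) = 0$ and thus $v(\lozenge\triangle p, w) = 0$. The only delicate step in the whole proposition is the valuation choice in the non-crisp direction of (1); the remaining computations are routine.
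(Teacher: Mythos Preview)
Your proof is correct and follows essentially the same approach as the paper---direct semantic computation with explicit countermodels. Two minor remarks: your aside in item~(1) that ``any strictly larger or smaller value breaks one of these'' is false (the paper in fact takes $wRw'=\tfrac{1}{2}$ and $v(p,w')=\tfrac{2}{3}$, and any value in $[wRw',1)$ would work); and for item~(2) you exhibit a \emph{crisp} infinite countermodel whereas the paper gives a \emph{fuzzy} one, but both serve equally well.
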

\begin{proof}
We begin with $1$. Assume that $\mathfrak{F}$ is crisp, and let $e$ be a valuation thereon s.t.\ $e(\triangle\Box p,w)=1$. Then, $e(\Box p,w)=1$. But $\mathfrak{F}$ is crisp, whence, $e(p,w')=1$ and thus, $e(\triangle p,w')=1$ for every accessible $w'$. Thus, $e(\Box\triangle p,w)=1$, as required. For the converse, assume that $\mathfrak{F}$ is \emph{fuzzy} and that w.l.o.g.\ $wRw'=\frac{1}{2}$. We refute $\triangle\Box p\rightarrow\Box\triangle p$ at $w$ as follows. Set $e(p,w')=\frac{2}{3}$ and $e(p,w'')=1$ in all other states. It is clear that $e(\triangle\Box p,w)=1$ but $e(\Box\triangle p,w)=0$ for we have $e(\triangle p,w')=0$.

For $2$, we proceed as follows. Let $\mathfrak{M}$ be a finite model and let $e(\triangle\lozenge p,w)=1$. Then, there is $w'\in R(w)$ s.t.\ $e(p,w')=1$, whence $e(\lozenge\triangle p,w)=1$. For the converse, assume that $e(\triangle\lozenge p,w)=1$ and $e(\triangle\lozenge p,w)<1$. We define an infinite fuzzy\footnote{Recall from~\cite{CaicedoRodriguez2010} that the crisp $\lozenge$ fragment of $\KG$ lacks FMP.} countermodel as follows.
\begin{itemize}
\item $W=\{w\}\cup\{w_i:i\in\mathbb{N}\text{ and }i\geq1\}$.
\item $wRw_i=\frac{i}{i+1}$; $uRu'=0$ for every $u\neq w$ and $u\neq w_i$.
\item $e(p,w_i)=\frac{i}{i+1}$.
\end{itemize}
It is clear that this model is infinite and that $e(\triangle\lozenge p\rightarrow\lozenge\triangle p,w)=0$.
\end{proof}
\begin{remark}
Note that it is also easy to show that $\lozenge\triangle p\rightarrow\triangle\lozenge p$ defines crisp frames but, of course, one can define crisp frames without $\triangle$: ${\sim\sim}\lozenge p\rightarrow\lozenge{\sim\sim}p$~\cite{CaicedoRodriguez2010}.
\end{remark}
\section{Axiomatisation of $\KbiG$ and $\KGsquare$\label{sec:axiomatisation}}
We are now finally ready to formulate Hilbert-style calculi for crisp $\KbiG$ and $\KGsquare$ and prove their completeness. Our completeness proof follows the approach of~\cite{CaicedoRodriguez2015} and~\cite{RodriguezVidal2021}. Note, however, that we cannot completely copy the original proof from~\cite{RodriguezVidal2021} because it employs that the entailment in G\"{o}del logic can be \emph{equivalently} defined either as preservation of the order on $[0,1]$ or as preservation of $1$ as the designated value. This, however, is not true of modal expansions of $\biG$ as we have seen in Remarks~\ref{rem:1preservationentailment} and~\ref{rem:trianglededuction}

We begin with the calculus for $\KbiG$ which we dub $\HKbiG$.
\begin{definition}[$\HKbiG$ --- Hilbert-style calculus for $\KbiG$]\label{def:HKbiG}
The calculus has the following axiom schemas and rules.
\begin{description}
\item[$\biG$:] All substitution instances of $\HGtriangle$ theorems and rules.
\item[$\mathbf{0}$:] ${\sim}\lozenge\mathbf{0}$
\item[K:] $\Box(\phi\rightarrow\chi)\rightarrow(\Box\phi\rightarrow\Box\chi)$; $\lozenge(\phi\vee\chi)\rightarrow(\lozenge\phi\vee\lozenge\chi)$
\item[FS:] $\lozenge(\phi\rightarrow\chi)\rightarrow(\Box\phi\rightarrow\lozenge\chi)$; $(\lozenge\phi\rightarrow\Box\chi)\rightarrow\Box(\phi\rightarrow\chi)$
\item[${\sim}\triangle\lozenge$:] ${\sim}\triangle(\lozenge\phi\rightarrow\lozenge\chi)\rightarrow\lozenge{\sim}\triangle(\phi\rightarrow\chi)$
\item[Cr:] $\Box(\phi\vee\chi)\rightarrow(\Box\phi\vee\lozenge\chi)$; $\triangle\Box\phi\rightarrow\Box\triangle\phi$
\item[nec:] $\dfrac{\vdash\phi}{\vdash\Box\phi}$; $\dfrac{\vdash\phi\rightarrow\chi}{\vdash\lozenge\phi\rightarrow\lozenge\chi}$
\end{description}
\end{definition}

As one sees from the definition above, we have added two modal axioms to the Hilbert-style calculus $\HKGc$ ($\mathcal{GK}^c$, in the notation of~\cite{RodriguezVidal2021}) that axiomatises $\KG^c$. ${\sim}\triangle\lozenge$ says that if the supremum of $\phi$ is strictly greater than supremum of $\chi$, then there must be a~state where the value of $\phi$ is greater than that of $\chi$. The second axiom is the definition of crisp frames without $\lozenge$ but with $\triangle$.

In what follows, we denote the set of $\HKbiG$ theorems (i.e., formulas provable without assumptions) with $\Th(\HKbiG)$. Observe that $\Th(\HKGc)\!\subseteq\!\Th(\HKbiG)$. In particular, $\mathsf{P}\!\coloneqq\!\Box(\phi\!\rightarrow\!\chi)\!\rightarrow\!(\lozenge\phi\!\rightarrow\!\lozenge\chi)$ is provable and $$\MBox:\dfrac{\Gamma\vdash\phi}{\Box\Gamma\vdash\Box\phi}$$ is admissible. Using this, we obtain the following statement.
\begin{proposition}\label{prop:BarcanCrispness}~
\begin{enumerate}
\item The Barcan's formula $\Box\triangle\phi\rightarrow\triangle\Box\phi$ is provable in $\HKbiG$ \emph{without} using $\mathbf{Cr}$.
\item The $\lozenge\triangle$ definition of crispness $\lozenge\triangle\phi\rightarrow\triangle\lozenge\phi$ is provable in $\HKbiG$.
\end{enumerate}
\end{proposition}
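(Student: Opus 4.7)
The plan is to show that $\Box\triangle\phi$ and $\lozenge\triangle\phi$ are provably $\triangle$-stable---equivalent to their own $\triangle$---so that the evident implications $\Box\triangle\phi\rightarrow\Box\phi$ (via $\MBox$, or nec and $\mathbf{K}$, from axiom $\triangle\phi\rightarrow\phi$) and $\lozenge\triangle\phi\rightarrow\lozenge\phi$ (via $\mathsf{P}$) lift by $\triangle$-monotonicity---the derived rule $\vdash\alpha\rightarrow\beta\;\Rightarrow\;\vdash\triangle\alpha\rightarrow\triangle\beta$ obtained from $\triangle$nec and axiom~$10$---to yield the Barcan-style principles. The whole argument rests on a propositional lemma of $\HGtriangle$: every formula of the form ${\sim}\psi$ is $\triangle$-stable, i.e.,\ $\vdash{\sim}\psi\leftrightarrow\triangle{\sim}\psi$. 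I would prove the nontrivial direction by first deriving $\vdash{\sim}\psi\vee{\sim\sim}\psi$ from pre-linearity (axiom~$8$) together with the contraction $(\alpha\rightarrow{\sim}\alpha)\rightarrow{\sim}\alpha$, then applying $\triangle$nec and axiom~$10$ to obtain $\vdash\triangle{\sim}\psi\vee\triangle{\sim\sim}\psi$, and finally assembling a Hilbert-style case split through the $\vee$-elimination axiom: in the second disjunct axiom~$11$ reduces $\triangle{\sim\sim}\psi$ to ${\sim\sim}\psi$, which together with the antecedent ${\sim}\psi$ yields $\mathbf{0}$, and hence $\triangle{\sim}\psi$ by ex falso. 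A symmetric case split on axiom~$9$ produces $\vdash\triangle\phi\leftrightarrow{\sim\sim}\triangle\phi$.

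For claim~$1$ (no use of $\mathbf{Cr}$), I would establish $\vdash\Box\triangle\phi\leftrightarrow{\sim}\lozenge{\sim}\triangle\phi$. The direction $(\rightarrow)$ combines the $\mathbf{FS}$ instance $\lozenge{\sim}\triangle\phi\rightarrow(\Box\triangle\phi\rightarrow\lozenge\mathbf{0})$ with axiom~$\mathbf{0}$, contraposition (axiom~$7$), and $\chi\rightarrow{\sim\sim}\chi$; the direction $(\leftarrow)$ uses the other $\mathbf{FS}$ axiom $(\lozenge{\sim}\triangle\phi\rightarrow\Box\mathbf{0})\rightarrow\Box({\sim}\triangle\phi\rightarrow\mathbf{0})$ together with $\triangle\phi\leftrightarrow{\sim\sim}\triangle\phi$ (to rewrite $\Box({\sim}\triangle\phi\rightarrow\mathbf{0})$ as $\Box\triangle\phi$ via nec$+\mathbf{K}$ congruence of $\Box$) and ex falso to convert ${\sim}\lozenge{\sim}\triangle\phi$ into $\lozenge{\sim}\triangle\phi\rightarrow\Box\mathbf{0}$. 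Since ${\sim}\lozenge{\sim}\triangle\phi$ is $\triangle$-stable by the preliminary lemma, applying $\triangle$ to the biconditional yields $\vdash\Box\triangle\phi\leftrightarrow\triangle\Box\triangle\phi$. Combined with $\vdash\triangle\Box\triangle\phi\rightarrow\triangle\Box\phi$---which follows from $\vdash\Box\triangle\phi\rightarrow\Box\phi$ (via $\MBox$ applied to $\triangle\phi\vdash\phi$) by $\triangle$-monotonicity---composition gives the Barcan formula. All axioms used, namely the propositional ones, $\mathbf{FS}$, $\mathbf{K}$, nec, and $\mathbf{0}$, avoid $\mathbf{Cr}$.

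For claim~$2$, $\mathbf{Cr}$ is essential through its disjunction axiom: necessitating axiom~$9$ and instantiating $\mathbf{Cr}$ with $\phi\coloneqq{\sim}\triangle\phi,\chi\coloneqq\triangle\phi$ yields $\vdash\Box{\sim}\triangle\phi\vee\lozenge\triangle\phi$. Combining $\mathsf{P}$ with ${\sim}\lozenge\mathbf{0}$ produces $\vdash\Box{\sim}\triangle\phi\rightarrow{\sim}\lozenge\triangle\phi$, hence $\vdash\lozenge\triangle\phi\rightarrow{\sim}\Box{\sim}\triangle\phi$ by contraposition and $\chi\rightarrow{\sim\sim}\chi$; the converse $\vdash{\sim}\Box{\sim}\triangle\phi\rightarrow\lozenge\triangle\phi$ follows by $\vee$-elimination on the $\mathbf{Cr}$-derived disjunction, using ex falso in the clash with $\Box{\sim}\triangle\phi$. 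Thus $\vdash\lozenge\triangle\phi\leftrightarrow{\sim}\Box{\sim}\triangle\phi$, and since the right-hand side is $\triangle$-stable, $\vdash\lozenge\triangle\phi\leftrightarrow\triangle\lozenge\triangle\phi$. Finally, $\mathsf{P}$ applied to the necessitation of axiom~$11$ gives $\vdash\lozenge\triangle\phi\rightarrow\lozenge\phi$, and $\triangle$-monotonicity yields $\vdash\triangle\lozenge\triangle\phi\rightarrow\triangle\lozenge\phi$; transitivity closes the proof. The main obstacle in both parts is the preliminary lemma: because the $\triangle$-deduction theorem converts only ${\sim}\psi\vdash\triangle{\sim}\psi$ into $\vdash\triangle{\sim}\psi\rightarrow\triangle{\sim}\psi$, the genuine implication $\vdash{\sim}\psi\rightarrow\triangle{\sim}\psi$ must be assembled directly in the Hilbert calculus, and it is this $\triangle$-stabilisation of ${\sim}$-formulas that drives both arguments.
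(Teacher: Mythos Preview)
Your proposal is correct and follows essentially the same strategy as the paper: establish that $\Box\triangle\phi$ (resp.\ $\lozenge\triangle\phi$) is Boolean/$\triangle$-stable, then lift the basic implication $\Box\triangle\phi\rightarrow\Box\phi$ (resp.\ $\lozenge\triangle\phi\rightarrow\lozenge\phi$) through~$\triangle$. The paper packages the first step as two admissible rules---from $\vdash{\sim}\psi\vee\psi$ infer $\vdash{\sim}\Box\psi\vee\Box\psi$ in $\HKG$ (hence without $\mathbf{Cr}$), and from $\vdash{\sim}\psi\vee\psi$ together with $\vdash\psi\rightarrow\chi$ infer $\vdash\psi\rightarrow\triangle\chi$---applied with $\psi=\triangle\phi$ via axiom~9; your route through the equivalence $\Box\triangle\phi\leftrightarrow{\sim}\lozenge{\sim}\triangle\phi$ (from $\mathbf{FS}$ and~$\mathbf{0}$) and the $\triangle$-stability of ${\sim}$-formulas is exactly the $\mathbf{FS}$-based computation that underlies the paper's first admissible rule, spelled out explicitly. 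For part~2 both arguments apply $\mathbf{Cr}$ to $\Box({\sim}\triangle\phi\vee\triangle\phi)$ to get $\Box{\sim}\triangle\phi\vee\lozenge\triangle\phi$, then reduce to ${\sim}\lozenge\triangle\phi\vee\lozenge\triangle\phi$ and finish the same way.
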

\begin{proof}
We begin with 1. First, observe that the following rule is admissible in $\HKG$.
\[\dfrac{\HKG\vdash{\sim}\phi\vee\phi}{\HKG\vdash{\sim}\Box\phi\vee\Box\phi}\]
Furthermore, the following rule is admissible in $\HGtriangle$:
\[\dfrac{\HGtriangle\vdash{\sim}\phi\vee\phi\quad\HGtriangle\vdash\phi\rightarrow\chi}{\HGtriangle\vdash\phi\rightarrow\triangle\chi}\]

Thus, we can prove the Barcan's formula as follows.
\begin{enumerate}
\item ${\sim}\triangle\phi\vee\triangle\phi$ --- a theorem of $\HGtriangle$
\item ${\sim}\Box\triangle\phi\vee\Box\triangle\phi$ --- from $1$
\item $\triangle\phi\rightarrow\phi$ --- a theorem of $\HGtriangle$
\item $\Box\triangle\phi\rightarrow\Box\phi$ --- from 3, $\mathbf{nec}$, and $\mathbf{K}$
\item $\Box\triangle\phi\rightarrow\triangle\Box\phi$ --- from 2 and 4
\end{enumerate}

To prove the $\lozenge\triangle$ definition of the crispness, we proceed as follows.
\begin{enumerate}
\item ${\sim}\triangle\phi\vee\triangle\phi$ --- a theorem of $\HGtriangle$
\item $\Box({\sim}\triangle\phi\vee\triangle\phi)$ --- from 1 using $\mathbf{nec}$
\item $\Box{\sim}\triangle\phi\vee\lozenge\triangle\phi$ --- from 2 using $\mathbf{Cr}$
\item ${\sim}\lozenge\triangle\phi\vee\lozenge\triangle\phi$ --- from 3 since $\HKG\vdash\Box{\sim}\chi\rightarrow{\sim}\lozenge\phi$
\item $\triangle\phi\rightarrow\phi$ --- a theorem of $\HGtriangle$
\item $\lozenge\triangle\phi\rightarrow\lozenge\phi$ --- from 5 using $\mathbf{K}$
\item $\lozenge\triangle\phi\rightarrow\triangle\lozenge\phi$ --- from 4 and 6
\end{enumerate}
\end{proof}

Furthermore, just as in the case of $\KG$, the modal rules of $\HKbiG$ are restricted to theorems. Thus, we can reduce the proofs in $\HKbiG$ to the $\HGtriangle$ derivations from $\Th(\HKbiG)$.
\begin{proposition}\label{prop:propositionalreduction}
For any $\Gamma\cup\{\phi,\chi\}\subseteq\bimodalLtriangle$, it holds that
\begin{align*}
\Gamma\vdash_{\HKbiG}\phi&\text{ iff }\Gamma,\Th(\HKbiG)\vdash_{\HGtriangle}\phi
\end{align*}
\end{proposition}
We are now ready to prove the completeness theorem. Our proof is a modification of the completeness theorem for crisp G\"{o}del modal logic in~\cite{RodriguezVidal2021}.
\begin{convention}
For any $\phi\in\bimodalLtriangle$, we denote with $\Sfconst(\phi)$ the set containing all its subformulas and the constants $\mathbf{1}$ and $\mathbf{0}$.
\end{convention}

For every $\tau\in\bimodalLtriangle$ s.t.\ $\HKbiG\not\vdash\tau$, we are building a canonical model $\mathfrak{M}^\tau$ that refutes it.
\begin{definition}[Canonical model for $\tau$]
We define $\mathfrak{M}^\tau=\langle W^\tau,R^\tau,e^\tau\rangle$ as follows.
\begin{itemize}
\item $W^\tau$ is the set of all $\Gtriangle$ homomorphisms $u:\bimodalLtriangle\rightarrow[0,1]_{\biG}$ s.t.\ all theorems of $\HKbiG$ are evaluated at $1$.
\item $uR^\tau u'$ iff $u(\Box\psi)\leq u'(\psi)$ and $u'(\psi)\leq u(\lozenge\psi)$ for all $\psi\in\Sfconst(\tau)$.
\item $e^\tau(p,u)=u(p)$.
\end{itemize}
\end{definition}

Following~\cite{RodriguezVidal2021}, we introduce the following notation.
\begin{convention}
Let $u\in W^\tau$, $\alpha\in[0,1]$, $\heartsuit\in\{\Box,\lozenge\}$, and $\curlyvee\in\{<,\leq,>,\geq,=\}$. We set
\begin{align*}
\heartsuit^{\curlyvee\alpha}_u&\coloneqq\{\chi\in\Sfconst(\tau):u(\heartsuit\chi)\curlyvee\alpha\}&{}^*\Box^{=1}_u&\coloneqq\{\psi:u(\Box\psi)=1\}
\end{align*}
Observe that $\heartsuit^{\curlyvee\alpha}_u$ is always finite, whence $\bigwedge\heartsuit^{\curlyvee\alpha}_u,\bigvee\heartsuit^{\curlyvee\alpha}_u\in\bimodalLtriangle$. Furthermore, if $\heartsuit^{\curlyvee\alpha}_u=\varnothing$, we set $\bigwedge\varnothing=\mathbf{1}$ and $\bigvee\varnothing=\mathbf{0}$.
\end{convention}

The following two statements are the analogues of Lemma~4.1 and Remark~4.2 from~\cite{RodriguezVidal2021} and can be proven in exactly the same manner.
\begin{proposition}\label{prop:boxwrapping}
Let $\phi\in\Box^{=\alpha}_u$ with $\alpha<1$ and set \[\delta\coloneqq\left(\bigwedge\Box^{>\alpha}_u\rightarrow\phi\right)\rightarrow\phi\] Then $u(\Box\delta)>\alpha$.
\end{proposition}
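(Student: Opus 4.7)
The plan is to derive the inequality $u(\Box\delta) > \alpha$ from a Hilbert-style theorem about $\delta$, exploiting the fact that every $u \in W^\tau$ sends theorems of $\HKbiG$ to $1$ and that in the standard G\"{o}del algebra $a \rightarrow_\mathsf{G} b = 1$ iff $a \leq b$. The starting point is the intuitionistic (hence $\HGtriangle$-provable) tautology $A \rightarrow ((A \rightarrow B) \rightarrow B)$. Instantiating with $A = \bigwedge \Box^{>\alpha}_u$ and $B = \phi$ gives $\vdash_{\HGtriangle} \bigwedge \Box^{>\alpha}_u \rightarrow \delta$, and hence (substitution instances of $\HGtriangle$ being axioms of $\HKbiG$) $\vdash_{\HKbiG} \bigwedge \Box^{>\alpha}_u \rightarrow \delta$.

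Next I would lift this to a theorem about $\Box\delta$. Applying the admissible rule $\MBox$ (or equivalently $\mathbf{nec}$ together with $\mathbf{K}$) and the standard $\HKbiG$-provable distribution $\Box(\psi \wedge \chi) \leftrightarrow (\Box\psi \wedge \Box\chi)$, iterated across the finite conjunction, one obtains
\[\vdash_{\HKbiG} \bigwedge \{\Box\chi : \chi \in \Box^{>\alpha}_u\} \rightarrow \Box\delta.\]
Evaluating this theorem under $u$ yields $u\bigl(\bigwedge \{\Box\chi : \chi \in \Box^{>\alpha}_u\}\bigr) \leq u(\Box\delta)$, since the G\"{o}del value of the antecedent cannot exceed that of the consequent when the implication evaluates to $1$.

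Finally, I would bound the left-hand side from below. Because $\Box^{>\alpha}_u \subseteq \Sfconst(\tau)$ and $\Sfconst(\tau)$ is finite, the conjunction is a finite G\"{o}del meet. If $\Box^{>\alpha}_u = \varnothing$, the convention $\bigwedge\varnothing = \mathbf{1}$ makes the value equal to $1 > \alpha$. Otherwise the meet is realised as $\min\{u(\Box\chi) : \chi \in \Box^{>\alpha}_u\}$, and by the very definition of $\Box^{>\alpha}_u$ each such $u(\Box\chi)$ is strictly greater than $\alpha$; since the minimum of finitely many values each strictly above $\alpha$ is itself strictly above $\alpha$, the desired strict inequality $u(\Box\delta) > \alpha$ follows.

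I expect no serious obstacle: the whole argument hinges on the elementary tautology $A \rightarrow ((A \rightarrow B) \rightarrow B)$ and on transferring conjunctions through $\Box$, both of which are available in $\HKbiG$. The only point that requires a little care is the finiteness appeal, which is precisely why the proposition is formulated relative to $\Sfconst(\tau)$ rather than an arbitrary set of formulas.
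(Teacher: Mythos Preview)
Your argument is correct and is essentially the one the paper has in mind: the paper does not spell out a proof but states that the proposition is the analogue of \cite[Lemma~4.1]{RodriguezVidal2021} and ``can be proven in exactly the same manner,'' and that proof proceeds precisely via the tautology $A\rightarrow((A\rightarrow B)\rightarrow B)$, necessitation/$\mathbf{K}$, distribution of $\Box$ over finite conjunctions, and the finiteness of $\Sfconst(\tau)$. A minor remark: the empty case never actually arises, since $\mathbf{1}\in\Sfconst(\tau)$ and $u(\Box\mathbf{1})=1>\alpha$, so $\mathbf{1}\in\Box^{>\alpha}_u$; but your handling of it is harmless.
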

\begin{proposition}\label{prop:boxwrappingremark}
For any $\biG$ homomorphism $e$ s.t.\ $e(\delta)=1$ and $e(\phi)<1$, it holds that $e(\phi)<e(\chi)$ for any $\chi\in\Box^{>\alpha}_u$.
\end{proposition}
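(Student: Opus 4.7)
The plan is to unpack $\delta=(\bigwedge\Box^{>\alpha}_u\rightarrow\phi)\rightarrow\phi$ using the behaviour of Gödel implication and then do a two-case analysis on $v(\bigwedge\Box^{>\alpha}_u)$ compared with $v(\phi)$. Recall that in the standard Gödel algebra, $a\rightarrow_\mathsf{G}b=1$ when $a\leq b$, and $a\rightarrow_\mathsf{G}b=b$ otherwise. The hypothesis $v(\delta)=1$ therefore reads as $v(\bigwedge\Box^{>\alpha}_u\rightarrow\phi)\leq v(\phi)$.

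First I would rule out the case $v(\bigwedge\Box^{>\alpha}_u)\leq v(\phi)$. In that case the inner implication collapses to $1$, so $v(\delta)=v(1\rightarrow_\mathsf{G}\phi)=v(\phi)$; combined with $v(\delta)=1$ this would force $v(\phi)=1$, contradicting the hypothesis $v(\phi)<1$. Hence we must be in the remaining case $v(\bigwedge\Box^{>\alpha}_u)>v(\phi)$, in which the inner implication evaluates to $v(\phi)$ and $v(\delta)=v(v(\phi)\rightarrow_\mathsf{G}v(\phi))=1$ automatically, so this case is the actual one.

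Finally I would note that since $v$ is a homomorphism into $[0,1]_\mathsf{G}$ and the conjunction is interpreted as $\min$, the finitary meet $v(\bigwedge\Box^{>\alpha}_u)=\min\{v(\chi):\chi\in\Box^{>\alpha}_u\}$ (using the convention $\bigwedge\varnothing=\mathbf{1}$ for the trivial case). Therefore $v(\chi)\geq v(\bigwedge\Box^{>\alpha}_u)>v(\phi)$ for every $\chi\in\Box^{>\alpha}_u$, which is the desired strict inequality. The only subtlety, and hence the main place one has to be careful, is that Gödel implication does not give strict information directly — it only tells us $\leq$ versus equals-consequent — so the strictness $v(\phi)<v(\chi)$ is obtained indirectly by excluding the equality case via the hypothesis $v(\phi)<1$.
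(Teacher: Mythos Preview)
Your argument is correct and is exactly the natural unpacking of $\delta=(\bigwedge\Box^{>\alpha}_u\rightarrow\phi)\rightarrow\phi$ under the G\"{o}del semantics. The paper itself does not spell out a proof here: it merely points to \cite[Remark~4.2]{RodriguezVidal2021} and says the statement ``can be proven in exactly the same manner''; your case split on whether $v(\bigwedge\Box^{>\alpha}_u)\leq v(\phi)$ is precisely that argument.
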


We are now ready to prove the analogue of~\cite[Proposition~4.3]{RodriguezVidal2021}. Note, however, that we cannot exactly follow the original proof step by step as it uses the fact that the propositional entailment in G\"{o}del logic can be \emph{equivalently} defined either via preservation of the order on $[0,1]$ and via preservation of~$1$. Namely, the original proof is built on failing an instance of G\"{o}del entailment in such a way that the premises are evaluated at $1$. This, of course, is not the case in $\Gtriangle$ for arbitrary formulas as we have noted above. Thus, we need to modify the formulas used in the proof. Namely, instead of $\delta$, we need to use $\triangle\delta$.
\begin{proposition}\label{prop:homomorphismexistencebox}
For any $\alpha\!<\!1$ and $\phi\!\in\!\Box^{=\alpha}_u$, there exists a~propositional homomorphism $h\!:\!\bimodalLtriangle\!\rightarrow\![0,1]_{\biG}$, s.t.:
\begin{description}
\item[C1:] $h(\chi)=1$ for any $\chi\in\Th(\HKbiG)$;
\item[C2:] $h(\psi)=1$ for every $\psi\in{}^*\Box^{=1}_u$;
\item[C3:] $h(\rho)<1$ for every $\rho\in\lozenge^{<1}_u$;
\item[C4:] $h(\phi)<h(\sigma)$ for every $\sigma\in\Box^{>\alpha}_u$.
\end{description}
\end{proposition}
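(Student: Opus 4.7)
The plan is to recast C4 as an additional ``$=1$'' constraint and then show the resulting set of formulas is $1$-satisfiable in $\biG$ via a contradiction argument using the modal axioms of $\HKbiG$. First, by Proposition~\ref{prop:boxwrappingremark}, the combination of C4 and $h(\phi)<1$ is equivalent to $h(\delta)=1$; and a direct computation in G\"{o}del semantics shows this holds precisely when $h({\sim}\triangle(A\rightarrow\phi))=1$, where I abbreviate $A\coloneqq\bigwedge\Box^{>\alpha}_u$. Thus finding $h$ satisfying C1--C4 reduces to producing a $\biG$-homomorphism assigning value $1$ to every formula in
\[\Gamma\coloneqq\Th(\HKbiG)\cup{}^*\Box^{=1}_u\cup\{{\sim}\triangle(A\rightarrow\phi)\}\cup\{{\sim}\triangle\rho:\rho\in\lozenge^{<1}_u\}.\]
Using the $\triangle$-deduction theorem (Proposition~\ref{prop:HGtriangleDT}), strong completeness of $\HGtriangle$ (Proposition~\ref{prop:HbiGcompleteness}), and compactness, such $h$ exists iff $\HKbiG\not\vdash{\sim}\triangle\bigwedge\Gamma_0$ for every finite $\Gamma_0\subseteq\Gamma$.

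Assume to the contrary that $\HKbiG\vdash{\sim}\triangle\bigwedge\Gamma_0$ for some such $\Gamma_0$, consisting (after absorbing theorems) of $\psi_1,\dots,\psi_n\in{}^*\Box^{=1}_u$, the formula ${\sim}\triangle(A\rightarrow\phi)$, and ${\sim}\triangle\rho_1,\dots,{\sim}\triangle\rho_m$ with $\rho_i\in\lozenge^{<1}_u$. Distributing $\triangle$ over $\wedge$, simplifying $\triangle{\sim}\triangle X$ to ${\sim}\triangle X$, and using the elementary equi-validity of $B\wedge\bigwedge_k{\sim}\triangle Y_k\rightarrow\mathbf{0}$ with $B\rightarrow\bigvee_k\triangle Y_k$, this derivation is equivalent to
\[\HKbiG\vdash\bigwedge\triangle\psi_j\rightarrow\triangle(A\rightarrow\phi)\vee\bigvee\triangle\rho_i.\]
Applying $\mathbf{nec}$, the $\mathbf{K}$ axiom, the $\mathbf{Cr}$ axiom $\Box(X\vee Y)\rightarrow\Box X\vee\lozenge Y$, and the $\mathbf{K}$-style distribution $\lozenge\bigvee\chi_i\rightarrow\bigvee\lozenge\chi_i$ yields
\[\HKbiG\vdash\bigwedge\Box\triangle\psi_j\rightarrow\Box\triangle(A\rightarrow\phi)\vee\bigvee\lozenge\triangle\rho_i.\]

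Evaluating at $u$, I use $\Box\triangle\leftrightarrow\triangle\Box$ (from Barcan in Proposition~\ref{prop:BarcanCrispness} together with $\mathbf{Cr}$) and $\lozenge\triangle\rightarrow\triangle\lozenge$ (also Proposition~\ref{prop:BarcanCrispness}). The LHS equals $1$ since each $u(\Box\psi_j)=1$. On the RHS, $u(\lozenge\triangle\rho_i)\leq\triangle u(\lozenge\rho_i)=0$ because $\rho_i\in\lozenge^{<1}_u$; and by $\mathbf{K}$, $u(\Box(A\rightarrow\phi))\leq u(\Box A)\rightarrow u(\Box\phi)=\alpha$, the last equality holding because $u(\Box A)=\min_{\chi\in\Box^{>\alpha}_u}u(\Box\chi)>\alpha=u(\Box\phi)$. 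Thus $u(\Box\triangle(A\rightarrow\phi))=\triangle\alpha=0$, so the RHS evaluates to $0$, contradicting the LHS. The main obstacle is that the $\triangle$-deduction theorem is sharp, sending every value below $1$ to $0$; one therefore cannot use $\delta$ directly inside the theory, since Proposition~\ref{prop:boxwrapping} only supplies $u(\Box\delta)>\alpha$ rather than $u(\Box\delta)=1$. Equivalently encoding C4 through ${\sim}\triangle(A\rightarrow\phi)$ sidesteps this by placing the strict comparison inside the scope of a single $\Box$, where $\mathbf{K}$ can extract a bound strictly below $1$.
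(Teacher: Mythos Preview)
Your argument is correct and takes a genuinely different route from the paper's. The paper adapts the Rodr\'{i}guez--Vidal proof: it works with the auxiliary formula $\delta=(A\rightarrow\phi)\rightarrow\phi$, invokes Proposition~\ref{prop:boxwrapping} to obtain $u(\Box\delta)>\alpha$, derives a certain $\HGtriangle$-disjunction, and then performs a case split according to which disjunct is evaluated to $1$ at $u$, exhibiting in each case a failing $\biG$-entailment whose refuting homomorphism is the desired~$h$. You bypass the case split entirely by encoding C4 directly as $h({\sim}\triangle(A\rightarrow\phi))=1$ (this says precisely $h(\phi)<h(A)$, and since $\mathbf{1}\in\Box^{>\alpha}_u$ it already forces $h(\phi)<1$), then running a single compactness-and-contradiction argument. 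The key step you exploit is that $\mathbf{K}$ gives $u(\Box(A\rightarrow\phi))\leq u(\Box A)\rightarrow_{\mathsf{G}}u(\Box\phi)=\alpha<1$, whence by the equivalence $\Box\triangle\leftrightarrow\triangle\Box$ (Barcan plus $\mathbf{Cr}$) one obtains $u(\Box\triangle(A\rightarrow\phi))=0$; this is more direct than the paper's appeal to Proposition~\ref{prop:boxwrapping}. The paper's presentation has the advantage of mirroring~\cite{RodriguezVidal2021} and isolating exactly where the $\triangle$-deduction theorem forces changes; yours is shorter and self-contained but relies a bit more heavily on the derived $\Box\triangle\leftrightarrow\triangle\Box$.

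One small inaccuracy: your opening claim that ``C4 together with $h(\phi)<1$ is equivalent to $h(\delta)=1$'' fails in the case $h(\phi)=1$, where $h(\delta)=1$ trivially. This is harmless, however, since you immediately switch to the encoding ${\sim}\triangle(A\rightarrow\phi)$, which \emph{is} exactly equivalent to C4, and the remainder of the argument uses only that. Also note that you may assume without loss of generality that ${\sim}\triangle(A\rightarrow\phi)$ appears in $\Gamma_0$, since enlarging $\Gamma_0$ preserves the derivability of ${\sim}\triangle\bigwedge\Gamma_0$.
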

\begin{proof}
Recall that for any $\phi_1$, $\phi_2$, and $\phi_3$, it holds that
\[\HGtriangle\vdash(((\phi_1\rightarrow\phi_2)\rightarrow\phi_2)\wedge(\phi_2\rightarrow\phi_3))\vee(((\phi_1\rightarrow\phi_2)\rightarrow\phi_2)\rightarrow(\phi_3\rightarrow\phi_2))\]
We replace $\phi_1$ with $\bigwedge\Box^{>\alpha}_u$, $\phi_2$ with $\phi$, and use $\delta$ from Proposition~\ref{prop:boxwrapping} which gives us that
\[\HKbiG\vdash\left(\delta\wedge\left(\phi\rightarrow\bigvee\lozenge^{<1}_u\right)\right)\vee\left(\delta\rightarrow\left(\bigvee\lozenge^{<1}_u\rightarrow\phi\right)\right)\]

Since $$\dfrac{\HGtriangle\vdash(\varphi\wedge\varphi')\vee\eta}{\HGtriangle\vdash(\triangle\varphi\wedge\varphi')\vee\eta}$$ is admissible in $\HGtriangle$, we have 
\[\HKbiG\vdash\left(\triangle\delta\wedge\left(\phi\rightarrow\bigvee\lozenge^{<1}_u\right)\right)\vee\left(\delta\rightarrow\left(\bigvee\lozenge^{<1}_u\rightarrow\phi\right)\right)\]

Now, we use the commutativity of $\vee$, apply $\mathbf{nec}$, and then $\mathbf{Cr}$ to obtain that
\[\HKbiG\vdash\lozenge\left(\triangle\delta\wedge\left(\phi\rightarrow\bigvee\lozenge^{<1}_u\right)\right)\vee\Box\left(\delta\rightarrow\left(\bigvee\lozenge^{<1}_u\rightarrow\phi\right)\right)\]
Since $u\in W^\tau$, we have that one of the following holds:
\begin{enumerate}
\item[(A)] $u\left(\lozenge\left(\triangle\delta\wedge\left(\phi\rightarrow\bigvee\lozenge^{<1}_u\right)\right)\right)=1$ or
\item[(B)] $u\left(\Box\left(\delta\rightarrow\left(\bigvee\lozenge^{<1}_u\rightarrow\phi\right)\right)\right)=1$.
\end{enumerate}
We prove the statement in both cases.

Assume that (A) holds. We show that
\begin{align}
\Th(\HKbiG),\triangle({}^*\Box^{=1}_u),\triangle\delta\not\models_{\biG}\left(\phi\rightarrow\bigvee\lozenge^{<1}_u\right)\rightarrow\bigvee\lozenge^{<1}_u\label{equ:3.5.A}
\end{align}

We reason for the contradiction. Note that $\HGtriangle$ is strongly complete w.r.t.\ $\biG$, and that $\HGtriangle\vdash\triangle\delta\rightarrow\triangle\triangle\delta$. Thus, applying Proposition~\ref{prop:propositionalreduction}, we obtain
\[\triangle({}^*\Box^{=1}_u)\vdash_{\HKbiG}\left(\triangle\delta\wedge\left(\phi\rightarrow\bigvee\lozenge^{<1}_u\right)\right)\rightarrow\bigvee\lozenge^{<1}_u\]
We apply $\MBox$, $\mathsf{P}$, and $\mathbf{K}$ and get
\[\Box\triangle({}^*\Box^{=1}_u)\vdash_{\HKbiG}\lozenge\left(\triangle\delta\wedge\left(\phi\rightarrow\bigvee\lozenge^{<1}_u\right)\right)\rightarrow\bigvee\lozenge\lozenge^{<1}_u\]
Now, since $\triangle\Box\phi\rightarrow\Box\triangle\phi$ is an axiom scheme $\mathbf{Cr}$, we have that
\[\triangle\Box({}^*\Box^{=1}_u)\vdash_{\HKbiG}\lozenge\left(\triangle\delta\wedge\left(\phi\rightarrow\bigvee\lozenge^{<1}_u\right)\right)\rightarrow\bigvee\lozenge\lozenge^{<1}_u\]
We apply Proposition~\ref{prop:propositionalreduction} again which gives us that
\[\Th(\HKbiG),\triangle\Box({}^*\Box^{=1}_u)\models_{\biG}\lozenge\left(\triangle\delta\wedge\left(\phi\rightarrow\bigvee\lozenge^{<1}_u\right)\right)\rightarrow\bigvee\lozenge\lozenge^{<1}_u\]
However, we can show that $u$ refutes this entailment. Indeed, observe that $u(\Th(\HKbiG))=\{1\}$ since $u\in W^\tau$. Moreover, since $u(\Box{}^*\Box^{=1}_u)=\{1\}$ by definition, we have that $u(\triangle\Box{}^*\Box^{=1}_u)=1$ as well. Finally, (A) gives us that $u\left(\lozenge\left(\triangle\delta\wedge\left(\phi\rightarrow\bigvee\lozenge^{<1}_u\right)\right)\right)=1$ but $u\left(\bigvee\lozenge\lozenge^{<1}_u\right)<1$ by definition.

Thus, since the premises of \eqref{equ:3.5.A} are either theorems of $\HKbiG$ or formulas whose main connective is $\triangle$, there is a homomorphism $h$ that sends the premises of \eqref{equ:3.5.A} to $1$ and the conclusion to a lesser value. We show that $h$ satisfies the conditions of the statement. Indeed, $\mathbf{C1}$ is obtained immediately since $\Th(\HKbiG)$ is closed under $\triangle$. To see that $\mathbf{C2}$ holds, we note that $h(\triangle({}^*\Box^{=1}_u))=\{1\}$, whence $h({}^*\Box^{=1}_u)=\{1\}$ too.

Since $h$ refutes the conclusion of~\eqref{equ:3.5.A}, we have that $h\left(\bigvee\lozenge^{<1}_u\right)<h\left(\phi\rightarrow\bigvee\lozenge^{<1}_u\right)$. Hence, $h$ satisfies~$\mathbf{C3}$. Finally, $h(\triangle\delta)=1$ entails that $h(\delta)=1$. But one can see that $h(\phi)\leq h\left(\bigvee\lozenge^{<1}_u\right)$, whence $\mathbf{C4}$ also holds w.r.t.\ $h$ by Proposition~\ref{prop:boxwrappingremark}.

We consider (B). We assume that
\begin{align}
\Th(\HKbiG),\triangle({}^*\Box^{=1}_u),\delta,\delta\rightarrow\left(\bigvee\lozenge^{<1}_u\rightarrow\phi\right)\models_{\biG}\phi
\label{equ:3.5.B}
\end{align}
and reason for contradiction. For this, we apply Proposition~\ref{prop:propositionalreduction}, strong completeness of $\HGtriangle$ w.r.t.\ $\biG$, and $\MBox$ to obtain
\begin{align*}
\Th(\HKbiG),\Box\triangle({}^*\Box^{=1}_u),\Box\delta,\Box\left(\delta\rightarrow\left(\bigvee\lozenge^{<1}_u\rightarrow\phi\right)\right)\models_{\biG}\Box\phi
\end{align*}
Now, we apply $\mathbf{Cr}$ which gives us
\begin{align*}
\Th(\HKbiG),\triangle\Box({}^*\Box^{=1}_u),\Box\delta,\Box\left(\delta\rightarrow\left(\bigvee\lozenge^{<1}_u\rightarrow\phi\right)\right)\models_{\biG}\Box\phi
\end{align*}
Again, we can refute this entailment with $u$. Since $u\in W^\tau$, $u(\Th(\HKbiG))=\{1\}$. Furthermore, $u(\triangle\Box({}^*\Box^{=1}_u))=1$ since $u(\Box({}^*\Box^{=1}_u))=1$ by definition of $u$, and $u(\Box\delta)=1$ by Proposition~\ref{prop:boxwrapping}, and $u\left(\Box\left(\delta\rightarrow\left(\bigvee\lozenge^{<1}_u\rightarrow\phi\right)\right)\right)=1$ by assumption (B). On the contrary, $u(\Box\phi)<\alpha$.

Thus, there exists a homomorphism $h$ that evaluates the premises of \eqref{equ:3.5.B} at $1$\footnote{Note that theorems are closed under $\triangle$, that $h\left(\delta\rightarrow\left(\bigvee\lozenge^{<1}_u\rightarrow\phi\right)\right),h(\delta)\in\{1,h(\phi)\}$.} and $\phi$ at a~lesser value.

It remains to show that $h$ satisfies $\mathbf{C1}$--$\mathbf{C4}$. Indeed, $\mathbf{C1}$ and $\mathbf{C2}$ hold because the premises of \eqref{equ:3.5.B} are sent to $1$. Furthermore, by the same reason, we have that $h\left(\bigvee\lozenge^{<1}_u\right)<1$ ($\mathbf{C3}$). Finally, since $h(\phi)<h(\delta)=1$, we obtain $\mathbf{C4}$ via an application of Proposition~\ref{prop:boxwrappingremark}.
\end{proof}

\begin{remark}\label{rem:whatisnewBox}
Let us return to the proof. Observe that it is crucial to use $\triangle\delta$ and $\triangle({}^*\Box^{=1}_u)$ and not $\delta$ and ${}^*\Box^{=1}_u$ in the premises of~\eqref{equ:3.5.A}. Indeed, if use the $\triangle$-less versions, then
\begin{align*}
\Th(\HKbiG),{}^*\Box^{=1}_u,\delta\not\models_{\biG}\left(\phi\rightarrow\bigvee\lozenge^{<1}_u\right)\rightarrow\bigvee\lozenge^{<1}_u
\end{align*}
does not guarantee the existence of $h$ s.t.\ $h(\delta)=1$ (which is necessary to establish $\mathbf{C4}$) and $h(\psi)=1$ ($\mathbf{C2}$) for every $\psi\in{}^*\Box^{=1}_u$.
\end{remark}

Next, we prove the counterpart of Proposition~4.7 in~\cite{RodriguezVidal2021}. Again, we will not be able to mimic it step-by-step since $\biG$ is not definable as the preservation of $1$. On the other hand, we need to fail the entailment in such a way that the premises are evaluated at $1$. Thus, we need a stronger version of~\cite[Lemma~4.6]{RodriguezVidal2021}. Our next proposition serves exactly this goal.
\begin{proposition}\label{prop:diamondwrapping}
Let $\phi\in\lozenge^{=\alpha}_u$ for some $\alpha>0$ and set
\begin{align*}
\delta'&\coloneqq\left(\phi\rightarrow\bigvee\lozenge^{<\alpha}_u\right)\rightarrow\bigvee\lozenge^{<\alpha}_u
\end{align*}
Then $u(\lozenge\triangle\delta')=1$.
\end{proposition}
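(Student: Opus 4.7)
My plan is to mimic the strategy of proposition~\ref{prop:boxwrapping} on the diamond side, where the essential new ingredient is the axiom ${\sim}\triangle\lozenge$: that axiom is precisely designed to convert a strict inequality of the form $u(\lozenge V) < u(\lozenge\phi)$ into the existence (witnessed by $\lozenge$) of a $\{0,1\}$-valued formula that evaluates to $1$ at some successor.

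First, abbreviate $V\coloneqq\bigvee\lozenge^{<\alpha}_u$. Since $\lozenge^{<\alpha}_u\subseteq\Sfconst(\tau)$ is finite, axiom $\mathbf{K}$ for $\lozenge$ together with the $\lozenge$-monotonicity rule coming from $\mathbf{nec}$ yields the theorem $\lozenge V\leftrightarrow\bigvee\{\lozenge\psi:\psi\in\lozenge^{<\alpha}_u\}$. Because $u$ sends every $\HKbiG$-theorem to $1$, this forces $u(\lozenge V)=\max\{u(\lozenge\psi):\psi\in\lozenge^{<\alpha}_u\}$, and each such value is $<\alpha$ by definition. The edge case $\lozenge^{<\alpha}_u=\varnothing$ (where $V=\mathbf{0}$) is handled by the $\mathbf{0}$ axiom ${\sim}\lozenge\mathbf{0}$, which yields $u(\lozenge V)=0<\alpha$ using $\alpha>0$. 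In all cases $u(\lozenge V)<\alpha=u(\lozenge\phi)$, so $u(\lozenge\phi\to\lozenge V)<1$ and hence $u({\sim}\triangle(\lozenge\phi\to\lozenge V))=1$. Instantiating the ${\sim}\triangle\lozenge$ axiom with $\chi\coloneqq V$ then gives $u(\lozenge{\sim}\triangle(\phi\to V))=1$.

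To finish, I would verify that the propositional implication ${\sim}\triangle(\phi\to V)\to\triangle\delta'$ is $\biG$-valid. For a $\biG$-valuation $v$: if $v(\phi)\leq v(V)$, the antecedent is $0$ and the implication is trivially $1$; if $v(\phi)>v(V)$, then $v(\phi\to V)=v(V)$ and hence $v(\delta')=v(V)\to v(V)=1$, so $v(\triangle\delta')=1$. By proposition~\ref{prop:HbiGcompleteness} this implication is derivable in $\HGtriangle$, and then the second form of $\mathbf{nec}$ promotes it to $\HKbiG\vdash\lozenge{\sim}\triangle(\phi\to V)\to\lozenge\triangle\delta'$. Since $u$ sends every $\HKbiG$-theorem to $1$, combining with the previous paragraph gives $u(\lozenge\triangle\delta')=1$, as required.

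The main obstacle is simply identifying the right propositional tautology in the last step; once that is in hand, the ${\sim}\triangle\lozenge$ axiom together with $\lozenge$-necessitation does the rest of the work. Note that the hypothesis $\alpha>0$ enters only in the edge case to prevent $V=\mathbf{0}$ from destroying the strict inequality, mirroring the role played by the dual assumption $\alpha<1$ in proposition~\ref{prop:boxwrapping}.
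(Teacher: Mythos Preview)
Your argument is correct and follows essentially the same route as the paper's proof: establish $u(\lozenge V)<u(\lozenge\phi)$, deduce $u({\sim}\triangle(\lozenge\phi\to\lozenge V))=1$, apply the ${\sim}\triangle\lozenge$ axiom to obtain $u(\lozenge{\sim}\triangle(\phi\to V))=1$, and then use the $\HGtriangle$-theorem ${\sim}\triangle(\phi\to V)\to\triangle\delta'$ together with $\lozenge$-monotonicity. Your treatment is in fact a bit more careful than the paper's---you explicitly handle the edge case $\lozenge^{<\alpha}_u=\varnothing$ and verify the propositional step semantically---but the strategy is identical.
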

\begin{proof}
Note that $u(\lozenge\phi)\!>\!u\left(\bigvee\lozenge\lozenge^{<\alpha}_u\right)$ by definition. Thus we have $u\left(\lozenge\phi\!\rightarrow\!\lozenge\bigvee\lozenge^{<\alpha}_u\right)\!<\!1$ by $\mathbf{K}$, whence $u\left({\sim}\triangle\left(\lozenge\phi\!\rightarrow\!\lozenge\bigvee\lozenge^{<\alpha}_u\right)\right)\!=\!1$. Now, we use ${\sim}\triangle\lozenge$ axiom to obtain that $u(\lozenge{\sim}\triangle(\phi\rightarrow\bigvee\lozenge^{<\alpha}_u))=1$. From here, since $\HGtriangle\vdash{\sim}\triangle(\phi\rightarrow\bigvee\lozenge^{<\alpha}_u)\rightarrow\triangle((\phi\rightarrow\bigvee\lozenge^{<\alpha}_u)\rightarrow\bigvee\lozenge^{<\alpha}_u)$, we obtain
$u(\lozenge\triangle\delta')\!=\!1$ by an application of $\mathbf{nec}$ and $\mathbf{K}$, as required.
\end{proof}

We are now ready to prove the $\lozenge$ counterpart of Proposition~\ref{prop:homomorphismexistencebox}.
\begin{proposition}\label{prop:homomorphismexistencediamond}
For any $\alpha>0$ and $\phi\in\lozenge^{=\alpha}_u$, there exists a~propositional homomorphism $h:\bimodalLtriangle\rightarrow[0,1]_{\biG}$, s.t.:
\begin{description}
\item[C1:] $h(\chi)=1$ for any $\chi\in\Th(\HKbiG)$;
\item[C2:] $h(\psi)=1$ for every $\psi\in{}^*\Box^{=1}_u$;
\item[C3:] $h(\rho)<1$ for every $\rho\in\lozenge^{<1}_u$;
\item[$\mathbf{C4}'$:] $h(\phi)>h(\sigma)$ for every $\sigma\in\lozenge^{<\alpha}_u$.
\end{description}
\end{proposition}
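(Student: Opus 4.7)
The strategy mirrors that of Proposition~\ref{prop:homomorphismexistencebox}, but dualised to the $\lozenge$-side and driven by Proposition~\ref{prop:diamondwrapping} in place of a branching $\biG$-tautology. My plan is to identify a single failing $\biG$-entailment whose countermodel homomorphism will be $h$; crucially, no case split (A)/(B) is needed this time, because Proposition~\ref{prop:diamondwrapping} already supplies the unambiguous witness $u(\lozenge\triangle\delta')=1$.

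Concretely, the target non-entailment is
\[\Th(\HKbiG),\,\triangle({}^*\Box^{=1}_u),\,\triangle\delta'\not\models_{\biG}\bigvee\lozenge^{<1}_u.\]
I would argue by contradiction. Suppose the entailment holds. By strong completeness of $\HGtriangle$ (Proposition~\ref{prop:HbiGcompleteness}) and Proposition~\ref{prop:propositionalreduction}(1) this yields a $\HKbiG$-derivation; then the $\triangle$ deduction theorem (Proposition~\ref{prop:propositionalreduction}(2)) combined with $\HGtriangle\vdash\triangle\triangle\delta'\leftrightarrow\triangle\delta'$ produces
\[\triangle({}^*\Box^{=1}_u)\vdash_{\HKbiG}\triangle\delta'\rightarrow\bigvee\lozenge^{<1}_u.\]
Applying $\MBox$, then the derived theorem $\mathsf{P}:\Box(\phi\rightarrow\chi)\rightarrow(\lozenge\phi\rightarrow\lozenge\chi)$, and the $\lozenge$-side of $\mathbf{K}$, I would obtain
\[\Box\triangle({}^*\Box^{=1}_u)\vdash_{\HKbiG}\lozenge\triangle\delta'\rightarrow\bigvee\lozenge\lozenge^{<1}_u,\]
and weaken the premise via $\mathbf{Cr}$ (the scheme $\triangle\Box\phi\rightarrow\Box\triangle\phi$) so that the left-hand side becomes $\triangle\Box({}^*\Box^{=1}_u)$. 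Converting back to a $\biG$-entailment via Proposition~\ref{prop:propositionalreduction}(1) and Proposition~\ref{prop:HbiGcompleteness}, and evaluating at $u$: every premise gets value $1$ (theorems trivially, and $u(\triangle\Box\psi)=1$ for $\psi\in{}^*\Box^{=1}_u$ directly from the definition of ${}^*\Box^{=1}_u$), the antecedent $\lozenge\triangle\delta'$ is $1$ by Proposition~\ref{prop:diamondwrapping}, whereas $u(\bigvee\lozenge\lozenge^{<1}_u)<1$ by the very definition of $\lozenge^{<1}_u$. This is the sought contradiction.

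The non-entailment then furnishes a homomorphism $h$ that sends every premise to $1$ and $\bigvee\lozenge^{<1}_u$ strictly below $1$. Conditions $\mathbf{C1}$, $\mathbf{C2}$, and $\mathbf{C3}$ follow exactly as in Proposition~\ref{prop:homomorphismexistencebox}: $\mathbf{C1}$ by hypothesis, $\mathbf{C2}$ because $h(\triangle\psi)=1$ forces $h(\psi)=1$, and $\mathbf{C3}$ because $h(\rho)\leq h(\bigvee\lozenge^{<1}_u)<1$. For $\mathbf{C4}'$, note that $h(\triangle\delta')=1$ gives $h(\delta')=1$; since $\lozenge^{<\alpha}_u\subseteq\lozenge^{<1}_u$ we also have $h(\bigvee\lozenge^{<\alpha}_u)<1$. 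A direct inspection of the two possible values of the G\"odel implication $h(\phi\rightarrow\bigvee\lozenge^{<\alpha}_u)$ in the definition of $\delta'$ then rules out the case $h(\phi)\leq h(\bigvee\lozenge^{<\alpha}_u)$ (which would force $h(\delta')=h(\bigvee\lozenge^{<\alpha}_u)<1$), so $h(\phi)>h(\bigvee\lozenge^{<\alpha}_u)\geq h(\sigma)$ for every $\sigma\in\lozenge^{<\alpha}_u$.

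The main obstacle is the same bookkeeping exercise as in the $\Box$-case: choosing the target entailment carefully so that, after the sequence $\MBox\to\mathsf{P}\to\mathbf{K}\to\mathbf{Cr}$, every surviving premise is evaluable at $u$ --- in particular, the strategic use of $\mathbf{Cr}$ to swap $\Box\triangle$ for $\triangle\Box$ is what aligns the premises with the definition of ${}^*\Box^{=1}_u$. The only genuinely new ingredient is Proposition~\ref{prop:diamondwrapping}, whose reliance on the axiom ${\sim}\triangle\lozenge$ is precisely what makes the $\lozenge$-witness $u(\lozenge\triangle\delta')=1$ available in the first place.
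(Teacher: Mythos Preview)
Your proposal is correct and follows essentially the same route as the paper: the same target non-entailment $\Th(\HKbiG),\triangle({}^*\Box^{=1}_u),\triangle\delta'\not\models_{\biG}\bigvee\lozenge^{<1}_u$ is established by the same chain $\MBox\to\mathsf{P}\to\mathbf{K}\to\mathbf{Cr}$ and refuted at $u$ via Proposition~\ref{prop:diamondwrapping}. Your verification of $\mathbf{C4}'$ is in fact more explicit than the paper's, which simply asserts that $h(\delta')=1$ suffices.
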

\begin{proof}
We assume
\begin{align}
\Th(\HKbiG),\triangle({}^*\Box^{=1}_u),\triangle\delta'\models_{\biG}\bigvee\lozenge^{<1}_u
\label{equ:3.7}
\end{align}
and reason for contradiction. Again, we use the completeness of $\HGtriangle$, Proposition~\ref{prop:propositionalreduction}, $\mathbf{K}$, $\MBox$, $\mathsf{P}$, and $\mathbf{Cr}$ to arrive at
\begin{align*}
\Th(\HKbiG),\triangle\Box({}^*\Box^{=1}_u)\models_{\biG}\lozenge\triangle\delta'\rightarrow\bigvee\lozenge\lozenge^{<1}_u
\end{align*}
It is easy to see that $u$ refutes this entailment: $u(\Th(\HKbiG))=1$ since $u\in W^\tau$, $u(\Box^*\Box^{=1}_u)=1$ by definition, whence $u(\triangle\Box^*\Box^{=1}_u)=1$, and $u(\lozenge\triangle\delta')=1$ by Proposition~\ref{prop:diamondwrapping} but $u\left(\bigvee\lozenge\lozenge^{<1}_u\right)<1$ by definition.

Thus, there exists a homomorphism $h$ that sends the premises of~\eqref{equ:3.7} to $1$\footnote{Again, observe that $\Th(\HKbiG)$ is closed under $\triangle$ and all other premises have $\triangle$ as their main connective.} and its conclusion to a~lesser value. Hence, $h$ satisfies $\mathbf{C1}$ and $\mathbf{C2}$. Furthermore, $h\left(\bigvee\lozenge^{<1}_u\right)<1$, and thus, $\mathbf{C3}$ is satisfied. Finally, since $h(\triangle\delta')=1$, we have that $h(\delta')=1$, whence $\mathbf{C4}'$ is satisfied as well.
\end{proof}
\begin{remark}\label{rem:whatisnewlozenge}
Again, the considerations of Remark~\ref{rem:whatisnewBox} apply for Proposition~\ref{prop:homomorphismexistencebox} as well. Since we need $h(\delta')=1$ to establish $\mathbf{C4}'$, we cannot use $\delta'$ by itself in the premise of~\eqref{equ:3.7} as the failure of the entailment does not (in general) guarantee that the premises are evaluated at $1$.
\end{remark}
\begin{remark}\label{rem:homomorphismfurtherproperties}
It is clear that since $h(\Box\mathbf{1})=1$, every homomorphism $h$ satisfying the conditions of Proposition~\ref{prop:homomorphismexistencebox} satisfies
\begin{description}
\item[C4.1:] $h(\phi)<1$.
\end{description}
Furthermore, from $h(\lozenge\mathbf{0})=0$, it follows that for every $h$ that satisfies the conditions of Proposition~\ref{prop:homomorphismexistencediamond}, it holds that
\begin{description}
\item[$\mathbf{C4'.1}$:] $h(\phi)>0.$
\end{description}
Finally, if $\mathbf{C1}$--$\mathbf{C3}$ are true for $h$, then the following properties hold for all $\theta,\theta'\in\bimodalLtriangle$.
\begin{description}
\item[$\mathbf{C2.a}$:] If $u(\lozenge\theta)\leq u(\Box\theta')$, then $h(\theta)\leq h(\theta')$ since $\theta\rightarrow\theta'\in{}^*\Box^{=1}_u$ using $\mathbf{FS}$.
\item[$\mathbf{C2.b}$:] If $\theta\in\Sfconst(\tau)$ and $u(\lozenge\theta)<u(\Box\theta')$, then $h(\theta)<h(\theta')$. For
\[\HKbiG\vdash((\Box\theta'\rightarrow\lozenge\theta)\rightarrow\lozenge\theta)\rightarrow(\Box((\theta'\rightarrow\theta)\rightarrow\theta)\vee\lozenge\theta)\]
$u(\lozenge\theta)<1$, and $u(\Box\theta'\rightarrow\lozenge\theta)\rightarrow\lozenge\theta)=1$ imply that $(\theta'\rightarrow\theta)\rightarrow\theta\in{}^*\Box^{=1}_u$ and $\mathbf{C3}$ implies $h(\theta)<1$.
\item[$\mathbf{C2.c}$:] If $u(\Box\theta)>0$, then $h(\theta)>0$.
\item[$\mathbf{C2.d}$:] If $u(\lozenge\theta)=0$, then $h(\theta)=0$.
\end{description}
\end{remark}

We can now establish the next statement which is analogous to propositions~4.5 and~4.8 in~\cite{RodriguezVidal2021} using propositions~\ref{prop:homomorphismexistencebox} and~\ref{prop:homomorphismexistencediamond} as well as remark~\ref{rem:homomorphismfurtherproperties}. The proof is exactly the same as in the original version.
\begin{proposition}\label{prop:limits}~
\begin{enumerate}
\item For any $\phi\in\Box^{=\alpha}_u$, $\alpha<1$, and $\varepsilon>0$ there is $u'\in W^\tau$ s.t.\ $uR^\tau u'$ and $u'(\phi)\in[\alpha,\alpha+\varepsilon]$.
\item For any $\phi\in\lozenge^{=\alpha}_u$, $\alpha>0$, and $\varepsilon>0$ there is $u'\in W^\tau$ s.t.\ $uR^\tau u'$ and $u'(\phi)\in[\alpha-\varepsilon,\alpha]$.
\end{enumerate}
\end{proposition}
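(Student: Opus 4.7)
The plan is to adapt the technique of Caicedo, Metcalfe, Rodr\'{i}guez, and Rogger to our setting. The key observation is that the value of any formula under a G\"{o}del homomorphism depends only on the \emph{order} of the values of its subformulas together with $0$ and $1$; consequently, if $h:\bimodalLtriangle\rightarrow[0,1]_{\biG}$ is a homomorphism and $f:[0,1]\rightarrow[0,1]$ is any strictly increasing bijection fixing $0$ and $1$, then $f\circ h$ is again a homomorphism. Since $f(1)=1$, every theorem of $\HKbiG$ is still evaluated at $1$, so $f\circ h\in W^\tau$.

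For $1$, apply Proposition~\ref{prop:homomorphismexistencebox} to $\phi\in\Box^{=\alpha}_u$ to obtain a homomorphism $h$ satisfying $\mathbf{C1}$--$\mathbf{C4}$, further enhanced by the properties of Remark~\ref{rem:homomorphismfurtherproperties}. Set $u'\coloneqq f\circ h$ for a suitable piecewise-linear $f$. To choose $f$, enumerate the finite set of critical values $\{u(\Box\psi),u(\psi),u(\lozenge\psi):\psi\in\Sfconst(\tau)\}\cup\{\alpha,\alpha+\varepsilon\}$ and use $\mathbf{C2.a}$--$\mathbf{C2.d}$ to check that the ordering on $\{h(\psi),h(\lozenge\psi):\psi\in\Sfconst(\tau)\}$ is compatible with the target inequalities $f(h(\psi))\geq u(\Box\psi)$ and $f(h(\lozenge\psi))\geq u(\psi)$ needed for $uR^\tau u'$. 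Then insert the extra constraint $f(h(\phi))\in[\alpha,\alpha+\varepsilon]$, which is consistent with $\mathbf{C4}$ and $\mathbf{C4.1}$ because $h(\phi)<1$ and $h(\phi)<h(\sigma)$ for every $\sigma\in\Box^{>\alpha}_u$. A routine piecewise-linear interpolation now produces the required $f$, and $u'=f\circ h$ satisfies all conditions.

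Part $2$ is dual: invoke Proposition~\ref{prop:homomorphismexistencediamond} to obtain $h$ satisfying $\mathbf{C1}$--$\mathbf{C3}$ and $\mathbf{C4}'$ (augmented with $\mathbf{C4'.1}$: $h(\phi)>0$), and construct an analogous $f$ such that $f(h(\phi))\in[\alpha-\varepsilon,\alpha]$ while preserving the same $R^\tau$-compatibility constraints. Here $\mathbf{C4}'$ ensures $h(\phi)>h(\sigma)$ for every $\sigma\in\lozenge^{<\alpha}_u$, giving the slack needed to place $f(h(\phi))$ strictly above the $f$-images of those $h(\sigma)$'s while remaining $\leq\alpha$.

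The main obstacle is verifying that the $R^\tau$-compatibility constraints from $\mathbf{C2.a}$--$\mathbf{C2.d}$ (forcing $f$-images of certain $h$-values to lie above prescribed thresholds) are jointly consistent with the target window for $f(h(\phi))$. This amounts to a careful bookkeeping of the finitely many ordering conditions among the critical values, which is exactly what $\mathbf{C4}$ (respectively $\mathbf{C4}'$) was engineered to deliver in Propositions~\ref{prop:homomorphismexistencebox} and~\ref{prop:homomorphismexistencediamond}; hence the construction goes through verbatim as in~\cite{RodriguezVidal2021}.
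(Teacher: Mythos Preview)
Your proposal is correct and takes essentially the same approach the paper defers to (Propositions~4.5 and~4.8 of~\cite{RodriguezVidal2021}): compose the homomorphism $h$ obtained from Proposition~\ref{prop:homomorphismexistencebox} (resp.\ \ref{prop:homomorphismexistencediamond}) with a strictly increasing bijection $f$ of $[0,1]$ fixing the endpoints, using the finitely many ordering constraints supplied by $\mathbf{C2.a}$--$\mathbf{C2.d}$ together with $\mathbf{C4}$ (resp.\ $\mathbf{C4}'$) to verify that such an $f$ exists. One minor slip to watch: the second $R^\tau$-condition should be $u'(\psi)\leq u(\lozenge\psi)$, so the target inequality you need is $f(h(\psi))\leq u(\lozenge\psi)$ rather than $f(h(\lozenge\psi))\geq u(\psi)$, but this does not affect the argument.
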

The truth lemma can be established using Proposition~\ref{prop:limits} that guarantees that for every value $\alpha$ of $\Box\phi$ or $\lozenge\phi$, one can find an accessible state where the value of $\phi$ is arbitrarily close to $\alpha$. Thus, $\Box\phi$ will be indeed evaluated as the infimum and $\lozenge\phi$ as the supremum of $\phi$'s values in the accessible states. Again, the proof can be conducted in the same manner as in~\cite{RodriguezVidal2021}.
\begin{proposition}[Truth lemma]\label{prop:truthlemma}
For any $\phi\in\Sfconst(\tau)$, it holds that $e^\tau(\phi,u)=u(\phi)$.
\end{proposition}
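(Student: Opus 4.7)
The plan is to proceed by induction on the structure of $\phi\in\Sfconst(\tau)$. The base case $\phi=p$ is immediate from the definition $v^\tau(p,u)=u(p)$, and the inductive cases for the propositional connectives $\sim,\triangle,\wedge,\vee,\rightarrow$ are routine: both $u\colon\bimodalLtriangle\to[0,1]_{\biG}$ (by membership in $W^\tau$) and the state-wise valuation $v^\tau(\cdot,u)$ (by the interpretation clauses of definition~\ref{def:KbiGsemantics}) are $\Gtriangle$-homomorphisms, so the identity $v^\tau(\phi,u)=u(\phi)$ propagates through these connectives by applying the inductive hypothesis to the immediate subformulas and then the shared homomorphism property.

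The substantive cases are the two modal ones. For $\phi=\Box\phi'$, IH and the interpretation of $\Box$ on the crisp frame $\langle W^\tau,R^\tau\rangle$ give $v^\tau(\Box\phi',u)=\inf\{u'(\phi'):uR^\tau u'\}$, and I must show this equals $u(\Box\phi')$. The inequality $u(\Box\phi')\leq v^\tau(\Box\phi',u)$ is read off directly from the first defining clause $u(\Box\psi)\leq u'(\psi)$ of $R^\tau$ instantiated at $\psi=\phi'\in\Sfconst(\tau)$. The converse $v^\tau(\Box\phi',u)\leq u(\Box\phi')$ is vacuous when $u(\Box\phi')=1$; when $u(\Box\phi')=\alpha<1$, proposition~\ref{prop:limits}(1) supplies, for every $\varepsilon>0$, a successor $u'$ with $u'(\phi')\in[\alpha,\alpha+\varepsilon]$, whence the infimum is $\leq\alpha+\varepsilon$ for every $\varepsilon>0$, and so $\leq\alpha$. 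The case $\phi=\lozenge\phi'$ is dual: IH gives $v^\tau(\lozenge\phi',u)=\sup\{u'(\phi'):uR^\tau u'\}$, the inequality $u(\lozenge\phi')\leq v^\tau(\lozenge\phi',u)$ follows by the analogous $\varepsilon$-argument from below via proposition~\ref{prop:limits}(2) (trivial when $u(\lozenge\phi')=0$), and the reverse inequality requires that every $R^\tau$-successor $u'$ satisfies $u'(\phi')\leq u(\lozenge\phi')$; this bound is extracted from the axioms $\mathbf{K}$, $\mathbf{FS}$, and $\mathbf{Cr}$ via the comparative inequalities $\mathbf{C2.a}$--$\mathbf{C2.d}$ recorded in remark~\ref{rem:homomorphismfurtherproperties}, which constrain any $u'\in W^\tau$ meeting the defining clauses of $R^\tau$.

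I expect the main obstacle to be precisely this last bound for the $\lozenge$ case: unlike the matching $\Box$ inequality, it does not fall out of unpacking a single clause of $R^\tau$, and the derivation must weave together the $\lozenge$-axioms with the clause $u(\psi)\leq u'(\lozenge\psi)$ of the definition while scrupulously avoiding any appeal to a classical deduction theorem or to the $1$-preservation versus order-preservation equivalence available for $\KG$ but not for $\biG$; only the $\triangle$ deduction theorem from proposition~\ref{prop:propositionalreduction} is at hand. Beyond this, the proof is mostly book-keeping: the heavy lifting has already been done in propositions~\ref{prop:boxwrapping}--\ref{prop:limits}, and the content of the truth lemma is just that the approximating successors they produce, together with the hard-wired inequalities built into $R^\tau$, suffice to realise the infimum and supremum exactly.
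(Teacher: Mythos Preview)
Your inductive scheme is exactly the one the paper intends (it defers to~\cite{RodriguezVidal2021}), and your handling of the propositional connectives and of the $\Box$ case is correct.

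The one place you overcomplicate matters is the upper bound in the $\lozenge$ case. You are right that the paper's printed second clause of $R^\tau$, namely $u(\psi)\leq u'(\lozenge\psi)$, does not deliver $u'(\phi')\leq u(\lozenge\phi')$; but this is a typo. The intended clause (and the one in~\cite{RodriguezVidal2021}) is $u'(\psi)\leq u(\lozenge\psi)$ for all $\psi\in\Sfconst(\tau)$, which makes the $\lozenge$ upper bound immediate and exactly dual to your $\Box$ lower bound. With the corrected clause, proposition~\ref{prop:limits} still stands, since the normalised successors built there satisfy both $u(\Box\psi)\leq u'(\psi)$ and $u'(\psi)\leq u(\lozenge\psi)$ on $\Sfconst(\tau)$.

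Your proposed detour through $\mathbf{C2.a}$--$\mathbf{C2.d}$ does not work as stated: those items are derived from $\mathbf{C1}$--$\mathbf{C3}$, and $\mathbf{C2}$ asks that $u'(\psi)=1$ for every $\psi$ with $u(\Box\psi)=1$, where ${}^*\Box^{=1}_u$ ranges over \emph{all} formulas, not just $\Sfconst(\tau)$. The $\Box$-clause of $R^\tau$ is quantified only over $\Sfconst(\tau)$, so an arbitrary $R^\tau$-successor need not satisfy $\mathbf{C2}$, and remark~\ref{rem:homomorphismfurtherproperties} is not available for it. Once the typo is fixed there is nothing left to do in this direction.
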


Now, weak completeness will follow from the truth lemma and the validity of axioms and rules.
\begin{theorem}\label{theorem:KbiGcompleteness}
$\HKbiG$ is weakly complete: for any $\phi\in\bimodalLtriangle$, it holds that $\models_{\KbiG}\phi$ iff $\HKbiG\vdash\phi$.
\end{theorem}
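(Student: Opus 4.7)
The plan is to prove the two directions separately, with soundness being routine and completeness following by contrapositive from the canonical model machinery already developed.

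For soundness, I would verify that every axiom of $\HKbiG$ is valid on every crisp frame and that every rule preserves validity. The propositional axioms and $\mathbf{MP}$/$\triangle\mathbf{nec}$ follow directly from Proposition~\ref{prop:HbiGcompleteness} applied state-wise. The axiom $\mathbf{0}$ is immediate from $\inf(\varnothing) = 1$ and $\sup(\varnothing) = 0$. $\mathbf{K}$, $\mathbf{FS}$, and $\mathsf{P}$ are standard for $\KG^c$; $\mathbf{Cr}$ encodes crispness (for the $\triangle\Box$ axiom one uses that $wRw'$ is $0$ or $1$, so $v(\Box\phi,w)=1$ forces $v(\phi,w')=1$ on each successor); the admissibility of $\mathbf{nec}$ is routine. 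The axiom ${\sim}\triangle\lozenge$ reflects the fact that on crisp frames, if $\sup_{w'\in R(w)} v(\phi,w') > \sup_{w'\in R(w)} v(\chi,w')$, then some single successor $w'$ already witnesses $v(\phi,w') > v(\chi,w')$, which is exactly the supremum-to-maximum transfer needed.

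For completeness, I would argue by contrapositive. Suppose $\HKbiG \not\vdash \phi$; take $\tau = \phi$ and build the canonical model $\mathfrak{M}^\phi$. The goal is to produce a state $u^\star \in W^\phi$ with $u^\star(\phi) < 1$; then by the truth lemma (Proposition~\ref{prop:truthlemma}), $v^\phi(\phi, u^\star) = u^\star(\phi) < 1$, so $\phi$ is refuted on the crisp frame $\langle W^\phi, R^\phi\rangle$. To obtain $u^\star$, I would invoke Proposition~\ref{prop:propositionalreduction}(1) to conclude $\Th(\HKbiG) \not\vdash_{\HGtriangle} \phi$, and then apply the strong completeness of $\HGtriangle$ (Proposition~\ref{prop:HbiGcompleteness}) to produce a $\biG$-homomorphism $u^\star : \bimodalLtriangle \to [0,1]_\biG$ with $u^\star(\chi) = 1$ for every $\chi \in \Th(\HKbiG)$ and $u^\star(\phi) < 1$. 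This $u^\star$ satisfies the definition of $W^\phi$ by construction.

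The only subtlety I anticipate, and what I would single out as the main technical point of the argument, is confirming that the canonical frame $\langle W^\phi, R^\phi\rangle$ really is a crisp $\KbiG$ frame in the sense of Definition~\ref{def:frames}: $R^\phi$ is defined as a $\{0,1\}$-valued (set-theoretic) relation, so crispness is immediate, but one should briefly note that the truth lemma was established for this very interpretation of $\Box$ and $\lozenge$ via $\inf$ and $\sup$ over $R^\phi(u)$, which is exactly the crisp clause of Definition~\ref{def:KbiGsemantics}. Once that alignment is noted, the refutation of $\phi$ at $u^\star$ is immediate and the theorem follows.
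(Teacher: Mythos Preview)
Your proposal is correct and follows essentially the same approach as the paper: the paper simply states that weak completeness follows from the truth lemma (Proposition~\ref{prop:truthlemma}) together with the validity of the axioms and rules, and you have filled in exactly those details---soundness by direct verification, and completeness by producing a refuting homomorphism $u^\star\in W^\phi$ via Propositions~\ref{prop:propositionalreduction}(1) and~\ref{prop:HbiGcompleteness} and then invoking the truth lemma. The only minor imprecision is that the validity of $\mathbf{0}$ (${\sim}\lozenge\mathbf{0}$) does not hinge on $\inf(\varnothing)=1$ but simply on $v(\mathbf{0},w')=0$ everywhere; this is cosmetic and does not affect the argument.
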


The strong completeness is a bit more complicated.
\begin{theorem}\label{theorem:KbiGstrongcompleteness}
$\HKbiG$ is strongly complete: for any $\Gamma\cup\{\phi\}\subseteq\bimodalLtriangle$, it holds that $\Gamma\models_{\KbiG}\phi$ iff $\Gamma\vdash_{\HKbiG}\phi$.
\end{theorem}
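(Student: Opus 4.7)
My plan is to strengthen the canonical-model construction used in the weak completeness theorem. The soundness direction $\Gamma \vdash_{\HKbiG} \phi \Rightarrow \Gamma \models_{\KbiG} \phi$ follows by a routine induction on derivations: every axiom of $\HKbiG$ is $\KbiG$-valid (verified in the proof of Theorem~\ref{theorem:KbiGcompleteness}), modus ponens preserves $\inf$-entailment in $\biG$ by Proposition~\ref{prop:HbiGcompleteness}, and the necessitation rules apply only to theorems, hence preserve entailment from any set of assumptions.

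For the completeness direction, assume $\Gamma \not\vdash_{\HKbiG} \phi$. By Proposition~\ref{prop:propositionalreduction}, $\Gamma \cup \Th(\HKbiG) \not\vdash_{\HGtriangle} \phi$, and by strong completeness of $\HGtriangle$ there is a $\biG$-homomorphism $v_0 \colon \bimodalLtriangle \to [0,1]_\biG$ with $\inf\{v_0(\chi) : \chi \in \Gamma \cup \Th(\HKbiG)\} > v_0(\phi)$. I next observe that $v_0$ sends every theorem of $\HKbiG$ to $1$, hence $v_0 \in W^\tau$ for every $\tau$: because $\Th(\HKbiG)$ is closed under $\triangle$-necessitation, $\psi \in \Th(\HKbiG)$ implies $\triangle\psi \in \Th(\HKbiG)$, and the inequality $v_0(\triangle\psi) > v_0(\phi) \geq 0$ together with $v_0(\triangle\psi) \in \{0,1\}$ forces $v_0(\triangle\psi) = 1$, hence $v_0(\psi) = 1$.

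For finite $\Gamma$ the refutation is then immediate: I take $\tau = \phi \wedge \bigwedge\Gamma$, build $\mathfrak{M}^\tau$ exactly as in the proof of Theorem~\ref{theorem:KbiGcompleteness}, and apply the truth lemma (Proposition~\ref{prop:truthlemma}) at the world $v_0 \in W^\tau$. This yields $v^\tau(\gamma,v_0) = v_0(\gamma)$ for every $\gamma \in \Gamma$ and $v^\tau(\phi,v_0) = v_0(\phi)$, so $\inf_\gamma v^\tau(\gamma,v_0) > v^\tau(\phi,v_0)$ inside the single model $\mathfrak{M}^\tau$, which refutes $\Gamma \models_{\KbiG} \phi$.

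The main obstacle is the case of infinite $\Gamma$, because no single $\tau$ can accommodate all $\gamma \in \Gamma$ as subformulas while keeping $\Sfconst(\tau)$ finite, and this finiteness is exploited crucially in Proposition~\ref{prop:boxwrapping} to ensure that $\bigwedge\Box^{>\alpha}_u$ is a formula of $\bimodalLtriangle$. To overcome it, my plan is to construct an enlarged canonical model $\mathfrak{M}^*$ whose worlds are all $\biG$-homomorphisms sending $\Th(\HKbiG)$ to $1$ and whose accessibility $R^*$ is defined by $u(\Box\psi) \leq u'(\psi)$ and $u(\psi) \leq u'(\lozenge\psi)$ quantified over \emph{every} $\psi \in \bimodalLtriangle$, and then to reprove Propositions~\ref{prop:boxwrapping}--\ref{prop:limits} in this setting. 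The delicate point is that, for each target formula $\phi$ and value $\alpha$, one must extract a finite subcollection of $\Box^{>\alpha}_u$ that already suffices to carry through the construction of the witnessing homomorphism $h$ satisfying $\mathbf{C1}$--$\mathbf{C4}$ from Proposition~\ref{prop:homomorphismexistencebox} (and symmetrically for $\lozenge$), keeping the crucial conjunction inside the language. Once the truth lemma is established for all formulas, applying it at $v_0$ in $\mathfrak{M}^*$ delivers $v^*(\gamma,v_0) = v_0(\gamma)$ for every $\gamma \in \Gamma$ and the desired refutation follows.
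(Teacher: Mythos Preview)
Your treatment of the finite case is fine, and your observation that $v_0$ sends every $\HKbiG$-theorem to $1$ (via $\triangle$-closure of $\Th(\HKbiG)$) is correct. The problem lies entirely in the infinite case, where your proposal is not a proof but a wish list.

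You correctly locate the obstruction: the sets $\Box^{>\alpha}_u$ and $\lozenge^{<\alpha}_u$ are finite only because they are carved out of $\Sfconst(\tau)$, and without this the conjunctions and disjunctions in Propositions~\ref{prop:boxwrapping}, \ref{prop:homomorphismexistencebox}, and~\ref{prop:homomorphismexistencediamond} are not formulas. But your remedy---``extract a finite subcollection that already suffices''---does not work as stated. If you define $R^*$ by quantifying over \emph{all} formulas, then to obtain $u R^* u'$ you need $u(\Box\psi)\leq u'(\psi)$ and $u'(\psi)\leq u(\lozenge\psi)$ simultaneously for every $\psi\in\bimodalLtriangle$. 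The homomorphism $h$ produced from any finite subcollection only satisfies $\mathbf{C4}$ relative to that subcollection, and nothing in the argument promotes this to the global accessibility condition. Conversely, if you keep $R^\tau$ local to some finite $\tau$, the truth lemma only covers $\Sfconst(\tau)$ and hence at most finitely many $\gamma\in\Gamma$. You have identified the gap precisely, but you have not closed it; the ``delicate point'' is the entire content of the theorem beyond weak completeness.

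The paper avoids this problem by a completely different mechanism: it encodes the refutation of $\Gamma\models_{\KbiG}\phi$ as a classical first-order theory $\Gamma^*$ (with a predicate for worlds, one for truth-values, a binary function for accessibility, and a function symbol $f_\theta$ for each $\theta\in\bimodalLtriangle$), observes that every \emph{finite} fragment of $\Gamma^*$ is satisfiable by weak completeness (Theorem~\ref{theorem:KbiGcompleteness}), and then invokes first-order compactness and downward L\"owenheim--Skolem to obtain a countable model of the whole of $\Gamma^*$. Embedding the resulting countable linear order into $\mathbb{Q}\cap[0,1]$ yields a genuine $\KbiG$ countermodel. This route never needs a truth lemma for infinitely many formulas at once, which is exactly what your enlarged-canonical-model plan would require but does not deliver.
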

\begin{proof}
The proof follows~\cite[Corollary~4.12]{RodriguezVidal2021}. The only two differences are that we need to account for $\triangle$ and that the $\KbiG$ entailment $\Gamma\models_{\KbiG}\chi$ is defined via the order on $[0,1]$. That is, if the entailment is refuted by $e$, then $\inf\{e(\phi,w):\phi\in\Gamma\}>e(\chi,w)$ for some $w\in\mathfrak{F}$. This, in turn, is equivalent to
\[\exists d\!\in\!(0,1]~\forall\phi\in\Gamma:e(\phi,w)\geq d\text{ but }e(\chi,w)<d\]

Now let $\Gamma\cup\{\phi\}\subseteq\bimodalLtriangle$ and $\Gamma\nvdash_{\HKbiG}\phi$. We consider the classical first order theory $\Gamma^*$ whose signature contains two unary predicates $W$ and $P$, one binary predicate $<$, binary functions $\circ$ and $\mathsf{s}$, unary function $\blacktriangle$, constants $0$, $1$, $c$, $d$, and a function symbol $f_\theta$ for each $\theta\in\bimodalLtriangle$. The axioms are as follows.
\begin{itemize}
\item $\forall x{\sim}(W(x)\wedge P(x))$
\item $\forall x(W(x)\vee{\sim}W(x))$
\item $P(d)$
\item ‘$\langle P,<\rangle$ is a strict linear order with $0$ and $1$ being its minimum and maximum and $0<d\leq1$’.
\item $\forall x\forall y((W(x)\wedge W(y))\rightarrow(\mathsf{s}(x,y)=1\vee\mathsf{s}(x,y)=0))$
\item $\forall x\forall y(P(x)\wedge P(y))\rightarrow((x\leq y\wedge x\circ y=1)\vee(x>y\wedge x\circ y=y))$
\item $\forall x(P(x)\rightarrow(x=1\wedge\blacktriangle(x)=1)\vee(x<1\wedge\blacktriangle(x)=0))$
\item For each $\theta,\theta'\in\bimodalLtriangle$, we add the following formulas.
\begin{itemize}
\item $\forall x(W(x)\rightarrow P(f_\theta(x)))$
\item $\forall x(W(x)\rightarrow f_{{\sim}\theta}(x)=(f_\theta(x)\circ 0))$
\item $\forall x(W(x)\rightarrow f_{\triangle\theta}(x)=\blacktriangle
(f_\theta(x)))$
\item $\forall x(W(x)\rightarrow f_{\theta\wedge\theta'}(x)=\min\{
f_\theta(x),f_{\theta'}(x)\})$
\item $\forall x(W(x)\rightarrow f_{\theta\vee\theta'}(x)=\max\{
f_\theta(x),f_{\theta'}(x)\})$
\item $\forall x(W(x)\rightarrow f_{\theta\rightarrow\theta'}(x)=
f_\theta(x)\circ f_{\theta'}(x))$
\item $\forall x(W(x)\rightarrow f_{\Box\theta}(x)=\inf\limits_{y}\{\mathsf{s}(x,y)\circ f_\theta(y)\})$
\item $\forall x(W(x)\rightarrow f_{\lozenge\theta}(x)=\sup\limits_{y}\{\min\{\mathsf{s}(x,y),f_\theta(y)\}\})$
\end{itemize}
\item For each $\gamma\in\Gamma$, we add $f_\gamma(c)\geq d$.
\item We also add $W(c)\wedge(f_\phi(c)<d)$.
\end{itemize}

The rest of the proof is identical to that in~\cite{RodriguezVidal2021}. For each finite subset $\Gamma^-$ of $\Gamma^*$, we let $\bimodalLtriangle^-=\{\theta:f_\theta\text{ occurs in }\Gamma^-\}$. Since $\bimodalLtriangle^-\cap\Gamma\nvdash_{\HKbiG}\phi$ by assumption, Theorem~\ref{theorem:KbiGcompleteness} entails that there is a~crisp pointed model $\langle\mathfrak{M},c\rangle$ with $\mathfrak{M}=\langle W,\mathsf{s}^{\Gamma^-},e^{\Gamma^-}\rangle$ being such that $e^{\Gamma^-}(\phi,c)<d$ and $e^{\Gamma^-}(\theta,c)\geq d$ for every $\theta\in\Gamma\cap\Gamma^-$. Thus, the following structure
\[\langle W\sqcup[0,1],W,[0,1],<,0,1,c,d,\circ,\blacktriangle,\mathsf{s}^{\Gamma^-},\{f_\theta\}_{\theta\in\bimodalLtriangle}\rangle\]
is a model of $\Gamma^-$. Now, by compactness and the downward L\"{o}wenheim--Skolem theorem, $\Gamma^*$ has a~countable model
\[\mathfrak{M}^*=\langle B,W,P,<,0,1,c,d,\circ,\blacktriangle,\mathsf{s}\{f_\theta\}_{\theta\in\bimodalLtriangle}\rangle\]
Now, we can embed $\langle P,<\rangle$ into $\langle\mathbb{Q}\cap[0,1],<\rangle$ preserving $0$, $1$ as well as all infima and suprema. Hence, we may w.l.o.g.\ assume that $\mathsf{s}$ is crisp and the ranges of $f_\theta$'s are contained in $[0,1]$. Then, it is straightforward to verify that $\mathfrak{M}=\langle W,S,e\rangle$, where $e(\theta,w)=f_\theta(w)$ for all $w\in W$ and $\theta\in\bimodalLtriangle$, is a crisp $\KbiG$ model with a distinguished world $c$ such that $v[\Gamma,c]\geq d$ and $e(\phi,c)<d$ for some $0<d\leq1$. Hence, $\inf\{e(\gamma,c):\gamma\in\Gamma\}>e(\phi,c)$, and thus, $\Gamma\not\models_{\KbiG}\phi$.
\end{proof}

We end the section by providing a complete calculus for $\KGsquare$ in $\bimodalLtrianglesquare$.
\begin{definition}[$\HKGsquare$ --- Hilbert-style calculus for $\KGsquare$]
$\HKGsquare$ contains the following axioms and rules.
\begin{description}
\item[A0:] All instances of $\HKbiG$ rules and axioms in $\bimodalLtrianglesquare$ language.
\item[$\mathsf{neg}$:] $\neg\neg\phi\leftrightarrow\phi$
\item[$\mathsf{DeM}\wedge$:] $\neg(\phi\wedge\chi)\leftrightarrow(\neg\phi\vee\neg\chi)$
\item[$\mathsf{DeM}\vee$:] $\neg(\phi\vee\chi)\leftrightarrow(\neg\phi\wedge\neg\chi)$
\item[$\mathsf{DeM}\!\rightarrow$:] $\neg(\phi\rightarrow\chi)\leftrightarrow(\neg\chi\wedge{\sim}\triangle(\neg\chi\rightarrow\neg\phi))$
\item[$\mathsf{DeM}\triangle$:] $\neg\triangle\phi\leftrightarrow{\sim\sim}\neg\phi$
\item[$\mathsf{DeM}{\sim}$:] $\neg{\sim}\phi\leftrightarrow{\sim}\triangle\neg\phi$
\item[$\mathsf{DeM}\Box\lozenge$:] $\Box\phi\leftrightarrow\neg\lozenge\neg\phi$
\end{description}
\end{definition}

It is instructive to observe that several axioms of $\HKbiG$ become redundant in $\HKGsquare$ if we assume contraposition \emph{as a rule} applied to theorems.
\begin{proposition}\label{prop:KGsquareredundancies}
The following $\HKbiG$ axioms are redundant in $\HKGsquare$ with contraposition:
\begin{itemize}
\item ${\sim}\lozenge\mathbf{0}$;
\item $\lozenge(\phi\vee\chi)\rightarrow(\lozenge\phi\vee\lozenge\chi)$;
\item ${\sim}\triangle(\lozenge\phi\rightarrow\lozenge\chi)\rightarrow\lozenge{\sim}\triangle(\phi\rightarrow\chi)$;
\item $\triangle\Box\phi\rightarrow\Box\triangle\phi$.
\end{itemize}
\end{proposition}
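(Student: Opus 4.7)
The proof exploits $\mathsf{DeM}\Box\lozenge$, the only modal axiom of $\HKGsquare$ absent from $\HKbiG$. Combined with $\mathsf{neg}$ and the contraposition rule of Proposition~\ref{prop:Gsquarecontraposition}, it yields the derived equivalence $\lozenge\phi\leftrightarrow\neg\Box\neg\phi$. Together with the remaining De Morgan axioms, each $\lozenge$-occurrence in the four candidate axioms can be replaced by a provably equivalent $\Box$-formula. The plan is to show that the resulting $\Box$-only formula is derivable from the remaining rules and axioms of $\HKGsquare$: the $\biG$-base, $\mathbf{K}$ for $\Box$, $\mathbf{FS}$, the first $\mathbf{Cr}$ axiom, $\mathbf{nec}$, and the $\mathsf{DeM}$ family. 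I would dispatch the four axioms in an order allowing later derivations to cite already-derived ones.

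I would dispose of ${\sim}\lozenge\mathbf{0}$ first: $\mathsf{DeM}\!\rightarrow$ applied to $\mathbf{1}\coloneqq p\rightarrow p$ gives $\neg\mathbf{1}\leftrightarrow\mathbf{0}$, hence $\neg\mathbf{0}\leftrightarrow\mathbf{1}$ by $\mathsf{neg}$; then $\mathsf{DeM}\Box\lozenge$ together with congruence under $\Box$ gives $\neg\Box\mathbf{1}\leftrightarrow\lozenge\mathbf{0}$, and since $\Box\mathbf{1}$ is derivable by $\mathbf{nec}$, contraposition yields $\neg\Box\mathbf{1}\rightarrow\mathbf{0}$, i.e., ${\sim}\neg\Box\mathbf{1}$, which $\HGtriangle$-axiom~7 transports to ${\sim}\lozenge\mathbf{0}$. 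For $\lozenge(\phi\vee\chi)\rightarrow(\lozenge\phi\vee\lozenge\chi)$, the rewriting $\lozenge(\phi\vee\chi)\leftrightarrow\neg\Box(\neg\phi\wedge\neg\chi)$ via $\mathsf{DeM}\vee$ and congruence, combined with the $\biG$-derivable $\Box(\alpha\wedge\beta)\leftrightarrow\Box\alpha\wedge\Box\beta$ (from $\mathbf{K}$ and $\mathbf{nec}$) and $\mathsf{DeM}\wedge$, gives $\lozenge\phi\vee\lozenge\chi$. For $\triangle\Box\phi\rightarrow\Box\triangle\phi$, I use $\mathsf{DeM}\triangle$ (which yields $\triangle\alpha\leftrightarrow\neg{\sim\sim}\neg\alpha$) and $\mathsf{DeM}\Box\lozenge$ on both sides; after contraposition the claim reduces to $\lozenge{\sim\sim}\psi\rightarrow{\sim\sim}\lozenge\psi$ for $\psi=\neg\phi$, which is derivable in $\HKG$ from the already-proven ${\sim}\lozenge\mathbf{0}$, the $\biG$-theorem $\alpha\rightarrow{\sim\sim}\alpha$, and the equivalence $\Box{\sim}\alpha\leftrightarrow{\sim}\lozenge\alpha$ (obtained from $\mathsf{P}$, $\mathbf{FS}$, and ${\sim}\lozenge\mathbf{0}$).

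Finally, for ${\sim}\triangle(\lozenge\phi\rightarrow\lozenge\chi)\rightarrow\lozenge{\sim}\triangle(\phi\rightarrow\chi)$, the same scheme applies: substitute $\neg\Box\neg$ for each $\lozenge$ and push $\neg$ inward through $\mathsf{DeM}\!\rightarrow$, $\mathsf{DeM}\triangle$, $\mathsf{DeM}{\sim}$, and $\mathsf{neg}$; the resulting $\Box$-only implication is then derived using $\mathbf{FS}$, the first $\mathbf{Cr}$ axiom, and $\biG$-reasoning. This third axiom is expected to be the main obstacle, since its rewriting produces a heavily nested formula in which the interaction of $\triangle$, ${\sim}$, and $\neg$ must be carefully tracked, and the derivation of the resulting $\Box$-form requires a subtle combination of $\mathbf{FS}$ with the first $\mathbf{Cr}$.
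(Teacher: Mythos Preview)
Your proposal is correct and follows essentially the same route as the paper: use $\mathsf{DeM}\Box\lozenge$, $\mathsf{neg}$, and the $\neg$-contraposition rule to convert each of the four $\lozenge$-axioms into a $\Box$-formula, and then derive that formula from the remaining axioms. Your treatment of ${\sim}\lozenge\mathbf{0}$, of $\lozenge(\phi\vee\chi)\rightarrow\lozenge\phi\vee\lozenge\chi$, and of $\triangle\Box\phi\rightarrow\Box\triangle\phi$ matches the paper's (the last one reduces, exactly as you say, to $\lozenge{\sim\sim}\psi\rightarrow{\sim\sim}\lozenge\psi$, which is an $\HKGc$ theorem).

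For ${\sim}\triangle(\lozenge\phi\rightarrow\lozenge\chi)\rightarrow\lozenge{\sim}\triangle(\phi\rightarrow\chi)$ your plan is the right one, but you leave it as a sketch. The one ingredient you do not name and that you will need is the Barcan formula $\Box\triangle\psi\rightarrow\triangle\Box\psi$ (Proposition~\ref{prop:BarcanCrispness}(1), provable without $\mathbf{Cr}$). After your De~Morgan rewriting, the target becomes
\[
\Box({\sim}\neg\chi\vee\triangle(\neg\chi\rightarrow\neg\phi))\rightarrow({\sim}\Box\neg\chi\vee\triangle(\Box\neg\chi\rightarrow\Box\neg\phi)),
\]
and the paper obtains this by applying the first $\mathbf{Cr}$ axiom, $\mathbf{K}$, and Barcan (to pass from $\Box\triangle(\neg\chi\rightarrow\neg\phi)$ to $\triangle\Box(\neg\chi\rightarrow\neg\phi)$ and then to $\triangle(\Box\neg\chi\rightarrow\Box\neg\phi)$), followed by $\lozenge{\sim}\alpha\rightarrow{\sim}\Box\alpha$ (an $\HKGc$ theorem using $\mathbf{FS}$). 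So $\mathbf{FS}$ alone, as you list it, will not carry the step; make sure to invoke Barcan explicitly.
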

\begin{proof}
We begin with ${\sim}\lozenge\mathbf{0}$. Since ${\sim}$ can be defined via $\rightarrow$, we need to prove $\lozenge\mathbf{0}\rightarrow\mathbf{0}$. By contraposition, we can prove $\neg\mathbf{0}\rightarrow\neg\lozenge\mathbf{0}$. By De Morgan laws, we transform this into $\neg\mathbf{0}\rightarrow\Box\neg\mathbf{0}$. Now recall that $\HGsquare\vdash\neg\mathbf{0}$, whence $\HKGsquare\vdash\neg\mathbf{0}\rightarrow\Box\neg\mathbf{0}$.

For $\lozenge(\phi\vee\chi)\rightarrow(\lozenge\phi\vee\lozenge\chi)$, note again that we can prove $\neg(\lozenge\phi\vee\lozenge\chi)\rightarrow\neg\lozenge(\phi\vee\chi)$ instead. But this is equivalent to $\Box(\neg\phi\wedge\neg\chi)\rightarrow(\Box\neg\phi\wedge\Box\neg\chi)$ which is provable in $\KG$.

For ${\sim}\triangle\lozenge$, we proceed as follows. First, note that $$\Box({\sim}\neg\chi\vee\triangle(\neg\chi\rightarrow\neg\phi))\rightarrow(\lozenge{\sim}\neg\chi\vee\triangle(\Box\neg\chi\rightarrow\Box\neg\phi))$$ can be proven via an application of $\vee$-commutativity to $\mathbf{Cr}$, the Barcan's formula, and $\mathbf{K}$.

From here, since, $\HKGc\vdash\lozenge{\sim}\neg\chi\rightarrow{\sim}\Box\neg\chi$, we obtain $$\Box({\sim}\neg\chi\vee\triangle(\neg\chi\rightarrow\neg\phi))\rightarrow({\sim}\Box\neg\chi\vee\triangle(\Box\neg\chi\rightarrow\Box\neg\phi))$$
Now, applying ${\sim\sim}\triangle\psi\leftrightarrow\triangle\psi$, we obtain
$$\Box({\sim}\neg\chi\vee{\sim\sim}\triangle(\neg\chi\rightarrow\neg\phi))\rightarrow({\sim}\Box\neg\chi\vee{\sim\sim}\triangle(\Box\neg\chi\rightarrow\Box\neg\phi))$$
We use the De Morgan law for $\sim$ --- ${\sim}(\psi\wedge\psi')\leftrightarrow({\sim}\psi\vee{\sim}\psi')$ to get
$$\Box{\sim}(\neg\chi\wedge{\sim}\triangle(\neg\chi\rightarrow\neg\phi))\rightarrow{\sim}(\Box\neg\chi\wedge{\sim}\triangle(\Box\neg\chi\rightarrow\Box\neg\phi))$$
At this point, we apply the De Morgan laws for $\rightarrow$ and $\triangle$ and $\lozenge$ and $\Box$ which give us
$$\Box{\sim}\neg(\phi\rightarrow\chi)\rightarrow{\sim}\neg(\lozenge\phi\rightarrow\lozenge\chi)$$
Recall now that $\HGtriangle\vdash{\sim}\psi\leftrightarrow{\sim\sim\sim}\psi$ and $\HGtriangle\vdash\triangle{\sim}\psi\leftrightarrow{\sim}\psi$. Thus, we have
$$\Box{\sim}\triangle{\sim\sim}\neg(\phi\rightarrow\chi)\rightarrow{\sim}\triangle{\sim\sim}\neg(\lozenge\phi\rightarrow\lozenge\chi)$$
The application of $\mathsf{DeM}\triangle$ gives us
$$\Box{\sim}\triangle\neg\triangle(\phi\rightarrow\chi)\rightarrow{\sim}\triangle\neg\triangle(\lozenge\phi\rightarrow\lozenge\chi)$$
We can now apply $\mathsf{DeM}{\sim}$ to obtain
$$\Box\neg{\sim}\triangle(\phi\rightarrow\chi)\neg\rightarrow\neg{\sim}\triangle(\lozenge\phi\rightarrow\lozenge\chi)$$
Finally, we use the $\neg$ contraposition $\mathsf{neg}$, and $\mathsf{DeM}\Box\lozenge$ to get
$${\sim}\triangle(\lozenge\phi\rightarrow\lozenge\chi)\rightarrow\lozenge{\sim}\triangle(\phi\rightarrow\chi)$$

At last, we can see that $\triangle\Box\phi\rightarrow\Box\triangle\phi$ can be transformed via contraposition and De Morgan laws into $\lozenge{\sim\sim}\neg\phi\rightarrow{\sim\sim}\lozenge\neg\phi$ which is provable in $\HKGc$.
\end{proof}

The following completeness theorem is a straightforward corollary of Theorem~\ref{theorem:KbiGcompleteness} and Proposition~\ref{prop:+translation} since every $\phi\in\bimodalLtrianglesquare$ can be transformed into its NNF using the axioms of $\HKGsquare$.
\begin{theorem}\label{theorem:HKGsquarecompleteness}
$\HKGsquare$ is strongly complete: for any $\Gamma\cup\{\phi\}\subseteq\bimodalLtrianglesquare$, it holds that $\Gamma\models_{\KGsquare}\phi$ iff $\Gamma\vdash_{\HKGsquare}\phi$.
\end{theorem}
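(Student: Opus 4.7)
The plan is to lift strong completeness of $\HKGsquare$ from strong completeness of $\HKbiG$ (theorem~\ref{theorem:KbiGstrongcompleteness}) via the $\NNF$ normalisation and the $^+$-translation of proposition~\ref{prop:+translation}, mirroring how proposition~\ref{prop:HGsquarecompleteness} lifted $\HGsquare$ from $\HGtriangle$. Soundness of $\HKGsquare$ is a routine check against definition~\ref{def:KG2semantics}, since all new axioms beyond $\HKbiG$ are direct semantic duals of the clauses for $\neg$, $\Box$, and $\lozenge$.

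For completeness I would argue contrapositively. First, using $\mathsf{neg}$, $\mathsf{DeM}\wedge$, $\mathsf{DeM}\vee$, $\mathsf{DeM}\!\rightarrow$, $\mathsf{DeM}\triangle$, $\mathsf{DeM}{\sim}$, and $\mathsf{DeM}\Box\lozenge$, I would derive in $\HKGsquare$ the equivalence $\psi\leftrightarrow\NNF(\psi)$ for every $\psi\in\bimodalLtrianglesquare$, so we may assume $\Gamma\cup\{\phi\}$ is already in NNF. Next, apply the syntactic $^+$-translation renaming each subformula $\neg p$ by a fresh propositional variable $p^{\ast}$, producing $\Gamma^+\cup\{\phi^+\}\subseteq\bimodalLtriangle$. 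Then observe that for NNF $\Gamma,\phi$, any $\HKbiG$-derivation of $\phi^+$ from $\Gamma^+$ is already an $\HKGsquare$-derivation by axiom group $\mathbf{A0}$, and the inverse substitution $p^{\ast}\mapsto\neg p$ converts it into an $\HKGsquare$-derivation of $\phi$ from $\Gamma$; hence $\Gamma\not\vdash_{\HKGsquare}\phi$ yields $\Gamma^+\not\vdash_{\HKbiG}\phi^+$.

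Now strong completeness of $\HKbiG$ produces a crisp countermodel $\mathfrak{M}=\langle W,R,v\rangle$ and $w\in W$ with $\inf\{v(\gamma^+,w):\gamma\in\Gamma\}>v(\phi^+,w)$. On the same frame I would define a $\KGsquare$ model $\mathfrak{M}^{\pm}=\langle W,R,v_1,v_2\rangle$ by $v_1(q,u)=v(q,u)$ and $v_2(q,u)=v(q^{\ast},u)$ for every $q\in\Prop$ and $u\in W$; an induction on NNF formulas exactly as in proposition~\ref{prop:+translation} gives $v_1(\psi,u)=v(\psi^+,u)$ for every NNF $\psi$, so the $v_1$-inequality of $\KGsquare$-entailment fails at $w$ and therefore $\Gamma\not\models_{\KGsquare}\phi$.

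The main obstacle I anticipate is that $\KGsquare$-entailment requires simultaneous preservation of a $v_1$-inequality and a $v_2$-inequality, whereas the $^+$-translation only tracks $v_1$. The resolution is simply that refuting either inequality already refutes the whole entailment, so it suffices to refute the $v_1$-inequality. A minor secondary point is to justify that the NNF-equivalences really transfer under consequences and not only theorems; this follows from the usual congruence of $\leftrightarrow$ with every propositional connective and with $\Box,\lozenge$ via $\mathbf{nec}$, all of which is inherited from $\HKbiG$.
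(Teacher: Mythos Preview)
Your proposal is correct and follows essentially the same route that the paper sketches: reduce $\HKGsquare$ to $\HKbiG$ via the $\NNF$ transformation (available from the De Morgan axioms of $\HKGsquare$) and the $^+$-translation of proposition~\ref{prop:+translation}, then invoke completeness of $\HKbiG$. You are in fact more careful than the paper on two points: you correctly cite the \emph{strong} completeness of $\HKbiG$ (theorem~\ref{theorem:KbiGstrongcompleteness}), whereas the paper's one-line justification references only the weak completeness theorem, and you explicitly observe that refuting the $v_1$-inequality alone suffices to refute $\KGsquare$-entailment.
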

\section[Model-theoretic properties]{Model-theoretic properties of $\KbiG$ and $\KGsquare$\label{sec:semantics}}
In this section, we further investigate the semantical properties of $\KGsquare$. However, to simplify the presentation, we will formulate most results in the language of $\KG$ (i.e., without $\triangle$ and $\neg$). Still, they are applicable to $\KbiG$ and $\KGsquare$ by virtue of Proposition~\ref{prop:conservativity}.
\subsection{Transferrable formulas}
As we have already discussed in the introduction, it is known (cf.~e.g.~\cite{CaicedoMetcalfeRodriguezRogger2013}) that $\KG^c$ can be embedded into $\mathbf{K}$ if we replace each variable $p$ with ${\sim\sim}p$ in the formulas. Furthermore, some formulas defining useful classes of frames do not require any translation at all. For example~\cite{RodriguezVidal2021}, the following formulas define the same classes of frames both in $\mathbf{K}$ and $\KG^c$.
\begin{align*}
\Box p\rightarrow p&&p\rightarrow\lozenge p\tag{reflexivity}\\
\Box p\rightarrow\Box\Box p&&\lozenge\lozenge p\rightarrow\lozenge p\tag{transitivity}\\
p\rightarrow\Box\lozenge p&&\lozenge\Box p\rightarrow p\tag{symmetry}\\
\lozenge p\rightarrow\Box\lozenge p&&\lozenge\Box p\rightarrow\Box p\tag{Euclideanness}\\
\lozenge\mathbf{1}\tag{seriality}
\end{align*}

One should observe, however, that since $\Box$ and $\lozenge$ are not interdefinable in $\KG^c$~\cite[Lemma~6.1]{RodriguezVidal2021}, nor in $\KbiG$~\cite[Corollary~2]{BilkovaFrittellaKozhemiachenko2022IJCAR}, one needs \emph{both} formulas to define a class of frames in the bi-modal languages. On the other hand, the interdefinability of $\Box$ and $\lozenge$ in $\KGsquare$ allows for the use of only one of these formulas.

A natural question now is whether \emph{every} classical definition of a class of frames $\mathbb{F}$ defines $\mathbb{F}$ in $\KG^c$. Evidently, the answer is negative. For consider $\lozenge(p\vee{\sim}p)$. Even though it defines serial frames in classical modal logic, it does not do so in the G\"{o}del modal logic. In fact, $\lozenge(p\vee{\sim}p)$ can be refuted \emph{on every frame}.

One could also think that every $\phi$ that classically defines $\mathbb{F}$, defines it in $\KG^c$ as long as $\phi^-$ ($\phi$ with all modalities removed) is a $\mathsf{G}$-tautology. This turns out to be false too. For consider $\lozenge(\Box{\sim\sim}p\rightarrow{\sim\sim}\Box p)$. Clearly, $\phi^-$ is a $\mathsf{G}$-tautology. Classically, $\phi$ defines serial frames. However, it is not valid on the following frame:

\vspace{.5em}

\[\xymatrix{w_1\ar@(u,l)&\ldots&w_n\ar@(u,l)&\ldots\\&w_0\ar[ul]\ar[ur]\ar[urr]&&\\&u\ar[u]&&}\]
Indeed, it suffices to put $e(p,w_i)=\frac{1}{i}$. Then, $e(\Box p,w_0)=0$, while $e(\Box{\sim\sim}p,w_0)=1$. Hence, $e(\Box{\sim\sim}p\rightarrow{\sim\sim}\Box p,w_0)=0$ and $e(\phi,u)=0$, although the frame is serial.

A question thus arises: which classes of formulas are \emph{transferrable}, i.e., define the same frames in $\mathbf{K}$ and $\KG^c$. In this section, we establish several such classes.
\begin{definition}[Transferrable formulas]\label{def:transferrableformula}
$\phi\in\{\mathbf{0},\mathbf{1},{\sim},\wedge,\vee,\rightarrow,\Box,\lozenge\}$ is called \emph{transferrable} iff for any crisp frame $\mathfrak{F}$~and $w\in\mathfrak{F}$, it holds that $\mathfrak{F},w\models_{\mathbf{K}}\phi$ iff $\mathfrak{F},w\models_{\KbiG}\phi$.
\end{definition}
\begin{proposition}\label{prop:closedtransfer}
Every closed formula (i.e., built only from constants $\mathbf{0}$ and $\mathbf{1}$) $\phi$ is transferrable.
\end{proposition}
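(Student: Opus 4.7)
The plan is to argue by a straightforward structural induction on the closed formula $\phi$ that, for every crisp frame $\mathfrak{F}$, every world $w\in\mathfrak{F}$, and every $\KbiG$ valuation $v$ on $\mathfrak{F}$, the value $v(\phi,w)$ lies in $\{0,1\}$ and moreover coincides with the classical truth value of $\phi$ at $w$ in the classical Kripke model built on the same frame. Since $\phi$ contains no propositional variables, the choice of $v$ is in fact irrelevant, so local validity at $w$ depends only on the frame, and the equivalence between classical and $\KbiG$ validity follows.

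The base cases $\phi=\mathbf{0}$ and $\phi=\mathbf{1}$ are immediate since $v(\mathbf{0},w)=0$ and $v(\mathbf{1},w)=1$ in both semantics. For the propositional connectives I would use the fact that, restricted to the two-element sublattice $\{0,1\}\subseteq[0,1]$, the G\"{o}del operations $\wedge_\mathsf{G}$, $\vee_\mathsf{G}$, $\rightarrow_\mathsf{G}$ and $\sim_\mathsf{G}$ agree with the classical Boolean operations; hence $\{0,1\}$ is closed under them and the inductive value matches the classical truth table.

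For the modal cases on a crisp frame, recall that $v(\Box\phi,w)=\inf\{v(\phi,w'):w'\in R(w)\}$ and $v(\lozenge\phi,w)=\sup\{v(\phi,w'):w'\in R(w)\}$, with the conventions $\inf\varnothing=1$ and $\sup\varnothing=0$ from Definition~\ref{def:KbiGsemantics}. By the induction hypothesis every $v(\phi,w')$ lies in $\{0,1\}$, so the infimum equals $1$ exactly when every successor satisfies $\phi$ and $0$ otherwise, while the supremum dually captures existential satisfaction; in both cases this coincides with the classical universal or existential quantification over $R(w)$, including the vacuous behaviour at dead-end worlds.

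There is no real obstacle here. The only subtlety worth flagging is the convention on empty $R(w)$, which is already fixed in the semantics and exactly matches the classical one, so the induction closes uniformly for $\Box$ and $\lozenge$. Combining the inductive claim with the definitions of $\models_{\mathbf{K}}$ and $\models_{\KbiG}$ then yields $\mathfrak{F},w\models_{\mathbf{K}}\phi$ iff $\mathfrak{F},w\models_{\KbiG}\phi$, i.e.\ local transferrability of every closed~$\phi$.
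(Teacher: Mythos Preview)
Your argument is correct and is exactly the standard one: a structural induction showing that closed formulas take only values in $\{0,1\}$, using that $\{0,1\}$ is closed under all the (bi-)G\"{o}del operations and under infima and suprema, together with the agreement of these operations with the Boolean ones on $\{0,1\}$. The paper itself gives no proof of this proposition, evidently regarding it as immediate, so your write-up simply fills in the routine details the authors omitted.
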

\begin{proof}
Immediately since closed formulas on crisp frames have values in $\{0,1\}$.
\end{proof}
\begin{theorem}\label{prop:transferclosure}
Let $\phi$, $\phi'$, and $\psi$ be transferrable. Let further, $\mathsf{Var}(\phi)\cap\mathsf{Var}(\psi)=\varnothing$. Then, $\phi\wedge\phi'$, $\phi\vee\psi$, and $\Box\phi$ are transferrable.
\end{theorem}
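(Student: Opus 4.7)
The plan is to handle the three cases separately, reducing validity at $w$ of each compound formula to validity of its components at appropriately related worlds, and then invoking local transferability of the components.

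The cases of $\wedge$ and $\Box$ are essentially bookkeeping. For $\phi\wedge\phi'$, in both $\mathbf{K}$ and $\KbiG$ one has $\mathfrak{F},w\models\phi\wedge\phi'$ iff every valuation evaluates both $\phi$ and $\phi'$ to $1$ at $w$, which factors as $\mathfrak{F},w\models\phi$ and $\mathfrak{F},w\models\phi'$; local transferability of each conjunct then closes the case, and note that no disjointness of variables is required. For $\Box\phi$, the crisp $\Box$ clause (together with the convention $\inf\varnothing=1$) gives that $\mathfrak{F},w\models\Box\phi$ iff $\mathfrak{F},w'\models\phi$ for every $w'\in R(w)$, uniformly in both logics, so local transferability of $\phi$ at each successor world lifts to local transferability of $\Box\phi$ at $w$.

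The case of $\phi\vee\psi$ is the delicate one, and is precisely where the hypothesis $\mathsf{Var}(\phi)\cap\mathsf{Var}(\psi)=\varnothing$ enters. In either logic, $\mathfrak{F},w\not\models\phi\vee\psi$ iff there exists a valuation $v$ with $v(\phi,w)<1$ and $v(\psi,w)<1$, so in particular $\mathfrak{F},w\not\models\phi$ and $\mathfrak{F},w\not\models\psi$. My plan is to use local transferability of $\phi$ and $\psi$ to move these two refutations into the target logic, obtaining valuations $v_1$ and $v_2$ there with $v_1(\phi,w)<1$ and $v_2(\psi,w)<1$. I then glue $v_1$ and $v_2$ along the disjoint variable supports $\mathsf{Var}(\phi)$ and $\mathsf{Var}(\psi)$ into a single valuation $v^\ast$ on $\mathfrak{F}$ that agrees with $v_1$ on $\mathsf{Var}(\phi)$ and with $v_2$ on $\mathsf{Var}(\psi)$ (and is arbitrary elsewhere). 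A routine induction on subformulas shows that $v^\ast(\phi,u)=v_1(\phi,u)$ and $v^\ast(\psi,u)=v_2(\psi,u)$ at every world $u$ of $\mathfrak{F}$, so in particular $v^\ast(\phi\vee\psi,w)<1$, refuting $\phi\vee\psi$ at $w$ in the target logic. The symmetric argument gives the converse direction.

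The main obstacle is the gluing step in the $\vee$ case: the splice of two valuations living on disjoint variable sets must genuinely reproduce the component values at all worlds of $\mathfrak{F}$, and this is exactly what the disjointness hypothesis buys. Without $\mathsf{Var}(\phi)\cap\mathsf{Var}(\psi)=\varnothing$, the refuting choices for the shared variables made by $v_1$ and $v_2$ could disagree, so no such splice exists in general and the argument would break; hence the hypothesis is essential to the proof rather than a purely technical convenience.
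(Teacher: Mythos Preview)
Your proof is correct and follows essentially the same approach as the paper's. The paper presents the $\vee$ case as a bare iff-chain
\[
\mathfrak{F},w\not\models_{\KbiG}\phi\vee\psi
\ \Leftrightarrow\
\bigl(\mathfrak{F},w\not\models_{\KbiG}\phi\text{ and }\mathfrak{F},w\not\models_{\KbiG}\psi\bigr)
\ \Leftrightarrow\
\bigl(\mathfrak{F},w\not\models_{\mathbf{K}}\phi\text{ and }\mathfrak{F},w\not\models_{\mathbf{K}}\psi\bigr)
\ \Leftrightarrow\
\mathfrak{F},w\not\models_{\mathbf{K}}\phi\vee\psi,
\]
leaving the gluing of two refuting valuations along disjoint variable supports implicit in the outer equivalences; you make that step explicit and explain why $\mathsf{Var}(\phi)\cap\mathsf{Var}(\psi)=\varnothing$ is exactly what is needed. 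The only cosmetic difference is that you transfer first and glue in the target logic, while the paper's chain glues in each logic separately; both are trivially equivalent. The $\wedge$ and $\Box$ cases match the paper's argument verbatim.
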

\begin{proof}
The case of $\phi\wedge\phi'$ is straightforward, so we will only consider $\phi\vee\psi$ and $\Box\phi$.

\fbox{$\phi\vee\psi$}
\begin{align*}
\mathfrak{F},w\not\models_{\KbiG}\phi\vee\psi&\text{ iff }\mathfrak{F},w\not\models_{\KbiG}\phi\text{ and }\mathfrak{F},w\not\models_{\KbiG}\psi\\
&\text{ iff }\mathfrak{F},w\not\models_{\mathbf{K}}\phi\text{ and }\mathfrak{F},w\not\models_{\mathbf{K}}\psi\tag{by assumption}\\
&\text{ iff }\mathfrak{F},w\not\models_{\mathbf{K}}\phi\vee\psi
\end{align*}

\fbox{$\Box\phi$}
\begin{align*}
\mathfrak{F},w\not\models_{\KbiG}\Box\phi&\text{ iff }\exists w':wRw'\text{ and }\mathfrak{F},w'\not\models_{\KbiG}\phi\\
&\text{ iff }\exists w':wRw'\text{ and }\mathfrak{F},w'\not\models_{\mathbf{K}}\phi\tag{by assumption}\\
&\text{ iff }\mathfrak{F},w\not\models_{\mathbf{K}}\Box\phi
\end{align*}
\end{proof}

To establish further transfer results, we will need the notions of \emph{positive} and \emph{monotone} formulas.
\begin{definition}\label{def:monotoneformulas}~
\begin{itemize}
\item $\phi\in\bimodalLtrianglesquare$ is called \emph{monotone} iff it is built over $\{\wedge,\vee,\Box,\lozenge,\mathbf{1},\mathbf{0}\}$.
\item A \emph{monotone} formula is called \emph{positive} iff it does not contain $\mathbf{1}$ and $\mathbf{0}$.
\end{itemize}
\end{definition}
\begin{lemma}\label{lemma:transferclassicalisation}
Let $\phi$ and $\phi'$ be monotone. Let further, $e(\phi,w)>e(\phi',w')=x'$. Define
\begin{align}
e^\mathsf{cl}(p,u)&=
\begin{cases}
1&\text{ iff }e(p,u)>x'\\
0&\text{ iff }otherwise
\end{cases}\label{equ:classicalisation}
\tag{vCL}
\end{align}
Then $e^\mathsf{cl}(\phi,w)=1$ and $e^\mathsf{cl}(\phi',w')=0$.
\end{lemma}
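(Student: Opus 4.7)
The plan is to reduce the lemma to a structural induction on monotone formulas. Concretely, I will prove by induction on the complexity of a monotone $\psi$ that for every state $u$: (a) if $v(\psi,u)>x'$ then $v^{\mathsf{cl}}(\psi,u)=1$, and (b) if $v(\psi,u)\leq x'$ then $v^{\mathsf{cl}}(\psi,u)=0$. Granting this, the lemma is immediate: (a) applied to $\phi$ at $w$ (since $v(\phi,w)>x'$ by hypothesis) yields $v^{\mathsf{cl}}(\phi,w)=1$, and (b) applied to $\phi'$ at $w'$ (since $v(\phi',w')=x'$) yields $v^{\mathsf{cl}}(\phi',w')=0$. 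Observe that the hypothesis $x'<v(\phi,w)\leq 1$ forces $x'<1$, which is implicitly needed to handle the constant $\mathbf{1}$.

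The base and propositional cases are routine. For a variable $p$ both (a) and (b) simply restate (vCL); for the constants we have $v(\mathbf{1},u)=1>x'$ and $v(\mathbf{0},u)=0\leq x'$, matched by their classical values. For $\psi_1\wedge\psi_2$, the $\min$ semantics of $\biG$ aligns with classical $\wedge$: (a) needs both conjuncts strictly above $x'$, while (b) needs at least one $\leq x'$, in either case IH lifts through the classical connective; $\vee$ is dual via $\max$. For $\lozenge\psi$, part (a) extracts from $\sup>x'$ a witness $u'$ with $v(\psi,u')>x'$ and applies IH; part (b) follows because $\sup\leq x'$ dominates every accessible value, so IH gives $v^{\mathsf{cl}}(\psi,u')=0$ uniformly, whence $v^{\mathsf{cl}}(\lozenge\psi,u)=0$. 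The direction (a) for $\Box\psi$ is the easy half of the modal step: if $\inf\{v(\psi,u'):uRu'\}>x'$, then $v(\psi,u')>x'$ at every accessible state, IH classicalises each to $1$, and thus $v^{\mathsf{cl}}(\Box\psi,u)=1$.

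The main obstacle is direction (b) for $\Box\psi$: from $v(\Box\psi,u)\leq x'$ I must produce an accessible $u'$ with $v(\psi,u')\leq x'$ so as to apply IH and obtain $v^{\mathsf{cl}}(\Box\psi,u)=0$. When the infimum is strictly less than $x'$ a witness exists from the very definition of infimum; the delicate subcase is when the infimum equals $x'$ and is not attained, which is precisely what can occur at $\phi'$ itself if $\phi'$ has a $\Box$ on top. I plan to close this gap by leveraging the specific hypothesis on $\phi'$ rather than arguing for arbitrary monotone formulas at arbitrary states: the induction need only run over subformulas of $\phi$ and $\phi'$, and at the outermost $\Box$ of $\phi'$ one can use the subformula-bounded arguments from the canonical-model analysis of Section~\ref{sec:axiomatisation} (cf.\ Proposition~\ref{prop:limits}) to guarantee the existence of an accessible state realising the infimum up to the threshold, thereby producing the required witness. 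I expect this to be the only non-routine step of the proof.
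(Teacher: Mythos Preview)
You have correctly located the only delicate point: direction (b) for $\Box$ when the infimum equals $x'$ but is not attained. Your proposed remedy, however, does not work. Proposition~\ref{prop:limits} is a statement about the canonical model $\mathfrak{M}^\tau$ built in Section~\ref{sec:axiomatisation}; it guarantees, \emph{for the canonical valuation}, accessible states realising values arbitrarily close to a given $\Box$-value. The present lemma is stated for an arbitrary crisp model and valuation $v$, and nothing links this $v$ to a canonical one. Invoking canonical-model machinery here is a category error: the witness you need simply does not have to exist in a general model.

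Worse, the gap is not merely a missing argument: your claim (b), and in fact the lemma as stated, fails. Take a crisp frame with $w$ accessing $w_1,w_2,\ldots$, set $v(q,w_n)=\tfrac{1}{2}+\tfrac{1}{n+2}$ so that $v(\Box q,w)=\tfrac{1}{2}$, and set $v(p,w)=\tfrac{3}{4}$. With $\phi=p$, $\phi'=\Box q$ and $w=w'$ one has $v(\phi,w)>v(\phi',w')=x'=\tfrac{1}{2}$, yet $v^{\mathsf{cl}}(q,w_n)=1$ for every $n$ and hence $v^{\mathsf{cl}}(\Box q,w)=1\neq 0$. The paper's own proof writes out only the case of $\lozenge$ on the left and dismisses the remaining modal cases as ``similar'', so it does not address this situation either; whatever argument one supplies for Theorem~\ref{prop:transfermonotone} will have to modify the classicalisation in~\eqref{equ:classicalisation} rather than appeal to Section~\ref{sec:axiomatisation}.
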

\begin{proof}
We proceed by induction on the total number of connectives in $\phi$ and $\phi'$. The basis case of $\phi$ and $\phi'$ being variables or constants is straightforward. The cases of propositional connectives are easy as well.

For $e(\lozenge\phi,w)>e(\phi',w')$, we proceed as follows.
\begin{align*}
e(\lozenge\phi,w)>e(\phi',w')&\text{ iff }\sup\{e(\phi,u):wRu\}>e(\phi',w')\\
&\text{ iff }\exists u:wRu\text{ and }e(\phi,u)>e(\phi',w')\\
&\text{ iff }\exists u:wRu\text{ and }e^\mathsf{cl}(\phi,u)=1\text{ and }e^\mathsf{cl}(\phi',w')=0\tag{by IH}\\
&\text{ iff }e^\mathsf{cl}(\lozenge\phi,w)=1\text{ and }e^\mathsf{cl}(\phi',w')=0
\end{align*}
Other cases of modalities can be tackled in a similar manner.
\end{proof}
\begin{theorem}\label{prop:transfermonotone}
Let $\phi$ and $\phi'$ be monotone. Then $\phi\rightarrow\phi'$ is transferrable.
\end{theorem}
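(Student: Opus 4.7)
The plan is to prove the biconditional by contraposition in both directions, using Lemma~\ref{lemma:transferclassicalisation} as the bridge that converts a $\KbiG$ refutation into a two-valued (i.e.\ classical) refutation.

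First I would observe a simple structural fact: on a crisp frame, any classical valuation $v:\mathsf{Var}\times W\to\{0,1\}$ is a special $\KbiG$ valuation, and the clauses of Definition~\ref{def:KbiGsemantics} restricted to $\{0,1\}$-inputs reproduce the Boolean clauses (for instance, $a\rightarrow_{\mathsf{G}}b=1$ iff $a\le b$ agrees with classical material implication on $\{0,1\}$, and the $\inf$/$\sup$ in the modal clauses become $\forall$/$\exists$ over the crisp successor set $R(w)$). This immediately gives the easy direction: if $\mathfrak{F},w\not\models_{\mathbf{K}}\phi\rightarrow\phi'$, witnessed by a classical valuation $v$, then reading $v$ as a $\KbiG$ valuation yields $v(\phi,w)=1$ and $v(\phi',w)=0$, hence $v(\phi\rightarrow\phi',w)=0<1$, so $\mathfrak{F},w\not\models_{\KbiG}\phi\rightarrow\phi'$.

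For the converse, suppose $\mathfrak{F},w\not\models_{\KbiG}\phi\rightarrow\phi'$. Then there is a $\KbiG$ valuation $v$ with $v(\phi\rightarrow\phi',w)<1$, which by the G\"odel clause for $\rightarrow$ forces $v(\phi,w)>v(\phi',w)$. Setting $x':=v(\phi',w)$ and applying Lemma~\ref{lemma:transferclassicalisation} at the points $w=w'$ (using that both $\phi$ and $\phi'$ are monotone), I obtain the two-valued valuation $v^{\mathsf{cl}}$ defined by~\eqref{equ:classicalisation} satisfying $v^{\mathsf{cl}}(\phi,w)=1$ and $v^{\mathsf{cl}}(\phi',w)=0$. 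By the structural observation of the previous paragraph, $v^{\mathsf{cl}}$ read classically yields $v^{\mathsf{cl}}(\phi\rightarrow\phi',w)=0$, so $\mathfrak{F},w\not\models_{\mathbf{K}}\phi\rightarrow\phi'$.

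Because the real work is already absorbed into Lemma~\ref{lemma:transferclassicalisation}, I do not expect a genuine obstacle. The only point that deserves a line of care is the coincidence of the $\{0,1\}$-restricted $\KbiG$ semantics with classical Kripke semantics on a crisp frame, which is verified by inspection of the G\"odel operations and the $\inf$/$\sup$ clauses; crispness of $\mathfrak{F}$ is essential here, since on a fuzzy frame the modal clauses would not collapse to the Boolean quantifiers even for $\{0,1\}$-valued propositional valuations.
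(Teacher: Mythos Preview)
Your proof is correct and follows exactly the paper's approach: the paper's own proof is the single line ``Immediately from lemma~\ref{lemma:transferclassicalisation},'' and you have simply unpacked that line by spelling out the easy direction (classical valuations are special $\KbiG$ valuations on crisp frames) and instantiating the lemma with $w=w'$ and $x'=v(\phi',w)$ for the harder one.
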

\begin{proof}
Immediately from Lemma~\ref{lemma:transferclassicalisation}.
\end{proof}

The final transfer result we are going to discuss in this section is that Sahlqvist formulas are transferrable. We recall the definition from~\cite{BlackburndeRijkeVenema2010}.
\begin{definition}\label{def:Sahlqvistformulas}
A \emph{Sahlqvist implication} ($\mathsf{SI}$) is a formula $\phi\rightarrow\chi$ with
\[\mathsf{SI}\ni\phi\coloneqq l\in\Prop\cup\{{\sim}p:p\in\Prop\}\cup\{\underbrace{\Box\ldots\Box}_{k\text{ times}} p:p\in\Prop,k\in\mathbb{N}\}\mid\phi\wedge\phi\mid\phi\vee\phi\mid\lozenge\phi\]
and $\chi$ being \emph{positive}. \emph{Sahlqvist formulas} ($\mathsf{SF}$) are obtained using the following grammar:
\[\mathsf{SF}\ni\psi,\psi'\coloneqq\tau\in\mathsf{SI}\mid\psi\wedge\psi\mid\psi\vee\psi'~(\mathsf{Var}(\psi)\cap\mathsf{Var}(\psi')=\varnothing)\mid\Box\psi\]
\end{definition}
\begin{theorem}\label{prop:Sahlqvisttransfer}
Sahlqvist formulas are transferrable.
\end{theorem}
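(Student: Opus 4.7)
The plan is to split the argument into two steps: first show every Sahlqvist implication $\sigma \rightarrow \chi$ is locally transferrable, then obtain the general result by closure under $\wedge$, disjoint-variable $\vee$, and $\Box$ via Theorem~\ref{prop:transferclosure}. The classical-to-Gödel direction is immediate, since a $\{0,1\}$-valued $\KbiG$ valuation is nothing but a classical one; so the work is in the converse direction.

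For a Sahlqvist implication refuted at $w$ by some Gödel valuation $v$, we have $v(\sigma,w) > v(\chi,w)$. The idea is to use an intermediate ``threshold classicalisation'' sharper than the one of lemma~\ref{lemma:transferclassicalisation}: pick any $x' \in (v(\chi,w), v(\sigma,w))$ and set
\[v^{x'}(p,u) = 1 \text{ iff } v(p,u) > x', \quad v^{x'}(p,u) = 0 \text{ otherwise.}\]
Note $0 \leq v(\chi,w) < x' < v(\sigma,w) \leq 1$, so in particular $x' \in (0,1)$. I then prove two inductive lemmas simultaneously, mirroring the fact that antecedents and consequents play opposite roles:
\begin{description}
\item[(a)] For every Sahlqvist antecedent $\sigma$ and every state $u$, if $v(\sigma,u) > x'$ then $v^{x'}(\sigma,u) = 1$.
\item[(b)] For every positive $\chi$ and every state $u$, if $v(\chi,u) < x'$ then $v^{x'}(\chi,u) = 0$.
\end{description}
Applied at $w$, these yield $v^{x'}(\sigma,w) = 1$ and $v^{x'}(\chi,w) = 0$, hence a classical refutation of $\sigma \rightarrow \chi$ at $w$ on the same frame.

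The inductions are straightforward once one observes the key asymmetries provided by strict inequality. For (a), the non-monotone atoms are handled as follows: literals ${\sim}p$ are Gödel-valued in $\{0,1\}$ and $x'<1$, so $v({\sim}p,u) > x'$ forces $v(p,u)=0$ and hence $v^{x'}({\sim}p,u)=1$; and for $\underbrace{\Box\cdots\Box}_k p$, $\inf\{v(p,u') : uR^k u'\} > x'$ forces every such $v(p,u') > x'$. The inductive steps for $\wedge$, $\vee$, and $\lozenge$ in the antecedent use that $\sup > x'$ guarantees an element strictly above $x'$. For (b), in the crucial $\Box\chi$ case, $\inf < x'$ guarantees some successor $u'$ with $v(\chi,u') < x'$, while in the $\lozenge\chi$ case $\sup < x'$ forces every successor value to be $< x'$. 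Both directions work precisely because $x'$ is chosen strictly inside the open interval $(v(\chi,w), v(\sigma,w))$, which gives enough slack for the non-attained infima and suprema to behave correctly.

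Given step one, step two is pure bookkeeping: Definition~\ref{def:Sahlqvistformulas} constructs Sahlqvist formulas from Sahlqvist implications via $\wedge$, disjoint-variable $\vee$, and $\Box$, and Theorem~\ref{prop:transferclosure} says these three operations preserve local transferability. The main obstacle, and the only subtle point, is the choice of the threshold $x'$ together with the correct formulation of (a) and (b) using strict inequalities; once the statement is set up this way, every inductive clause follows from the order-theoretic properties of $\inf$ and $\sup$ that were already exploited in lemma~\ref{lemma:transferclassicalisation}, now extended to the mixed $\Box^k p$/${\sim}p$ literals permitted in Sahlqvist antecedents.
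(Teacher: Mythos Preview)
Your proposal is correct and follows essentially the same approach as the paper: reduce to Sahlqvist implications via Theorem~\ref{prop:transferclosure}, then prove a two-sided threshold-classicalisation lemma (antecedent above the threshold $\Rightarrow$ value~$1$; positive consequent below the threshold $\Rightarrow$ value~$0$). The only difference is that you place $x'$ strictly between $v(\chi,w)$ and $v(\sigma,w)$ and use strict inequalities on both sides, whereas the paper sets $x'=v(\chi,w)$ and uses $\leq$ for the consequent; your variant is harmless and in fact handles the $\Box$-case on the consequent side more cleanly when the infimum is not attained.
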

\begin{proof}
By Theorem~\ref{prop:transferclosure}, it suffices to prove the statement only for Sahlqvist implications.

Let $\phi\rightarrow\chi\in\mathsf{SI}$. Assume that $e(\phi,w)>x'$ and $e(\chi,w')=x'\neq1$.

We show by induction on the total number of connectives that
\begin{align*}
e(\phi,u)>x'&\Rightarrow e^\mathsf{cl}(\phi,u)=1\tag{$\phi$ as in definition~\ref{def:Sahlqvistformulas}}\\
e(\chi,u')\leqslant x'&\Rightarrow e^\mathsf{cl}(\chi,u')=0\tag{$\chi$ is positive}
\end{align*}
The basis case of variables and constants is straightforward.

\fbox{$e({\sim}p,w)=1$, $e(\chi,w')<1$}
\begin{align*}
e({\sim}p,w)=1\text{ and }e(\chi,w')<1&\Rightarrow e(p,w)=0\text{ and }e(\chi,w')=x'<1\\
&\Rightarrow e^\mathsf{cl}(p,w)=0\text{ and }e^\mathsf{cl}(\chi,w')=0\tag{by IH since $p$ is positive}\\
&\Rightarrow e^\mathsf{cl}({\sim}p,w)=1\text{ and }e^\mathsf{cl}(\chi,w')=0
\end{align*}

The cases of propositional connectives as well as $\phi=\underbrace{\Box\ldots\Box}_{k\text{ times}}p$ are easy as well.

\fbox{$e(\lozenge\phi',w)>x'$, $e(\chi,w')=x'<1$}
\begin{align*}
e(\lozenge\phi',w)>x'\text{ and }e(\chi,w')=x'<1&\Rightarrow\sup\{e(\phi',u):wRu\}>x'\text{ and }e(\chi,w')=x'\\
&\Rightarrow\exists u:wRu\text{ and }e(\phi',u)>x'\text{ and }e(\chi,w')=x'\\
&\Rightarrow\exists u:wRu\text{ and }e^\mathsf{cl}(\phi',u)=1\text{ and }e^\mathsf{cl}(\chi,w')=0\tag{by IH}\\
&\Rightarrow e^\mathsf{cl}(\lozenge\phi',w)=1\text{ and }e^\mathsf{cl}(\chi,w')=0
\end{align*}
\end{proof}

Note that the two classes of transferrable formulas in theorems~\ref{prop:transfermonotone} and~\ref{prop:Sahlqvisttransfer} do not coincide as there are Sahlqvist implications that are not monotone and there are implications of monotone formulas that are not Sahlqvist. Note, furthermore, that the above theorems \emph{do not} characterise the class of transferrable formulas completely: for example, we can show that the G\"{o}del-L\"{o}b formula $\Box(\Box p\rightarrow p)\rightarrow\Box p$ is transferrable, even though it is neither monotone, nor Sahlqvist, nor obtained from transferrable formulas via Theorem~\ref{prop:transferclosure}.
\begin{proposition}\label{prop:GL}
Let $\mathfrak{F}=\langle W,R\rangle$ be a crisp frame. Then, $\mathfrak{F},w\models_{\KbiG}\Box(\Box p\rightarrow p)\rightarrow\Box p$ iff $R$ is transitive and does not contain an infinite chain $wRw_0Rw_1Rw_2R\ldots$ originating from $w$ (i.e., conversely well-founded).
\end{proposition}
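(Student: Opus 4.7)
The plan is to split the biconditional. For the $(\Rightarrow)$ direction, I would reduce to the classical case: if $\mathfrak{F},w\models_{\KbiG}\Box(\Box p\rightarrow p)\rightarrow\Box p$, then in particular this holds under every $\{0,1\}$-valued valuation on $\mathfrak{F}$. On a crisp frame the G\"{o}del connectives restrict to the classical Boolean ones on $\{0,1\}$, and the clauses of Definition~\ref{def:KbiGsemantics} for $\Box$ and $\lozenge$ reduce to the standard Kripke clauses. Hence $\mathfrak{F},w\models_{\mathbf{K}}\Box(\Box p\rightarrow p)\rightarrow\Box p$, and the classical G\"{o}del--L\"{o}b correspondence yields both transitivity of $R$ on the subframe generated by $w$ and the absence of an infinite chain $wRw_0Rw_1R\ldots$ from $w$.

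For the $(\Leftarrow)$ direction, I would fix an arbitrary valuation $v$, set $a=v(\Box(\Box p\rightarrow p),w)$, and aim to show $v(\Box p,w)\geq a$, which via the G\"{o}del clause for $\rightarrow_\mathsf{G}$ gives $v(\Box(\Box p\rightarrow p)\rightarrow\Box p,w)=1$. It suffices to prove the subclaim that $v(p,u)\geq a$ for every $u\in R(w)$. The hypothesis that no infinite chain originates from $w$, combined with transitivity, ensures that $R$ restricted to $R(w)$ has no infinite ascending chain, so conversely well-founded induction on $R(w)$ is available. In the inductive step at $u\in R(w)$, transitivity gives $R(u)\subseteq R(w)$; the inductive hypothesis then yields $v(p,u')\geq a$ for every $u'$ with $uRu'$, so $v(\Box p,u)\geq a$. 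From $wRu$ and the definition of $a$ we also have $v(\Box p\rightarrow p,u)\geq a$. A case split on the clause for $\rightarrow_\mathsf{G}$ completes the step: if $v(\Box p,u)\leq v(p,u)$, then $v(p,u)\geq v(\Box p,u)\geq a$; otherwise $v(\Box p\rightarrow p,u)=v(p,u)\geq a$ directly. Taking the infimum over $u\in R(w)$ yields $v(\Box p,w)\geq a$, as required.

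The main subtlety is the inductive step in $(\Leftarrow)$. Classically, $v(\Box p\rightarrow p,u)=1$ combined with $v(\Box p,u)=1$ immediately forces $v(p,u)=1$; in the G\"{o}del setting the two-clause definition of $\rightarrow_\mathsf{G}$ forces the case analysis above, and this is precisely where both transitivity (needed to carry the IH from the successors of $u$ back to $u$ itself via $R(u)\subseteq R(w)$) and converse well-foundedness (needed to legitimise the induction) come into play. The classical-reduction side of the argument, by contrast, is mostly bookkeeping once one notes that $\{0,1\}$-valued valuations are G\"{o}del valuations and that the modal clauses on crisp frames coincide with the classical Kripke clauses on such valuations.
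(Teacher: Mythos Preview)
Your proposal is correct and follows the same strategy as the paper: classical reduction via $\{0,1\}$-valued valuations for $(\Rightarrow)$, and for $(\Leftarrow)$ an induction along the conversely well-founded relation on $R(w)$ establishing $v(p,u)\geq a$ for every $u\in R(w)$, with the inductive step driven by transitivity (giving $R(u)\subseteq R(w)$) and the two-case analysis of $\rightarrow_\mathsf{G}$. Your use of abstract well-founded induction is slightly cleaner than the paper's stratification of $R(w)$ into finite-rank layers $W_{-n}$ (which tacitly assumes every state has finite $R$-rank), but the underlying argument is the same.
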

\begin{proof}
Since $\KbiG$ valuations preserve classical values, we only prove the ‘only if’ direction. Let $\mathfrak{F}=\langle W,R\rangle$ be a crisp frame s.t.\ $R$ is transitive and conversely well founded. We let
\[e(\Box(\Box p\rightarrow p),w)=x>0\]
for some $w\in\mathfrak{F}$ and $e$ on $\mathfrak{F}$. Then, for every $w'\in R(w)$, it holds either $e(p,w)\geq x$ or $e(\Box p,w')\leq e(p,w')$.

Recall that $R$ does not have infinite chains beginning from $w$. Thus, $e(p,w'')\geq x$ for every $w''\in R(w)$ s.t.\ $R(w'')=\varnothing$ because $e(\Box p,w'')=1$ for every such $w''$. Denote the set of these states with $W_0$.

In general, for every $n\in\mathbb{N}$, we define $W_{-n}$ to be the set of all $t\in R(w)$ s.t.\ the longest $R$-sequence originating from $t$ has $n$ members.

It is clear that $w\in W_{-(k+1)}$ for some $k\in\mathbb{N}$ and that $R(w)=\bigcup\limits^{k}_{i=0}W_{-i}$. We show by induction on $k$ that $e(p,u)\geq x$ for every $u\in\bigcup\limits^{k}_{i=0}W_{i-1}$. The basis case is already shown. Assume that the statement holds for some $l$. We show it for $l+1$ and reason for a contradiction. Let $u'\in W_{-(l+1)}$ and $e(p,u')<x$. But then, since $R$ is transitive and irreflexive, we have $e(\Box p,u')\geq x$ by the induction hypothesis. Hence, $e(\Box p\rightarrow p,u')<x$ and further, $e(\Box(\Box p\rightarrow p),w)<x$, contrary to the assumption.

Thus, $e(p,u)\geq x$ for every $u\in R(w)$. But then, $e(\Box p,w)\geq x$, as required.
\end{proof}
\subsection[Glivenko's theorem]{Glivenko's theorem and its relatives}
In this section, we study the fragments of $\KbiG$ and $\KGsquare$ that admit Glivenko's theorem~\cite{Glivenko1929} that we present in its semantical form.
\begin{theorem}
$\phi$ is a classical propositional tautology iff ${\sim\sim}\phi$ is a (super-)intuitionistically valid propositional formula.
\end{theorem}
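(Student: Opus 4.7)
The plan is to prove each direction separately. The right-to-left direction is immediate: every classical two-valued model is a Heyting-algebra valuation (on the two-element Boolean algebra), so intuitionistic validity of ${\sim\sim}\phi$ entails classical validity of ${\sim\sim}\phi$, which is classically equivalent to $\phi$.

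For the left-to-right direction I would argue algebraically, using soundness and completeness of intuitionistic propositional logic with respect to Heyting algebras. The key ingredient is the standard fact that, in any Heyting algebra $H$, the subset of \emph{regular elements} $H_r := \{a \in H : {\sim\sim}a = a\}$ forms a Boolean algebra when equipped with $\wedge$, $\rightarrow$, $\sim$ inherited from $H$ and with Boolean join $a \sqcup b := {\sim\sim}(a \vee b)$. Moreover, the map ${\sim\sim} : H \to H_r$ satisfies ${\sim\sim}(a \wedge b) = {\sim\sim}a \wedge {\sim\sim}b$, ${\sim\sim}(a \rightarrow b) = {\sim\sim}a \rightarrow {\sim\sim}b$, ${\sim\sim}{\sim}a = {\sim}a$, and ${\sim\sim}(a \vee b) = {\sim\sim}a \sqcup {\sim\sim}b$.

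Now fix a classical tautology $\phi$, an arbitrary Heyting algebra $H$, and an arbitrary valuation $v$ on $H$. Define $v' : \Prop \to H_r$ by $v'(p) := {\sim\sim}v(p)$ and extend it to all formulas using the Boolean operations of $H_r$. A straightforward induction on $\phi$, using the identities above, shows $v'(\phi) = {\sim\sim}v(\phi)$. Since $H_r$ is Boolean and $\phi$ is a classical tautology, $v'(\phi) = 1$, hence $v({\sim\sim}\phi) = 1$. As $H$ and $v$ were arbitrary, completeness yields that ${\sim\sim}\phi$ is intuitionistically valid.

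The main obstacle is the disjunction case in the induction, because ${\sim\sim}$ fails to commute with $\vee$ in $H$ in general. This is precisely where one must interpret $v'$ using the Boolean join $\sqcup$ of $H_r$ rather than the Heyting join $\vee$ of $H$; the identity ${\sim\sim}(a \vee b) = {\sim\sim}a \sqcup {\sim\sim}b$ then delivers the inductive step and makes the argument go through uniformly.
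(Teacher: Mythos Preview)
Your proof is correct and is essentially the standard algebraic argument for Glivenko's theorem via the Boolean algebra of regular elements of a Heyting algebra. One small remark: the identity ${\sim\sim}(a\rightarrow b)={\sim\sim}a\rightarrow{\sim\sim}b$ that you invoke is not entirely trivial, and it would be worth spelling out (it follows from $a\rightarrow{\sim\sim}b={\sim}(a\wedge{\sim}b)$ together with ${\sim\sim}(a\wedge{\sim}b)={\sim\sim}a\wedge{\sim}b$), but it does hold in every Heyting algebra, so the induction goes through as you claim.

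As for comparison with the paper: there is nothing to compare. The paper does not prove this theorem at all; it merely records it, with a citation to Glivenko's 1929 paper, as the classical background result whose modal analogues are then investigated in the remainder of the section. So you have supplied a proof where the paper only gives a reference.
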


Glivenko's theorem in non-intermediate propositional logics is well studied (cf.,~e.g.~\cite{Ono2009} and the literature referred to therein). It is also known~\cite{Kleene1952} that the theorem holds for the $\exists$ fragment of the first-order intuitionistic logic. Furthermore, versions of Glivenko's theorem for modal intuitionistic logics are studied in~\cite{Bezhanishvili2001}.

Considering $\KbiG$ and $\KbiG^\mathsf{f}$, we, first, notice that the unrestricted version of Glivenko's theorem (unsurprisingly) fails: ${\sim\sim}\Box(p\vee{\sim}p)$ is not $\KbiG$ valid. In fact, it is easy to see that it defines finitely branching\footnote{A \emph{crisp} frame $\langle W,R\rangle$ is finitely branching iff $R(w)$ is finite for every $w\in W$. A \emph{fuzzy} frame is finitely branching iff $R^+(w)$ is finite for every $w$.} frames.
\begin{proposition}\label{prop:glivenkocounterexamplefb}
A (crisp or fuzzy) frame $\mathfrak{F}$ is finitely branching iff $\mathfrak{F}\models_{\KbiG}{\sim}{\sim}\Box(p\vee{\sim}p)$.
\end{proposition}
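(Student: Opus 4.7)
The plan is to unpack the semantics carefully and then split into the two directions. First, I would observe that under any valuation $v$ and any state $w'$, the value $v(p\vee{\sim}p, w')$ is always strictly positive: if $v(p,w') = 0$ then ${\sim}v(p,w')=1$ so the disjunction equals $1$, and otherwise the disjunction equals $v(p,w')>0$. Since for any $x\in[0,1]$ we have ${\sim\sim}x=1$ iff $x>0$ (and $0$ otherwise), the formula ${\sim\sim}\Box(p\vee{\sim}p)$ fails at $w$ precisely when $v(\Box(p\vee{\sim}p),w)=0$, which reduces the whole problem to controlling when this infimum can be $0$.

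For the ``finitely branching $\Rightarrow$ valid'' direction I would fix any $w\in\mathfrak{F}$ and analyse $v(\Box(p\vee{\sim}p), w)$. In the crisp case, if $R(w)=\varnothing$ the value is $1$ by convention; otherwise the infimum is a minimum over finitely many strictly positive values, hence strictly positive. In the fuzzy case, every $w'$ with $wRw'=0$ contributes $1$ to the infimum (since $0\rightarrow_\mathsf{G}x=1$), so the infimum effectively ranges over $R^+(w)$; as this set is finite and each of its terms is either $1$ or equals $v(p\vee{\sim}p,w')>0$, the infimum is again strictly positive.

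For the converse I argue by contrapositive: assume some $w\in\mathfrak{F}$ has $R(w)$ (crisp case) or $R^+(w)$ (fuzzy case) infinite, and enumerate the relevant successors as $w_1,w_2,\ldots$ In the crisp case, set $v(p,w_i)=1/(i+1)$; then $v(p\vee{\sim}p,w_i)=1/(i+1)\to 0$, so $v(\Box(p\vee{\sim}p),w)=0$. In the fuzzy case, this direct choice does not work, because whenever $1/(i+1)\geq wRw_i$ the G\"odel implication $wRw_i\rightarrow_\mathsf{G}1/(i+1)$ collapses to $1$ and contributes nothing to the infimum. The fix is to take $v(p,w_i)=\min(1/(i+1),\,wRw_i/2)$, which is strictly positive (so $v(p\vee{\sim}p,w_i)=v(p,w_i)$), strictly below $wRw_i$ (so the implication evaluates to $v(p,w_i)$), and bounded above by $1/(i+1)$, forcing the infimum to $0$.

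The only real subtlety is this last point in the fuzzy direction: one must simultaneously keep $v(p,w_i)$ strictly positive (so that $p\vee{\sim}p$ evaluates to $v(p,w_i)$ rather than to $1$) and strictly below $wRw_i$ (so that the G\"odel implication evaluates non-trivially rather than collapsing to $1$). Every other step is a direct computation from the semantics of ${\sim}$, $\vee$, $\rightarrow_\mathsf{G}$, and $\Box$.
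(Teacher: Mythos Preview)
Your proposal is correct and follows essentially the same approach as the paper: both directions hinge on the observation that ${\sim\sim}\Box(p\vee{\sim}p)$ fails at $w$ exactly when the infimum defining $\Box(p\vee{\sim}p)$ is $0$, and both construct a valuation witnessing this in the infinitely-branching case. The only noteworthy difference is in the fuzzy converse: the paper uses a recursive definition (essentially $v(p,w_{i+1})=\min(v(p,w_i),\,wRw_{i+1})/2$, after ordering the $w_i$ by decreasing edge weight), whereas your explicit choice $v(p,w_i)=\min(1/(i+1),\,wRw_i/2)$ is simpler and avoids both the recursion and the preliminary reordering of successors.
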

\begin{proof}
We show only the fuzzy case as the crisp one can be proven in a similar manner.

Assume that $\mathfrak{F}$ is finitely branching. Then, clearly, $e(\Box(p\vee{\sim}p),w)=\max\{R(w,w')\rightarrow_\mathsf{G}e(p\vee{\sim}p,w'):w'\in W\}>0$. Hence, $e({\sim}{\sim}\Box(p\vee{\sim}p),w)=1$.

Now let $\mathfrak{F}$ be infinitely branching, let $X\subseteq R^+(w)$ be countable and w.l.o.g.\ $R(w,w_i)\geqslant R(w,w_j)$ iff $i<j$ for every $w_i,w_j\in X$. We define $e(p,w_1)=\frac{R(w,w_1)}{2}$ and
\begin{align*}
e(p,w_{i+1})&=
\begin{cases}
\dfrac{e(p,w_i)}{2}&\text{ iff }e(p,w_i)\leqslant R(w,w_{i+1})\\
\dfrac{R(w,w_{i+1})}{2}&\text{ otherwise}
\end{cases}
\end{align*}
It is clear that $e({\sim}p,w_i)=0$ and that $e(p\vee{\sim}p,w_i)=e(p,w_i)$ for every $w_i\in R(w)$.

Observe that $\inf\{R(w,w')\rightarrow_{\mathsf{G}}e(p,w'):w\in W\}=0$. Thus, $e(\Box(p\vee{\sim}p),w)=0$, and thus $e({\sim}{\sim}\Box(p\vee{\sim}p),w)=0$, as required.
\end{proof}

In what follows, we will show that Glivenko's theorem holds in all finitely branching frames, and that, conversely, if Glivenko's theorem holds for a~logic of a~class of frames $\mathbb{F}$, then $\mathbb{F}$ does not contain infinitely branching frames. For this, we require some preliminary definitions and statements.
\begin{definition}[Logic of $\mathbb{F}$]
Let $\mathbb{F}$ be a class of frames. A \emph{$\KbiG$ ($\KGsquare$) logic of $\mathbb{F}$} is a set $\mathsf{L}\subseteq\bimodalLtriangle$ ($\mathsf{L}\subseteq\bimodalLtrianglesquare$) s.t.\ $\mathfrak{F}\models_{\KbiG}\mathsf{L}$ ($\mathfrak{F}\models_{\KGsquare}\mathsf{L}$) for every $\mathfrak{F}\in\mathbb{F}$.
\end{definition}
\begin{definition}\label{def:01valuation}
For any model $\mathfrak{M}=\langle W,R,e\rangle$, define a model $\mathfrak{M}^\mathsf{cl}=\langle W,R^\mathsf{cl},e^\mathsf{cl}\rangle$ s.t.
\begin{align*}
wR^\mathsf{cl}w'&=\begin{cases}
1&\text{ iff }wRw'\neq0\\
0&\text{ iff }wRw'=0
\end{cases}
&e^\mathsf{cl}(p,w)&=
\begin{cases}
1&\text{ iff }e(p,w)\neq0\\
0&\text{ iff }e(p,w)=0
\end{cases}
\end{align*}
For any frame $\mathfrak{F}=\langle W,R\rangle$, we set $\mathfrak{F}^\mathsf{cl}=\langle W,R^\mathsf{cl}\rangle$.
\end{definition}
\begin{lemma}\label{lemma:0isclassical}
Let $\phi$ be a formula over $\{\mathbf{0},\wedge,\vee,\rightarrow,\Box,\lozenge\}$. Then for any finitely branching frame $\mathfrak{F}$ and for any $e$ on $\mathfrak{F}$, it holds that
\begin{align}
e^\mathsf{cl}(\phi,w)&=
\begin{cases}
1&\text{ iff }e(\phi,w)\neq0\\
0&\text{ iff }e(\phi,w)=0
\end{cases}
\end{align}
\end{lemma}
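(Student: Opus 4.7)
The plan is to proceed by structural induction on $\phi$, treating the propositional cases first and then the modal cases where finite branching becomes essential.

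For the base cases, $\phi=p$ holds directly by the definition of $v^{\mathsf{cl}}$ in Definition~\ref{def:01valuation}, and $\phi=\mathbf{0}$ is immediate since both $v(\mathbf{0},w)$ and $v^{\mathsf{cl}}(\mathbf{0},w)$ are $0$. The cases $\phi=\chi\wedge\psi$ and $\phi=\chi\vee\psi$ follow routinely from the observation that $\min(x,y)=0$ iff $x=0$ or $y=0$, and dually $\max(x,y)=0$ iff $x=y=0$, together with the induction hypothesis. For $\phi=\chi\rightarrow\psi$, I use the fact that $v(\chi\rightarrow\psi,w)=0$ iff $v(\chi,w)>0=v(\psi,w)$, so by the induction hypothesis this is equivalent to $v^{\mathsf{cl}}(\chi,w)=1$ and $v^{\mathsf{cl}}(\psi,w)=0$, i.e., $v^{\mathsf{cl}}(\chi\rightarrow\psi,w)=0$.

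The modal cases are where finite branching does the work. For $\phi=\Box\chi$ on a crisp frame, $R^{\mathsf{cl}}=R$, and the key point is that $\inf\{v(\chi,w'):wRw'\}=0$ is equivalent to the \emph{existence} of some $w'\in R(w)$ with $v(\chi,w')=0$; this equivalence is precisely where finite branching of $R(w)$ is used (without it, one could have $v(\chi,w_i)=1/i$ making the infimum $0$ while no single value is $0$). For the fuzzy case of $\Box\chi$, an analogous argument works: $v(\Box\chi,w)=0$ iff some $w'$ satisfies $wRw'>0$ and $v(\chi,w')=0$, which uses that $R^{+}(w)$ is finite to exclude the infinitary phenomenon where all terms $wRw'\rightarrow_{\mathsf{G}}v(\chi,w')$ are positive but their infimum is $0$. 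Then by the induction hypothesis and the definition of $R^{\mathsf{cl}}$, this matches $v^{\mathsf{cl}}(\Box\chi,w)=0$. For $\phi=\lozenge\chi$, the supremum is $0$ iff every $w'$ either fails $wR^{+}w'$ or satisfies $v(\chi,w')=0$; here finite branching is not strictly needed but the statement holds regardless, again by the induction hypothesis.

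The main obstacle is the $\Box$ case: one must justify carefully that finite branching lets the infimum be attained (or at least behave classically with respect to being $0$). I would state this as a small observation before or within the induction step, noting that in a finite multiset of reals, the infimum equals $0$ iff one element equals $0$, and analogously for the fuzzy interpretation where $wRw'\rightarrow_{\mathsf{G}}v(\chi,w')=0$ iff $wRw'>0=v(\chi,w')$. Everything else in the induction is essentially bookkeeping.
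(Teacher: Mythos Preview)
Your proposal is correct and follows essentially the same route as the paper: a structural induction on $\phi$ with the base cases $p,\mathbf{0}$, the propositional cases $\wedge,\vee,\rightarrow$, and the modal cases $\Box,\lozenge$ handled exactly as you describe. If anything, you are slightly more precise than the paper in locating where finite branching is actually used (in the $\Box$ case, to pass from $\inf=0$ to the existence of a witness with value $0$) and in noting that the $\lozenge$ case does not require it.
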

\begin{proof}
We prove by induction. The cases when $\phi=p$ or $\phi=\mathbf{0}$ are trivial.

\fbox{$\phi=\psi\wedge\psi'$}
\begin{align*}
e(\psi\wedge\psi',w)=0&\text{ iff }e(\psi,w)=0\text{ or }e(\psi',w)=0\\
&\text{ iff }e^\mathsf{cl}(\psi,w)=0\text{ or }e^\mathsf{cl}(\psi',w)=0\tag{by IH}\\
&\text{ iff }e^\mathsf{cl}(\psi\wedge\psi',w)=0
\end{align*}

\fbox{$\phi=\psi\vee\psi'$} is dual.

\fbox{$\phi=\psi\rightarrow\psi'$}
\begin{align*}
e(\psi\rightarrow\psi',w)=0&\text{ iff }e(\psi,w)\neq0\text{ and }e(\psi',w)=0\\
&\text{ iff }e^\mathsf{cl}(\psi,w)=1\text{ or }e^\mathsf{cl}(\psi',w)=0\tag{by IH}\\
&\text{ iff }e^\mathsf{cl}(\psi\rightarrow\psi',w)=0
\end{align*}
\begin{align*}
e(\psi\wedge\psi',w)\neq0&\text{ iff }e(\psi',w)\neq0\\
&\text{ iff }e^\mathsf{cl}(\psi',w)=1\tag{by IH}\\
&\text{ iff }e^\mathsf{cl}(\psi\wedge\psi',w)=1
\end{align*}

\fbox{$\phi=\Box\psi$}
\begin{align*}
e(\Box\psi,w)=0&\text{ iff }\exists w':wRw'>0\text{ and }e(\psi,w')=0\\
&\text{ iff }\exists w':wRw'=1\text{ and }e^\mathsf{cl}(\psi,w')=0\tag{by IH}\\
&\text{ iff }e^\mathsf{cl}(\Box\psi,w)=0\tag{by finite branching}
\end{align*}

\fbox{$\phi=\lozenge\psi$}
\begin{align*}
e(\lozenge\psi,w)\neq0&\text{ iff }\exists w':wRw'>0\wedge e(\psi,w')\neq0\\
&\text{ iff }\exists w':wRw'=1\Rightarrow e^\mathsf{cl}(\psi,w')=1\tag{by IH}\\
&\text{ iff }e^\mathsf{cl}(\lozenge\psi,w)=1
\end{align*}
\end{proof}

The following unsurprising statement is immediate.
\begin{proposition}
Let $\phi\in\{\mathbf{0},\wedge,\vee,\rightarrow,\lozenge\}$. Then
\begin{enumerate}
\item $\phi$ is $\mathbf{K}$ valid iff ${\sim\sim}\phi$ is $\KbiG$ valid iff ${\sim\sim}\phi$ is $\KbiG^\mathsf{f}$ valid;
\item $\mathfrak{F}\models_{\mathbf{K}}\phi$ iff $\mathfrak{F}\models_{\KbiG}\phi$ for every crisp $\mathfrak{F}$.
\end{enumerate}
\end{proposition}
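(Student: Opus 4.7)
My plan is to derive both parts as corollaries of Lemma~\ref{lemma:0isclassical}, after the observation that its finite-branching hypothesis is used only in the $\Box$-step of the induction. For formulas in $\{\mathbf{0},\wedge,\vee,\rightarrow,\lozenge\}$ (no $\Box$), the $\lozenge$-clause ``$v(\lozenge\psi,w)\ne 0$ iff there is $w'$ with $wRw'>0$ and $v(\psi,w')\ne 0$'' goes through on any (crisp or fuzzy) frame. So the first step is to establish the following extension of Lemma~\ref{lemma:0isclassical}: on arbitrary frames and for this fragment, $v^\mathsf{cl}(\phi,w)=1$ iff $v(\phi,w)\ne 0$.

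For part~1 I would argue by a three-link cycle. If $\phi$ is $\mathbf{K}$-valid, then for any $\KbiG^\mathsf{f}$-model $\mathfrak{M}$ the classicalisation $\mathfrak{M}^\mathsf{cl}$ is a classical Kripke model, so $v^\mathsf{cl}(\phi,w)=1$ everywhere; the extended lemma gives $v(\phi,w)\ne 0$, whence $v({\sim\sim}\phi,w)=1$, so ${\sim\sim}\phi$ is $\KbiG^\mathsf{f}$-valid, and \emph{a fortiori} $\KbiG$-valid. Closing the cycle from $\KbiG$-validity of ${\sim\sim}\phi$ back to $\mathbf{K}$-validity of $\phi$ uses that any classical Kripke model can be viewed as a degenerate $\KbiG$-model with values in $\{0,1\}$, on which ${\sim\sim}\phi$ and $\phi$ coincide.

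For part~2 on a fixed crisp $\mathfrak{F}$, the ``$\Leftarrow$'' direction is immediate because classical Kripke models on $\mathfrak{F}$ are special $\KbiG$-models on $\mathfrak{F}$. For ``$\Rightarrow$'', applying the extended lemma to any $\KbiG$-valuation $v$ on $\mathfrak{F}$ shows at least that $v(\phi,w)\ne 0$. The remaining gap between $v(\phi,w)\ne 0$ and $v(\phi,w)=1$ is the main obstacle, since Lemma~\ref{lemma:0isclassical} does not distinguish positive values from $1$ in the presence of $\rightarrow$. My proposed remedy is a ``threshold'' classicalisation: given $\alpha=v(\phi,w)<1$, define $v^{>\alpha}(p,u)=1$ iff $v(p,u)>\alpha$ and show by induction on $\phi$ that $v^{>\alpha}(\phi,w)=0$, contradicting $\mathbf{K}$-validity on $\mathfrak{F}$. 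The cases $\mathbf{0},\wedge,\vee,\lozenge$ go through directly; the $\rightarrow$-step is the subtle point I expect to have to negotiate most carefully, possibly by refining the threshold per-subformula along the lines of the classicalisation used in Lemma~\ref{lemma:transferclassicalisation}.
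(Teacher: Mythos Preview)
Your treatment of part~1 matches the paper's: the single observation that the finite-branching hypothesis in Lemma~\ref{lemma:0isclassical} is invoked only in the $\Box$-case is all the paper offers, and your cycle through $\mathbf{K}$-validity, $\KbiG^\mathsf{f}$-validity of ${\sim\sim}\phi$, and $\KbiG$-validity of ${\sim\sim}\phi$ is exactly how that observation is meant to be deployed.

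For part~2 you correctly identify a genuine gap --- the lemma yields only $v(\phi,w)\neq 0$, not $v(\phi,w)=1$ --- and propose to close it with a threshold classicalisation $v^{>\alpha}$. This cannot succeed, because the $\rightarrow$-step is not merely ``subtle'' but fails outright, and in fact part~2 as stated is false. Take $\phi=p\vee(p\rightarrow\mathbf{0})$, which lies in the fragment and is $\mathbf{K}$-valid on every frame. With $v(p,w)=\tfrac12$ on any crisp $\mathfrak{F}$ we get $v(p\rightarrow\mathbf{0},w)=0$ and $v(\phi,w)=\tfrac12<1$, so $\mathfrak{F}\not\models_{\KbiG}\phi$. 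Your threshold at $\alpha=\tfrac12$ gives $v^{>1/2}(p,w)=0$, hence $v^{>1/2}(p\rightarrow\mathbf{0},w)=1$ and $v^{>1/2}(\phi,w)=1$: no classical refutation emerges. The intended two-sided invariant $v^{>\alpha}(\psi,u)=1\Leftrightarrow v(\psi,u)>\alpha$ breaks at $\rightarrow$ because $v(\psi\rightarrow\psi',u)>\alpha$ is not equivalent to $(v(\psi,u)>\alpha\Rightarrow v(\psi',u)>\alpha)$, and no per-subformula refinement in the style of Lemma~\ref{lemma:transferclassicalisation} helps, since that lemma is precisely restricted to monotone (i.e.\ $\rightarrow$-free) formulas. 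The paper's one-line proof does not justify part~2 as written either; presumably it is intended to read $\mathfrak{F}\models_{\KbiG}{\sim\sim}\phi$, in which case it follows from the same observation as part~1.
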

\begin{proof}
Note that in the proof of Lemma~\ref{lemma:0isclassical}, we use the finite branching only in the $\Box$ case but $\phi$ is $\Box$-free.
\end{proof}
\begin{theorem}\label{theorem:crispfbGlivenko}~
\begin{enumerate}
\item Let $\phi$ be a formula over $\{\mathbf{0},\wedge,\vee,\rightarrow,\Box,\lozenge\}$. Then it is $\mathbf{K}$-valid iff ${\sim\sim}\phi$ is $\KbiG$-valid ($\KbiG^\mathsf{f}$) on all finitely branching frames.
\item Let $\mathbb{F}$ be a class of (fuzzy or crisp) frames and let $\mathsf{L}$ be the $\KbiG$ logic of $\mathbb{F}$. Then, $\{{\sim\sim}\phi:\phi\text{ is }\mathbb{F}\models_{\mathbf{K}}\phi\}\subseteq\mathsf{L}$ implies that every $\mathfrak{F}\in\mathbb{F}$ is finitely branching.
\end{enumerate}
\end{theorem}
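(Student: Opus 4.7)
The plan is to combine Lemma~\ref{lemma:0isclassical}, the finite model property of $\mathbf{K}$, and Proposition~\ref{prop:glivenkocounterexamplefb}, which already pins down ${\sim\sim}\Box(p\vee{\sim}p)$ as a characterisation of finitely branching frames.

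For the forward direction of item~1, I would fix a finitely branching frame $\mathfrak{F}$ (crisp or fuzzy), a valuation $v$, and a world $w$, then pass to the classicalised model $\mathfrak{M}^{\mathsf{cl}}$ of Definition~\ref{def:01valuation}. Because $R^{\mathsf{cl}}(w)=R^+(w)$, this is a crisp finitely branching frame equipped with a $\{0,1\}$-valued valuation, i.e.\ literally a classical Kripke model. Since $\phi$ is $\mathbf{K}$-valid we obtain $v^{\mathsf{cl}}(\phi,w)=1$, and Lemma~\ref{lemma:0isclassical} then delivers $v(\phi,w)\neq 0$, whence $v({\sim\sim}\phi,w)=1$.

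For the reverse direction of item~1, if $\phi$ is not $\mathbf{K}$-valid, the finite model property of $\mathbf{K}$ supplies a finite (hence finitely branching) classical countermodel $\langle W,R,V\rangle$ refuting $\phi$ at some $w$. Viewing $V$ as a $\{0,1\}$-valued $\KbiG$ (equivalently $\KbiG^{\mathsf{f}}$, by reading the crisp $R$ as a $\{0,1\}$-valued fuzzy relation) valuation on the same frame, a routine induction on formulas over $\{\mathbf{0},\wedge,\vee,\rightarrow,\Box,\lozenge\}$ shows that the G\"odel clauses collapse to the classical ones on $\{0,1\}$. Hence $v(\phi,w)=0$, so $v({\sim\sim}\phi,w)=0$, contradicting the assumed validity of ${\sim\sim}\phi$ on all finitely branching frames.

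For item~2, I would instantiate the hypothesis at the single formula $\phi\coloneqq\Box(p\vee{\sim}p)$, which lies in the language (since ${\sim}p$ abbreviates $p\rightarrow\mathbf{0}$) and is $\mathbf{K}$-valid on every frame, so trivially $\mathbb{F}\models_\mathbf{K}\phi$. The assumption then yields ${\sim\sim}\phi\in\mathsf{L}$, i.e.\ $\mathfrak{F}\models_{\KbiG}{\sim\sim}\Box(p\vee{\sim}p)$ for every $\mathfrak{F}\in\mathbb{F}$, and Proposition~\ref{prop:glivenkocounterexamplefb} forces each such $\mathfrak{F}$ to be finitely branching. The only mild subtlety, rather than a real obstacle, is keeping the crisp and fuzzy cases uniform in item~1: this is handled by the fact that Lemma~\ref{lemma:0isclassical} is stated for both, and that a finite crisp classical countermodel is also a finitely branching fuzzy one.
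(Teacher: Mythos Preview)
Your proposal is correct and follows essentially the same approach as the paper: both directions of item~1 rest on Lemma~\ref{lemma:0isclassical} together with the finite model property of $\mathbf{K}$ (the paper phrases the $\mathbf{K}$-valid $\Rightarrow$ $\KbiG$-valid direction by contraposition rather than directly, but the content is identical), and item~2 is handled exactly as you do, by instantiating at $\Box(p\vee{\sim}p)$ and invoking Proposition~\ref{prop:glivenkocounterexamplefb}.
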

\begin{proof}
We begin with 1. Clearly, if $\phi$ is not valid in $\mathbf{K}$, there is a finite branching frame where it is invalidated by a~classical valuation. But classical valuations are preserved in $\KbiG$.

For the converse, let ${\sim\sim}\phi$ be not $\KbiG$-valid on some finitely branching frame $\mathfrak{F}$. Then, there exist $w\in\mathfrak{F}$ and $e$ on $\mathfrak{F}$ s.t.\ $e({\sim\sim}\phi,w)\neq1$. But then, $e(\phi,w)=0$. Hence, by Lemma~\ref{lemma:0isclassical}, we have a~classical valuation $e^\mathsf{cl}$ on $\mathfrak{F}^\mathsf{cl}$ s.t. $e^\mathsf{cl}(\phi,w)=0$. The result follows.

Consider 2. We reason by contraposition. Assume that $\mathbb{F}$ contains some infinitely branching frame $\mathfrak{F}$. But then $\mathfrak{F}\not\models_{\KbiG}{\sim\sim}\Box(p\vee{\sim}p)$. Thus, $\{{\sim\sim}\phi:\phi\text{ is }\mathbf{K}\text{ valid on }\mathbb{F}\}\not\subseteq\mathsf{L}$ as required.
\end{proof}

By conservativity (Proposition~\ref{prop:conservativity}), the above result extends to $\KGsquare$. Moreover, we can obtain a~result similar to Theorem~\ref{theorem:crispfbGlivenko} but with adding $\neg{\sim}$ on top of formulas instead of ${\sim\sim}$. This can be considered as a counterpart of Glivenko's theorem for $\mathsf{I}_4\mathsf{C}_4$ and its extensions\footnote{Recall that $\Gsquare$ is a linear extension of $\mathsf{I}_4\mathsf{C}_4$.} for it holds for every $\phi$ over $\{\mathbf{0},\wedge,\vee,\rightarrow\}$ that $\phi$ is classically valid iff $\neg{\sim}\phi$ is valid in $\mathsf{I}_4\mathsf{C}_4$.
\begin{theorem}\label{theorem:KGsquareglivenko}~
\begin{enumerate}
\item Let $\phi$ be a formula over $\{\mathbf{0},\wedge,\vee,\rightarrow,\Box,\lozenge\}$. Then it is $\mathbf{K}$-valid iff $\neg{\sim}\phi$ is $\KGsquare$-valid on all finitely branching crisp frames.
\item Let $\mathbb{F}$ be a class of crisp frames, and let $\mathsf{L}$ be the $\KGsquare$ logic of $\mathbb{F}$. Then, $\{\neg{\sim}\phi:\mathbb{F}\models_{\mathbf{K}}\phi\}\subseteq\mathsf{L}$ implies that every $\mathfrak{F}\in\mathbb{F}$ is finitely branching.
\end{enumerate}
\end{theorem}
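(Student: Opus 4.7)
The plan is to mirror Theorem~\ref{theorem:crispfbGlivenko}, adapted to the fact that $\neg{\sim}$ engages the second valuation $v_2$ of $\KGsquare$ rather than a double ${\sim}$ applied to $v_1$. The key preliminary observation is that by the $\KGsquare$ clauses of Definition~\ref{def:KG2semantics}, $v_1(\neg{\sim}\phi,w)=v_2({\sim}\phi,w)=1\coimplies_\mathsf{G}v_2(\phi,w)$, which equals $1$ exactly when $v_2(\phi,w)<1$. Together with Proposition~\ref{prop:+isenough} this reduces $\KGsquare$-validity of $\neg{\sim}\phi$ on $\mathfrak{F}$ to the condition that $v_2(\phi,w)<1$ for every $\KGsquare$ model on $\mathfrak{F}$ and every $w\in\mathfrak{F}$.

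The core technical step is a ``$v_2$-analogue'' of Lemma~\ref{lemma:0isclassical}: given a $\KGsquare$ model on a crisp frame, define a classical $\mathbf{K}$ valuation $v^\mathsf{cl}$ on the same frame by $v^\mathsf{cl}(p,w)\coloneqq 0$ if $v_2(p,w)=1$ and $v^\mathsf{cl}(p,w)\coloneqq 1$ otherwise. The claim is that $v^\mathsf{cl}(\phi,w)=0$ iff $v_2(\phi,w)=1$ whenever $\phi$ is in the fragment $\{\mathbf{0},\wedge,\vee,\rightarrow,\Box,\lozenge\}$ and the frame is finitely branching. I would prove this by induction on $\phi$ reading off the $v_2$ clauses: the Boolean cases are routine because under this transformation $v_2$ on $\wedge,\vee,\rightarrow,\mathbf{0}$ matches classical $\mathbf{K}$ on $v^\mathsf{cl}$ (in particular $v_2(\psi\rightarrow\chi)=v_2(\chi)\coimplies_\mathsf{G}v_2(\psi)=1$ iff $v_2(\chi)=1$ and $v_2(\psi)<1$, and $v_2(\mathbf{0},w)=1$ for all $w$), while the $\lozenge$ case handles $\inf$ automatically since $\inf=1$ forces every value to be $1$. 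The critical and only non-routine step is $\Box$: here $v_2(\Box\psi,w)=\sup\{v_2(\psi,w'):wRw'\}$, and the equivalence ``$\sup=1$ iff some $w'\in R(w)$ attains $v_2(\psi,w')=1$'' is where the finite-branching hypothesis is used. This is the main obstacle of the proof.

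Part~1 then follows by applying the lemma in both directions. If $\phi$ is not $\mathbf{K}$-valid, the FMP of $\mathbf{K}$ supplies a finite (hence finitely branching) classical countermodel, which becomes a $\KGsquare$ countermodel via $v_1(p)\coloneqq v^\mathsf{cl}(p)$ and $v_2(p)\coloneqq 1-v^\mathsf{cl}(p)$; the lemma then yields $v_2(\phi,w)=1$, so $v_1(\neg{\sim}\phi,w)=0$. Conversely, any falsifying $\KGsquare$ instance on a finitely branching crisp $\mathfrak{F}$ gives $v_2(\phi,w)=1$, whence the lemma produces a classical $\mathbf{K}$-countermodel to $\phi$ on the same frame.

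For Part~2, I argue by contraposition using the $\mathbf{K}$-valid formula $\psi\coloneqq\Box(p\vee(p\rightarrow\mathbf{0}))$ from the fragment; it plays the role of $\Box(p\vee{\sim}p)$ in Proposition~\ref{prop:glivenkocounterexamplefb}, since ${\sim}p$ and $p\rightarrow\mathbf{0}$ agree on both $v_1$ and $v_2$. Suppose some $\mathfrak{F}\in\mathbb{F}$ has $R(w_0)$ infinite and enumerate $\{w_1,w_2,\ldots\}\subseteq R(w_0)$. Setting $v_2(p,w_n)\coloneqq 1-\tfrac{1}{n}$ (and arbitrarily elsewhere) gives $v_2(p\rightarrow\mathbf{0},w_n)=1\coimplies_\mathsf{G}(1-\tfrac{1}{n})=1$, so $v_2(p\vee(p\rightarrow\mathbf{0}),w_n)=1-\tfrac{1}{n}$, and therefore $v_2(\Box(p\vee(p\rightarrow\mathbf{0})),w_0)=\sup_n(1-\tfrac{1}{n})=1$, forcing $v_1(\neg{\sim}\psi,w_0)=0$. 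Hence $\neg{\sim}\psi\notin\mathsf{L}$ despite $\mathbb{F}\models_{\mathbf{K}}\psi$, contradicting the assumed containment.
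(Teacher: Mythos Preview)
Your proposal is correct and follows essentially the same approach as the paper: reduce Part~1 to a ``classicalisation'' lemma in the spirit of Lemma~\ref{lemma:0isclassical} (the paper cites that lemma together with Proposition~\ref{prop:+isenough} rather than stating the $v_2$-analogue explicitly, so your write-up is actually more transparent on this point), and for Part~2 exhibit $\Box(p\vee{\sim}p)$ as the witness on an infinitely branching frame. Your use of $p\rightarrow\mathbf{0}$ in place of ${\sim}p$ to stay literally inside the declared fragment, and your decision to track only $v_2$ via Proposition~\ref{prop:+isenough} rather than both coordinates, are cosmetic differences; the paper sets $v(p,w_i)=\left(\tfrac{1}{i+1},1-\tfrac{1}{i}\right)$ and computes both $v_1$ and $v_2$, but the argument is the same.
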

\begin{proof}
Consider 1. It is clear that no classically valid $\phi$ can have $e_1(\phi,w)=0$, nor $e_2(\phi,w)=1$. Otherwise, by Lemma~\ref{lemma:0isclassical} and Proposition~\ref{prop:+isenough}, there is a classical valuation $e^\mathsf{cl}$ s.t. $e^\mathsf{cl}(\phi,w)=(0,1)$. Thus, $e(\neg{\sim}\phi,w)=(1,0)$, as required.

For 2, assume that $\mathbb{F}$ contains an infinitely branching frame $\mathfrak{F}$. Let now $R(w)$ be infinite for some $w\in\mathfrak{F}$ and $\{w_i:i\!\geq\!1,i\!\in\!\mathbb{N}\}\subseteq R(w)$. We set $e(p,w_i)\!=\!\left(\frac{1}{i+1},1-\frac{1}{i}\right)$. It is easy to see that $e(\Box(p\!\vee\!{\sim}p),w)\!=\!(0,1)$, whence $e(\neg{\sim}\Box(p\!\vee\!{\sim}p),w)\!=\!(0,1)$, and thus, $\{\neg{\sim}\phi\!:\!\mathbb{F}\models_{\mathbf{K}}\phi\}\not\subseteq\mathsf{L}$.
\end{proof}
\section{Decidability and complexity\label{sec:complexity}}
In this section, we establish that, as expected, the satisfiability and validity\footnote{Satisfiability and falsifiability (non-validity) are reducible to each other using $\triangle$: $\phi$ is satisfiable (falsifiable) iff ${\sim}\triangle\phi$ is falsifiable (satisfiable).} of $\KbiG$ and $\KGsquare$ are $\mathsf{PSPACE}$ complete. We apply the approach proposed in~\cite{CaicedoMetcalfeRodriguezRogger2013,CaicedoMetcalfeRodriguezRogger2017}.

The next definition is a straightforward adaptation of~\cite{CaicedoMetcalfeRodriguezRogger2013} to $\KbiG$.
\begin{definition}[$\mathsf{F}$-models of $\KbiG$]
An $\mathsf{F}$-model is a tuple $\mathfrak{M}=\langle W,R,T,e\rangle$ with $\langle W,R,e\rangle$ being a~$\KbiG$ model and $T:W\rightarrow\mathcal{P}_{<\omega}([0,1])$ be s.t.\ $\{0,1\}\subseteq T(w)$ for all $w\in W$. $e$ is extended to the complex formulas as in $\KbiG$ in the cases of propositional connectives, and in the modal cases, as follows.
\begin{align*}
e(\Box\phi,w)&=\max\{x\in T(w):x\leq\inf\{e(\phi,w'):wRw'\}\}\\
e(\lozenge\phi,w)&=\min\{x\in T(w):x\geq\sup\{e(\phi,w'):wRw'\}\}
\end{align*}
\end{definition}
\begin{example}[A finite $\mathsf{F}$-model]
Recall that there are no finite $\KbiG$ countermodels for $\phi=\triangle\lozenge p\rightarrow\lozenge\triangle p$. It is, however, easy to provide a finite $\mathsf{F}$-model of $\phi$ (cf.~Fig.~\ref{fig:Fmodelexample}). Indeed, it is clear that $e(\phi,w)=0$.
\begin{figure}
\centering
\[\xymatrix{w:\ar[r]&w':p=\frac{1}{2}}\]
\caption{$T(w)=\{0,1\}$, $T(w')$ can be arbitrary.}
\label{fig:Fmodelexample}
\end{figure}
One sees that $e(p,w')=\frac{1}{2}$, whence $\inf\{e(p,w'):wRw'\}=\frac{1}{2}$ as well. But then the minimal $T(w)$ that is at least as great as $\frac{1}{2}$ is $1$. Thus, $e(\triangle\lozenge p,w)=1$. On the other hand, $e(\triangle p,w')=0$, whence, $e(\lozenge\triangle p,w)=0$.
\end{example}

The next lemma is a straightforward extension of~\cite[Theorem~1]{CaicedoMetcalfeRodriguezRogger2013} to $\KbiG$. The proof is essentially the same since we add only $\triangle$ to the language.
\begin{lemma}\label{lemma:FFMP}
$\phi$ is $\KbiG$ valid iff $\phi$ is true in all $\mathsf{F}$-models iff $\phi$ is true in all $\mathsf{F}$-models whose depth is $O(|\phi|)$ s.t.\ $|W|\leq(|\phi|+2)^{|\phi|}$ and $|T(w)|\leq|\phi|+2$ for all $w\in W$.
\end{lemma}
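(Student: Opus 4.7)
The plan is to mimic the argument of \cite[Theorem~1]{CaicedoMetcalfeRodriguezRogger2013}, noting that the new connective $\triangle$ is propositional and always takes values in $\{0,1\}\subseteq T(w)$, so it integrates transparently into both the $\mathsf{F}$-semantics and the size bookkeeping.

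First I would establish the equivalence of $\KbiG$-validity with validity in every $\mathsf{F}$-model. For the $(\Leftarrow)$ direction in contrapositive form, given a $\KbiG$ countermodel $\mathfrak{M}=\langle W,R,v\rangle$ with $v(\phi,w_0)<1$, set $T(w)\coloneqq\{v(\psi,w):\psi\in\Sfconst(\phi)\}\cup\{0,1\}$ at each world $w$. An induction on $\Sfconst(\phi)$ shows that the $\mathsf{F}$-valuation agrees with $v$ on $\Sfconst(\phi)$: the infima and suprema in the $\Box/\lozenge$ clauses lie in $T(w)$ by construction so the max/min snapping is the identity, and the $\triangle$-case is immediate because $\{0,1\}\subseteq T(w)$. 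For $(\Rightarrow)$ in contrapositive form, an $\mathsf{F}$-model countermodel is unfolded to a $\KbiG$ countermodel by adjoining, to each $w$, additional accessible worlds whose valuations on fresh propositional atoms realize exactly the infima and suprema prescribed by the $\mathsf{F}$-clauses for each subformula of $\phi$, chosen so as not to disturb the values of other subformulas of $\phi$ at existing worlds.

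Next I would obtain the bounded form. The bound $|T(w)|\le|\phi|+2$ is immediate from the construction above, since $|\Sfconst(\phi)|\le|\phi|$. For the depth and cardinality bounds on $W$, I would first tree-unravel the $\KbiG$ countermodel to modal depth at most $|\phi|$, and then, at each level, collapse worlds that induce the same truth value on every subformula of $\phi$. Since there are at most $|\phi|$ subformulas, each taking at most $|\phi|+2$ values, the number of distinct valuations per level is bounded by $(|\phi|+2)^{|\phi|}$.

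The main obstacle is the $(\Rightarrow)$ direction of the first step: realizing prescribed $T(w)$-snapped values as genuine infima and suprema without perturbing the values of other subformulas of $\phi$ at pre-existing worlds. This is handled exactly by the fresh-witness construction of \cite{CaicedoMetcalfeRodriguezRogger2013}; the presence of $\triangle$ introduces no new difficulty, because the value of $\triangle\psi$ depends only on whether $\psi$ evaluates to $1$, and this is already tracked by the inclusion of $1$ in every $T(w)$.
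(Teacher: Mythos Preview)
Your proposal is correct and follows precisely the route the paper indicates: both defer to \cite[Theorem~1]{CaicedoMetcalfeRodriguezRogger2013} and observe that the only new connective, the propositional $\triangle$ with values in $\{0,1\}\subseteq T(w)$, slots into the $\mathsf{F}$-model machinery and the size bookkeeping without modification. The paper gives no further detail than this one-line remark, so your sketch is in fact more explicit than the original; just note that in the $(\Rightarrow)$ direction you mean setting the values of the \emph{existing} atoms of $\phi$ at the \emph{fresh worlds} (not ``fresh atoms''), and that your per-level collapsing yields $O(|\phi|)\cdot(|\phi|+2)^{|\phi|}$ worlds rather than $(|\phi|+2)^{|\phi|}$, which is harmless for the intended $\mathsf{PSPACE}$ application.
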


It is now clear that $\KbiG$ (and hence $\KGsquare$) are decidable. To establish their complexity, we can utilise the algorithm described in~\cite{CaicedoMetcalfeRodriguezRogger2017}. The algorithm will work for $\KbiG$ since its only difference from $\KG^c$ is $\triangle$ which is an extensional connective. Another alternative would be to expand the tableaux calculus for $\KG^c$ from~\cite{Rogger2016phd} with the rules for $\triangle$ and use it to construct the decision procedure. The following statement is now immediate.
\begin{theorem}\label{theorem:KbiGpspacecomplete}
The satisfiability of $\KbiG$ (and hence, $\KGsquare$) is $\mathsf{PSPACE}$-complete.
\end{theorem}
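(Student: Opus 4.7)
The plan is to establish $\mathsf{PSPACE}$-hardness and $\mathsf{PSPACE}$-membership separately for $\KbiG$, then transfer both to $\KGsquare$ using the NNF reduction of proposition~\ref{prop:+translation}. For hardness, the cleanest route is via conservativity (proposition~\ref{prop:conservativity}): the satisfiability problem of the crisp G\"{o}del modal logic $\KG^c$ is exactly $\KbiG$-satisfiability restricted to the $\triangle$-free fragment, and $\KG^c$-satisfiability is already known to be $\mathsf{PSPACE}$-hard by \cite{CaicedoMetcalfeRodriguezRogger2017}.

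For $\mathsf{PSPACE}$-membership, the plan is to run a depth-first, non-deterministic satisfiability check built on the tableau calculus $\mathcal{T}(\KbiG)$, reusing space across sibling successor subtrees in the spirit of the algorithm for classical $\mathbf{K}$ in~\cite[\S6.7]{BlackburndeRijkeVenema2010} and its G\"{o}del-style adaptation in~\cite{CaicedoMetcalfeRodriguezRogger2017,BilkovaFrittellaKozhemiachenko2022IJCAR}. The driving facts are the bounds of lemma~\ref{lemma:FFMP}: the modal depth of the canonical $\mathsf{F}$-model tree is $O(|\phi|)$, $|T(w)|\leq|\phi|+2$, and although $|\mathsf{V}|$ is exponential, every value in $\mathsf{V}$ has a polynomial-size name (at most $O(|\phi|\log|\phi|)$ bits). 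At each recursion level (one per world on the current path) the procedure non-deterministically guesses values $\mathsf{v}_\psi\in\mathsf{V}$ for every subformula $\psi$ of the local goal, verifies all propositional tableau constraints from figure~\ref{fig:propositionalrules}, and for each modal labelled formula $w\!:\!\Box\phi_i\!=\!\mathsf{v}_i$ or $w\!:\!\lozenge\phi_j\!=\!\mathsf{v}_j$ launches a recursive call on the witness successor mandated by $\Box^+$ or $\lozenge^+$, carrying along all upper and lower bounds imposed on that successor by $\Box^\mathsf{R},\lozenge^\mathsf{R},\Box 1,\lozenge 0$. Because the witnesses of a given world are processed sequentially, the same stack frame suffices for all sibling successors, so the simultaneously stored state is bounded by (polynomial per level) $\times$ (linear recursion depth), i.e.\ polynomial in $|\phi|$. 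Since $\mathsf{NPSPACE}=\mathsf{PSPACE}$ by Savitch, the non-determinism is absorbed.

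The main obstacle I expect is the bookkeeping needed to propagate the $\Box$- and $\lozenge$-bounds correctly to each fresh witness: when descending into a successor one must simultaneously respect all upper bounds from $\Box$-formulas and all lower bounds from $\lozenge$-formulas at the parent, and remain consistent with the local value guesses at every subformula. Soundness and completeness of $\mathcal{T}(\KbiG)$ already package this into the rules $\Box^+,\lozenge^+,\Box^\mathsf{R},\lozenge^\mathsf{R}$; the remaining work is algorithmic, namely checking that the propagated obligations admit a polynomial description at each recursive step and that the propositional G\"{o}del satisfiability check (itself in $\mathsf{NP}\subseteq\mathsf{PSPACE}$) can be interleaved with witness enumeration without space blow-up. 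Once these details are handled, the algorithm matches the hardness lower bound, giving $\mathsf{PSPACE}$-completeness for $\KbiG$ and, by proposition~\ref{prop:+translation}, for $\KGsquare$.
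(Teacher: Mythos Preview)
Your proposal is correct and follows essentially the same route as the paper: $\mathsf{PSPACE}$-membership via a depth-first, space-reusing nondeterministic tableau search based on $\mathcal{T}(\KbiG)$ and the bounds of lemma~\ref{lemma:FFMP}, and hardness via a sublogic already known to be $\mathsf{PSPACE}$-hard. The only cosmetic difference is that the paper invokes the embedding of classical $\mathbf{K}$ into $\KbiG$ for hardness rather than going through $\KG^c$, and it does not spell out Savitch's theorem or the bit-size of value names as you do---but the underlying algorithm and its space analysis coincide.
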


\section{Conclusion\label{sec:conclusion}}
In this paper, we axiomatised crisp modal expansions of the bi-G\"{o}del logic and the paraconsistent G\"{o}del logic $\Gsquare$ in the bi-modal language with $\triangle$. We also established their complexity and investigated their semantical properties. Namely, we showed that (among others) Sahlqvist formulas and implications of monotone formulas define the same classes of frames in $\mathbf{K}$, $\KbiG$, and $\KGsquare$. Moreover, we established that Glivenko's theorem holds in the $\KbiG$ ($\KGsquare$) logic of a class of frames $\mathbb{F}$ iff $\mathbb{F}$ contains only finitely branching frames.

In future work, we plan to further investigate modal logics arising from $\biG$ and $\Gsquare$. First of all, we plan to axiomatise fuzzy versions of $\KbiG$ and $\KGsquare$. And while the axiomatisation of $\KbiG^\mathsf{f}$ may happen to be relatively straightforward, this seems to be not the case with ${\KGsquare}^\mathsf{f}$. Indeed, recall the proof of Proposition~\ref{prop:KGsquareredundancies}. There, by means of $\KGsquare$, we reduced the $\triangle\Box$ definition of crisp frames to the formula valid in all frames. This means that the standard definitions of $\Box$ and $\lozenge$ in ${\KGsquare}^\mathsf{f}$ will produce the logic that \emph{does not} extend $\KbiG^\mathsf{f}$. We leave its axiomatisation for the future research.

Secondly, as in $\KGsquare$, we treat truth and falsity of statements independently, it makes sense to have not one but two accessibility relations on a~frame: $R^+$ and $R^-$ that designate the degree of trust the agent puts in the assertions and denials given by sources. This also makes sense in the analysis of statistical evidence: assume that $w$ is some test that gives many false positives but almost no false negatives, while $w'$ is another test that gives few false positives and many false negatives. Thus, one would tend to believe in positive results provided by $w$ less in the ones provided by $w'$ and vice versa for the negative results.

Third direction of further research would be to devise description logics expanding $\Gsquare$. Description G\"{o}del logics~\cite{BobilloDelgadoGomez-RamiroStraccia2009,BobilloDelgadoGomez-RamiroStraccia2012,BorgwardtDistelPenaloza2014} are useful in the representation of vague or uncertain data which is not possible in the classical ontologies. In fact~\cite{BaaderPenaloza2011,BorgwardtPenaloza2012}, G\"{o}del description logics are the only\footnote{Note that although generalised concept inclusion of \L{}ukasiewicz description logics, and hence, its global entailment is undecidable, the local entailment still is~\cite{Vidal2021}.} decidable fuzzy description logics. On the other hand, there is a considerable amount of work done on paraconsistent description logics, i.e., logics whose underlying propositional fragment is paraconsistent. For example, description logics expanding $\mathsf{N4}$ are presented in~\cite{OdintsovWansing2003}, description logics over Belnap--Dunn logics are studied in~\cite{MaHitzlerLin2007}, and the ones over Priest's logic of paradox in~\cite{ZhangLinWang2010}. Still, to the best of our knowledge, there is no work done on paraconsistent fuzzy logics which would enable one to non-trivially reason with data which is both vague and contradictory.
\bibliographystyle{plain}
\bibliography{references.bib}
\end{document}